\theoremstyle{remark}
\newtheorem{remark}{Remark}
\newenvironment{proof}{{\bfseries Proof.}}{}
\newcommand{\lm}[1]{{\color{black}{#1}}} %colors
\definecolor{darkseagreen}{rgb}{0.56, 0.74, 0.56}
\definecolor{darkspringgreen}{rgb}{0.09, 0.45, 0.27}
\newcommand{\Vertiii}[1]{{\left\vert\kern-0.25ex\left\vert\kern-0.25ex\left\vert #1 \right\vert\kern-0.25ex\right\vert\kern-0.25ex\right\vert}}
\newcommand{\E}{K}
\newcommand{\F}{F}
\renewcommand{\e}{e}
\newcommand{\taun}{\Omega_h}
\newcommand{\h}{h}
\newcommand{\hE}{\h_\E}
\newcommand{\hF}{\h_\F}
\newcommand{\he}{\h_\e}
\newcommand{\Ebold}{\mathbf E}
\newcommand{\Eboldt}{\Ebold_t}
\newcommand{\Bbold}{\mathbf B}
\newcommand{\Bboldt}{\Bbold_t}
\newcommand{\xbold}{\mathbf x}
\newcommand{\xboldE}{\xbold_\E}
\newcommand{\xboldF}{\xbold_\F}
\newcommand{\bboldF}{\mathbf b_\F}
\newcommand{\bboldE}{\mathbf b_\E}
\newcommand{\ubold}{\mathbf u}
\newcommand{\vbold}{\mathbf v}
\newcommand{\vboldI}{\vbold_I}
\newcommand{\wbold}{\mathbf w}
\newcommand{\Jbold}{\mathbf J}
\newcommand{\JboldI}{\Jbold_I}
\newcommand{\Eboldht}{\Ebold_{\h,t}}
\newcommand{\Eboldh}{\Ebold_\h}
\newcommand{\Eboldhm}{\Eboldh^m}
\newcommand{\Bboldht}{\Bbold_{\h,t}}
\newcommand{\Bboldh}{\Bbold_\h}
\newcommand{\Bboldhm}{\Bboldh^m}
\newcommand{\uboldh}{\ubold_\h}
\newcommand{\vboldh}{\vbold_\h}
\newcommand{\wboldh}{\wbold_\h}
\newcommand{\Vboldface}{\mathbf V^{\FACE}}
\newcommand{\Vboldedge}{\mathbf V^{\EDGE}}
\newcommand{\VboldfaceE}{\Vboldface(\E)}
\newcommand{\VboldedgeE}{\Vboldedge(\E)}
\newcommand{\Vboldfaceh}{\Vboldface_\h}
\newcommand{\VboldfacehE}{\Vboldfaceh(\E)}
\newcommand{\Vtildeboldfaceh}{\mathbf{\widetilde V}_\h^{\FACE}}
\newcommand{\Vtildeboldedgeh}{\mathbf{\widetilde V}_\h^{\EDGE}}
\newcommand{\Vboldedgeh}{\Vboldedge_\h}
\newcommand{\VboldedgehE}{\Vboldedgeh(\E)}
\newcommand{\VboldedgehF}{\Vboldedgeh(\F)}
\DeclareMathOperator{\curlbold}{\mathbf{curl}}
\DeclareMathOperator{\curlboldF}{\mathbf{curl}_\F}
\newcommand{\Piboldh}{\boldsymbol \Pi_\h^\text{curl}}
\newcommand{\Pizboldh}{\boldsymbol \Pi^0_\h}
\newcommand{\Pizh}{\Pi^0_\h}
\newcommand{\Pboldh}{\mathbf P_\h}
\newcommand{\sh}{s_\h}
\newcommand{\Pbb}{\mathbb P}
\newcommand{\Rbb}{\mathbb R}
\let\div\relax
\DeclareMathOperator{\div}{div}
\DeclareMathOperator{\divF}{div_\F}
\DeclareMathOperator{\rotF}{rot_\F}
\newcommand{\eboldh}{\mathbf e_\h}
\newcommand{\bboldh}{\mathbf b_\h}
\newcommand{\eboldhm}{\eboldh^m}
\newcommand{\bboldhm}{\bboldh^m}
\newcommand{\eboldhmpo}{\eboldh^{m+1}}
\newcommand{\bboldhmpo}{\bboldh^{m+1}}
\newcommand{\eboldht}{\mathbf e_{\h,t}}
\newcommand{\bboldht}{\mathbf b_{\h,t}}
\DeclareMathOperator{\EDGE}{edge}
\DeclareMathOperator{\FACE}{face}
\DeclareMathOperator{\NODE}{node}
\newcommand{\Vnode}{V^{\NODE}}
\newcommand{\VnodehE}{\Vnode_\h(\E)}
\newcommand{\VnodehF}{\Vnode_\h(\F)}
\newcommand{\Vnodeh}{\Vnode_\h}
\newcommand{\EboldI}{\Ebold_{\mathbf I}}
\newcommand{\cbold}{\mathbf c}
\newcommand{\varepsilonhat}{\widehat \varepsilon}
\newcommand{\muhat}{\widehat\mu}
\newcommand{\sigmahat}{\widehat\sigma}
\newcommand{\vh}{v_\h}
\newcommand{\Ccal}{\mathcal C}
\newcommand{\EE}{\mathcal E^\E}
\newcommand{\EF}{\mathcal E^\F}
\newcommand{\FE}{\mathcal F^\E}
\newcommand{\Fcaln}{\Fcalh}
\newcommand{\Fcalh}{\mathcal F_\h}
\newcommand{\Ecaln}{\Ecalh}
\newcommand{\Ecalh}{\mathcal E_\h}
\newcommand{\psiboldh}{\boldsymbol \psi_\h}
\newcommand{\psiboldI}{\boldsymbol{\psi}_I}
\newcommand{\phiboldh}{\boldsymbol \phi_\h}
\newcommand{\tbold}{\mathbf t}
\newcommand{\tbolde}{\tbold_\e}
\newcommand{\nbold}{\mathbf n}
\newcommand{\nboldE}{\nbold_\E}
\newcommand{\nboldF}{\nbold_\F}
\newcommand{\pbold}{\mathbf p}
\newcommand{\SE}{S^\E}
\newcommand{\SEedge}{\SE_{\EDGE}}
\newcommand{\SEface}{\SE_{\FACE}}
\newcommand{\Ibold}{\mathbf I}
\newcommand{\nboldOmega}{\nbold_\Omega}
\newcommand{\Acal}{\mathcal A}
\newcommand{\Dcal}{\mathcal D}
\newcommand{\Hcal}{\mathcal H}
\newcommand{\XboldN}{\mathbf {X_N}}
\newcommand{\XboldT}{\mathbf {X_T}}
\newcommand{\ts}[1]{t^{#1}} % versione nuova
\newcommand{\tm}{\ts{m}}
\newcommand{\tmpo}{\ts{m+1}}
\newcommand{\omegam}{\omega^m}
\newcommand{\trcurl}{\text{tr}_{\textbf{curl}}}
\newcommand{\trdiv}{\text{tr}_{\text{div}}}
\newcommand{\D}{D}
\newcommand{\EboldzI}{\Ebold_{0,I}}
\newcommand{\BboldzI}{\Bbold_{0,I}}
\newcommand{\BboldI}{\Bbold_{\mathbf I}}
\newcommand{\vI}{v_I}
\newcommand{\zbold}{\mathbf z}
\newcommand{\Zboldh}{\mathbf Z_h}
\newcommand{\ph}{p_h}
\newcommand{\psibold}{{\bm\psi}}
\newcommand{\phibold}{{\bm\phi}}
\newcommand{\restrict}[2]{{#1}_{|#2}}
\newcommand{\EOD}{

\definecolor{MyDarkGreen}{rgb}{0,0.45,0}

\let\comment\undefined
\usepackage{changes}
\definechangesauthor[color=red,         name={Franco}]{FD}
\definechangesauthor[color=blue,        name={LorenzoM}]{LM}
\definechangesauthor[color=MyDarkGreen, name={Marco}]{gm}
\definechangesauthor[color=brown,       name={LourenzoB}]{LBdV}

% \usepackage{changes}
% \definechangesauthor[name={Marco}, color=MyDarkGreen]{gm}

% ----------------------------
% start document (frontmatter)
% ----------------------------

\begin{document}

\begin{frontmatter}
  
\title{Virtual elements for Maxwell's equations}

  \author[BICA] {L.~Beir\~ao~da~Veiga}
  \author[BICB] {, F.~Dassi}
  \author[IMATI]{, G.~Manzini}
  \author[WIEN] {, and L.~Mascotto}

  \address[BICA]{Dip. di Matematica e Applicazioni, Universit\`a degli
    Studi di Milano-Bicocca, Italy (lourenco.beirao@unimib.it)}
  
  \address[BICB]{Dip. di Matematica e Applicazioni, Universit\`a degli
    Studi di Milano-Bicocca, Italy (franco.dassi@unimib.it)}
  
  \address[IMATI]{Istituto di Matematica Applicata e Tecnologie Informatiche, CNR, Pavia, Italy (marco.manzini@imati.cnr.it)}
  
  \address[WIEN]{Fakult\"at f\"ur Mathematik, Universit\"at Wien,
    Austria (lorenzo.mascotto@univie.ac.at)}
  
  % ----------------------------
  % Abstract
  % ----------------------------
  \begin{abstract}
We present a low order virtual element discretization
for time dependent Maxwell's equations, which allow for the use of general polyhedral meshes. 
Both the semi- and fully-discrete schemes are considered.
We derive optimal a priori estimates and validate them on a set of numerical experiments.
As pivot results, we discuss some novel inequalities for de~Rahm sequences of nodal, edge, and face virtual element spaces.
\\
\noindent
\textbf{AMS subject classification:} 65N12; 65N15
\end{abstract}
  
\begin{keyword}
polyhedral meshes; virtual element method; Maxwell's equations
\end{keyword}

\end{frontmatter}

% ----------------------------
% Preliminary commands
% ----------------------------

\raggedbottom
\setcounter{secnumdepth}{4}
\setcounter{tocdepth}{4}

% ----------------------------
% Paper
% ----------------------------

%%%%%%%%%%%%%%%%%%%%%%%%%%%%%%%%%%%%%%%%%%%%%%%%%%%%%%%%%%%%%%%%%%%%%%%%%%%
\section{Introduction}
\label{section:introduction}
%%%%%%%%%%%%%%%%%%%%%%%%%%%%%%%%%%%%%%%%%%%%%%%%%%%%%%%%%%%%%%%%%%%%%%%%%%%

The Virtual Element Method (VEM) was introduced in \cite{VEMvolley} as a generalization of the Finite Element Method (FEM)
that allows for the use of general polygonal and polyhedral meshes.
Since its introduction, the VEM has shared a wide success in the numerical analysis and engineering communities.
After the introduction of~$H^1$ conforming spaces in~\cite{VEMvolley,equivalentprojectorsforVEM,VEM3Dbasic},
also $H({\rm div})$ and $H({\rm curl})$ conforming spaces in both two and three space dimensions were proposed.
Mixed finite elements for the diffusion problem in mixed form in 2D were introduced in \cite{Brezzi-Falk-Marini,BBMR_generalsecondorder},
while in~\cite{HdivHcurlVEM, da2017virtual, da2018lowest, da2018family}
various families of discrete exact VEM complexes  of $H^1-H({\rm div})-H({\rm curl})-L^2$ type were introduced in 2D and 3D. 
In the above contributions, all such families of spaces are applied to the Kikuchi formulation of the magnetostatic equations, 
used as a simple model problem to showcase the proposed discrete construction.
A recent application for permanent magnet simulations can be found in~\cite{Dassi-DiBarba-Russo}.

On the other hand, finite elements have been widely used for numerical modelling of Maxwell's equations,
a very short representative list being~\cite{Jin:2014, Bermudez-book, 
monk2003finite, Monk1991, Nedelec1980, Monk-Makridakis, zhao2004analysis, Ciarlet-Zou, Assous-Maxwell}.
Important applications involve, for instance, the analysis and design of microwave devices~\cite{Coccioli-Itoh-Pelosi-Silvester:1996},
cavity resonators~\cite{Lee-Wilkins-Mitra:1993,Tierens-DeZutter:2011,Rui-Hu-Liu:2010},
coaxial cables and waveguides~\cite{Wilkins-Lee-Mittra:1991},
antennas and high-power amplifiers~\cite{Teixeira-Bergmann:1997-proc,Teixeira-Bergmann:1997-journal,Greenwood-Jin:1999,Jin-Riley:2009},
electromagnetic scattering~\cite{Khebir-DAngelo-Joseph:1993,MedgyesiMitschang-Putnam:1984}.

Due to the complex geometries that are often faced in many applicative areas of electromagnetism, 
the additional flexibility of general polytopal grids is an important asset, not only in generating an efficient mesh to partition
the domain of interest, but also in handling/gluing/adapting existing meshes. Among the other polytopal technologies, in the realm of electromagnetism it is possible to find (in a nonexhaustive list) polygonal finite elements~\cite{Euler-Schuhmann-Weiland:2006}, mimetic finite differences \cite{MFD-3D-Maxwell}, hybrid high-order methods~\cite{chave2020three}, and discrete exact sequences~\cite{DeRahm-skeletal}.

The aim of the present paper is to use the discrete spaces introduced in \cite{da2018lowest} to develop a virtual element
discretization of the full time-dependent Maxwell's equations.
In order to ease the reader's understanding, we restrict the presentation and analysis to the lowest order case; the generalization of the scheme and the analysis to the general order case, see, e.g., \cite{da2018family}, would follow the same steps.
\paragraph*{Structure of the paper.}
After introducing several Sobolev spaces at the end of this introduction,
we present the model problem in Section~\ref{section:continuous}.
The virtual element schemes for the semi- and fully discrete Maxwell's equations are detailed in Section~\ref{section:VEM};
here, we also address the approximation properties in virtual element and polynomial spaces,
as well as the design of suitable stabilization terms.
We develop convergence estimates for the semi-discrete and the fully discrete cases in the spirit of~\cite{zhao2004analysis},
the latter restricted to the backward Euler case, in Sections~\ref{section:semi-discrete} and~\ref{section:fully-discrete}.
The error estimates show the optimal behaviour of the proposed method.
In order to investigate the practical performance of the scheme, we develop a set of academic numerical tests in Section~\ref{section:numerical:results}.
Eventually, we state some conclusions in Section~\ref{section:conclusions}.
\paragraph*{Notation and functional spaces.}

We employ the standard definitions and notation for Hilbert and
Sobolev spaces~\cite{adamsfournier}.
Given~$s\in\Rbb$ and a Lipschitz domain~$\D$, we denote the Hilbert
space of order~$s$ by~$H^s(\D)$.
We endow~$H^s(\D)$ with the standard inner product, norm, and
seminorms, which we indicate as $(\cdot,\cdot)_{s,\D}$,
$\vert\cdot\vert_{s,\D}$, and $\Vert\cdot\Vert_{s,\D}$.
The special case~$s=0$ consists of the Lebesgue space~$L^2(\D)$ of
real-valued, square integrable functions defined on $\D$.
We define Sobolev spaces of noninteger order by interpolation and Sobolev spaces of negative order by duality.
We analogously consider Sobolev spaces~$H^s(\partial \D)$ on the boundary
$\partial\D$ of~$\D$.

We recall the definition of some differential operators that we shall use in the paper.
Let $\partial_x$, $\partial_y$, and $\partial_z$ denote the partial
derivative along $x$, $y$, and $z$.
Given a two-dimensional vector-valued field~$\vbold=(v_1,v_2):\F\subseteq\Rbb^2\rightarrow\Rbb^2$
and a scalar field~$v:\F\subseteq\Rbb^2\rightarrow\Rbb$, we consider
\begin{align*}
  \divF\vbold := \partial_x v_1 + \partial_y v_2,
  \quad\quad
  \rotF\vbold := \partial_y v_1 - \partial_x  v_2,
  \quad\quad
  \curlboldF v := \big(\partial_y v, -\partial _x v\big)^T.
\end{align*}
In turn, given a three-dimensional vector-valued field~$\vbold=(v_1,v_2,v_3):\E\subseteq\Rbb^3\rightarrow\Rbb^3$, we consider
\begin{align*}
  \div\vbold := \partial_x v_1 + \partial_y v_2 + \partial_z v_3,
  \quad\quad
  \curlbold\vbold := \big(
  \partial_y v_3 - \partial_z v_2, \partial _z v_1 - \partial_x v_3, \partial _x v_2 - \partial_y v_1
  \big)^T.
\end{align*}
For Lipschitz domains~$\D\subset\mathbb R^3$, we introduce the Sobolev~$\curlbold$ and~$\div$ spaces of order~$s>0$
\begin{align*}
  H^s(\curlbold,\D) &:= \left\{ \vbold \in [H^s(\D)]^3 \mid \curlbold \vbold \in [H^s(\D)]^3  \right\},\\[0.25em]
  H^s(\div,\D)      &:=  \left\{ \vbold \in [H^s(\D)]^3 \mid \div \vbold \in H^s(\D)          \right\}.
\end{align*}
If~$s=0$, we write~$H(\curlbold,\D)$ and~$H(\div,\D)$.
We denote the unit vector that is orthogonal to the
boundary~$\partial\D$ and pointing out of~$\D$ by~$\nbold_\D$.
Furthermore, we recall the existence of the two trace
operators~$\trcurl: H(\curlbold,\D)\rightarrow [H^{-\frac12}(\partial\D)]^2$ and~$\trdiv : H(\div,\D)\rightarrow
H^{-\frac12}(\partial\D)$ such
that~$\trcurl(\vbold)=\vbold\times\nbold_{\D}$
and~$\trdiv(\vbold)=\psibold\cdot\nbold_{\D}$ for
all~$\vbold \in H(\curlbold, \D)$ and~$\psibold\in H(\div,\D)$,
respectively; see, e.g., \cite[Section~3.5]{monk2003finite}.
According to the standard notation, $L^{\infty}(\D)$ is the Sobolev
space of functions that are bounded almost everywhere and
~$W^{1,\infty}(\D)$ the Sobolev space of functions in $L^{\infty}(\D)$
whose first weak derivatives are also in $L^{\infty}(\D)$.
We shall also consider~$\curlbold$- and~$\div$-spaces with zero
boundary conditions such as
\begin{align*}
  H_0(\curlbold,\D) &:= \left\{ \vbold\in H(\curlbold,\D) \mid \trcurl(\vbold)= 0 \text{~on~} \partial \D \right\},\\
  H_0(\div,\D)      &:= \left\{ \vbold\in H(\div,\D)      \mid \trdiv(\vbold) = 0 \text{~on~} \partial \D \right\}.
\end{align*}
Let~$X$ denote a scalar or vector Sobolev space of any order over the
domain~$\D\subset\Rbb^3$; $(a,b)$ an open, connected subset of $\Rbb$,
and $p$ a real number in the interval $[1,\infty]$.
The Bochner space~\cite{evansPDE}~$L^p((a,b),X)$ is the vector space
of functions~$\vbold$ with finite norm
\begin{align*}
  \Vert\vbold\Vert_{L^p((a,b), X)}:=
  \begin{cases}
    \left(\int_a^b \Vert \vbold(t) \Vert_{X}^p \text{dt}\right)^{\frac1p}     & \text{if } p\in [1,\infty)\\
      \text{essSup}_{(a,b)} \Vert \vbold(\cdot) \Vert_{X}    & \text{otherwise}.
  \end{cases}
\end{align*}
Finally, for any two positive quantities~$a$ and~$b$, we
write~$a\lesssim b$ and~$a\gtrsim b$ if there exists a positive
constant~$c$ such that $a\le c\,b$ and~$a\ge c\,b$, respectively.
We also write~$a\approx b$ if $a\lesssim b$ and~$b\lesssim a$.
We require the constant~$c$ to be independent of the discretization
parameters.
In the following proofs, the explanation of the identities and upper and lower bounds will appear either in the preceeding text
or as an equation reference above the equality symbol ``$=$'' or the inequality symbols ``$\leq$'', ``$\geq$'' etc,
whichever we believe it is easier for the reader.
% Hey Emacs, this is -*-latex-*-

%%%%%%%%%%%%%%%%%%%%%%%%%%%%%%%%%%%%%%%%%%%%%%%%%%%%%%%%%%%%%%%%%%%%%%%%%%%%%%
\section{The continuous problem}
\label{section:continuous}
%%%%%%%%%%%%%%%%%%%%%%%%%%%%%%%%%%%%%%%%%%%%%%%%%%%%%%%%%%%%%%%%%%%%%%%%%%%%%%

We consider the strong form of Maxwell's equations on a polyhedral domain~$\Omega\subset\Rbb^3$ with Lipschitz boundary~$\partial\Omega$:
\emph{Given the initial data~$\Ebold^0$ and~$\Bbold^0$, find the
electric field~$\Ebold$ and the magnetic induction field~$\Bbold$ such
that}
\begin{equation}
  \label{eq:Maxwell:strong} %% \label{strong:Maxwell}
  \begin{cases}
    \varepsilon\Eboldt + \sigma\Ebold - \curlbold(\mu^{-1}\Bbold)
    = \Jbold
    & \quad\text{in}~\Omega, \,\forall t\in[0,T],\\[0.5em]
    %% -----------------------------------------------------------
    \Bboldt + \curlbold \Ebold = \mathbf{0}
    & \quad\text{in}~\Omega, \,\forall t\in[0,T],\\[0.5em]
    %% -----------------------------------------------------------
    \Ebold(0) = \Ebold^0,\quad
    \Bbold(0) = \Bbold^0 \quad
    & \quad\text{in}~\Omega,\\[0.5em]
    %% -----------------------------------------------------------
    \Ebold\times\nboldOmega = 0,\quad
    \Bbold\cdot \nboldOmega = 0
    & \quad\text{on}~\partial\Omega,
  \end{cases}
\end{equation}
where the subscript $t$ denotes the first derivative in
time (so throughout the paper we use $q_t$ instead of $\partial
q\slash{\partial t}$ for a given time-dependent quantity $q(t)$).
Above, $\Jbold$, $\varepsilon$, $\sigma$, and~$\mu$ denote the electric current density that is externally applied to the system, the electric permittivity, the electric conductivity, and the magnetic permeability.
We consider homogeneous boundary conditions to ease the exposition, since
the nonhomogeneous boundary case presents  further complications.
We assume that the initial magnetic induction is a solenoidal field, i.e.,
\begin{equation}
  \div\Bbold^0 = 0.
  \label{eq:assumption:0div:initial-induction} % \label{assumption:0div:initial-induction}
\end{equation}
Taking the divergence of the second equation in~\eqref{eq:Maxwell:strong}, we readily deduce that
\begin{align}
  \div \Bbold(t) = 0
  \quad\quad\forall t\in[0,T].
  \label{eq:assumption:0div:all-times} % \label{assumption:0div:all-times}
\end{align}
The weak formulation of Maxwell's equations reads as follows:
\begin{equation}
  \begin{cases}
    \text{Find}~(\Ebold,\Bbold)\in H_0(\curlbold,\Omega)\times H_0(\div,\Omega) \text{~such~that}\\[0.5em]
    (\varepsilon\Eboldt + \sigma\Ebold,\wbold)_{0,\Omega} - (\mu^{-1}\Bbold,\curlbold\wbold)_{0,\Omega}
    = (\Jbold,\wbold)_{0,\Omega}
    &\quad\forall\wbold\in H_0(\curlbold,\Omega),\\[0.5em]
    (\mu^{-1}\Bboldt,\psibold)_{0,\Omega} + (\mu^{-1}\psibold,\curlbold\Ebold)_{0,\Omega}
    = 0
    &\quad\forall\psibold\in H_0(\div,\Omega).
  \end{cases}
  \label{eq:Maxwell:weak} % \label{Maxwell}
\end{equation}
In the sequel, we shall assume that there exist strictly positive constants $\sigma^\star, \varepsilon_\star, \varepsilon^\star, \mu_\star, \mu^\star$ such that, for all $x \in \Omega$, the 
material parameters satisfy
\begin{equation}\label{eq:param:bound}
0 \le \sigma(x) \le \sigma^\star \ , \quad
\varepsilon_\star \le \varepsilon \le \varepsilon^\star \ , \quad
\mu_\star \le \mu(x) \le \mu^\star \ .
\end{equation}
\paragraph*{On the regularity of the solutions to~\eqref{eq:Maxwell:weak}.}
Under suitable assumptions on the regularity of the data, problem~\eqref{eq:Maxwell:weak} admits a unique solution; see, e.g., \cite[Theorem~2.1]{zhao2004analysis} and the references therein.
We here recall sufficient conditions from~\cite{zhao2004analysis} leading to extra smoothness in space for the solutions to Maxwell's equations that will be needed in the following derivations.
To the aim, given~$(\ubold,\vbold) \in \Hcal:=[L^2(\Omega)]^3 \times [L^2(\Omega)]^3$,
we first introduce an operator~$\Acal$ with domain
\[
\Dcal(\Acal) = 
\left\{  
\begin{pmatrix} \ubold \\ \vbold \end{pmatrix} \in \Hcal \text{ such that }
\ubold \in H_0(\curlbold,\Omega) \text{ and } \curlbold(\mu^{-1} \vbold) \in [L^2(\Omega)]^3 \right\},
\]
where the operator~$\Acal$ is given by
\[
\Acal \begin{pmatrix} \ubold \\ \vbold \end{pmatrix}
= \begin{pmatrix} -\varepsilon^{-1} \curlbold(\mu^{-1}\vbold) + \varepsilon^{-1} \sigma \ubold \\ \curlbold \ubold \end{pmatrix}
\qquad
\forall \begin{pmatrix} \ubold \\ \vbold \end{pmatrix} \in \Dcal(\Acal).
\]
Introduce
\[
\XboldN(\Omega,\varepsilon):= \{ \ubold \in H_0(\curlbold,\Omega) \mid \varepsilon \ubold \in H(\div,\Omega) \},\qquad  \XboldT(\Omega,\mu):= \{ \ubold \in H(\curlbold,\Omega) \mid \mu \ubold \in H_0(\div,\Omega) \}.
\]
Let~$\Ebold$ and~$\Bbold$ be the solutions to~\eqref{eq:Maxwell:weak}.
Assume that~$\mu$ or~$\varepsilon$ is a constant function, $\sigma$, $\mu$, and~$\varepsilon$ are continuous, and $\sigma / \varepsilon \in W^{1,p}(\Omega)$ with $p > 3$.
Further assume
\[
\begin{split}
& \Jbold \in \mathcal C^3([0,T], [L^2(\Omega)]^3), \qquad
\div \Jbold \in C^2([0,T], [L^2(\Omega)]^3), \\
& (\Jbold(0),0)^T ,\; \Acal (\Jbold(0),0)^T + (\Jbold_t(0),0)^T \in
\XboldN(\Omega,\varepsilon) \times \mu \XboldT(\Omega,\mu).
\end{split}
\]
Then, as in~\cite[Theorem~$2.3$]{zhao2004analysis},
we have that $\Ebold(t)$, $\Bbold(t)$, $\Eboldt(t)$, $\curlbold \Ebold (t)$, and $\curlbold \Eboldt$ belong to $H^s(\Omega)$, for some~$s>1/2$, for all~$t \in [0,T]$.\

%%%%%%%%%%%%%%%%%%%%%%%%%%%%%%%%%%%%%%%%%%%%%%%%%%%%%%%%%%%%%%%%%%%%%%%%%%%
\section{The virtual element method} \label{section:VEM}
%%%%%%%%%%%%%%%%%%%%%%%%%%%%%%%%%%%%%%%%%%%%%%%%%%%%%%%%%%%%%%%%%%%%%%%%%%%

In this section, we construct the virtual element method for the variational formulation of Maxwell's equations~\eqref{eq:Maxwell:weak}
and discuss its main properties.
We formulate the VEM on sequences of polyhedral meshes, whose properties are discussed in Section~\ref{section:meshes}.
In Sections~\ref{subsection:nodal-spaces}--\ref{subsection:face-spaces},
we briefly review the definitions of the lowest-order nodal, edge and
face virtual element spaces and the design of computable discrete
bilinear forms.
In Section~\ref{subsection:exact-sequence}, we recall from~\cite{da2018lowest} that these spaces form an exact de~Rham sequence and review some related property.
The design of the virtual element spaces follows the guidelines
of~\cite{da2018lowest}; see also~\cite{HdivHcurlVEM, da2017virtual,
  da2018family}.
In Sections~\ref{subsection:semi-discrete}
and~\ref{subsection:fully-discrete}, we present the semi-discrete and
fully-discrete method.

%%%%%%%%%%%%%%%%%%%%%%%%%%%%%%%%%%%%%%%%%%%%%%%%%%%%%%%%%%%%%%%%%%%%%%%%%%%
\subsection{Polyhedral meshes and mesh assumptions} \label{section:meshes}
%%%%%%%%%%%%%%%%%%%%%%%%%%%%%%%%%%%%%%%%%%%%%%%%%%%%%%%%%%%%%%%%%%%%%%%%%%%
%%
Let
$\{ \taun \}_{\h}$ be a sequence of mesh partitionings of the computational domain~$\Omega$ labeled by the subscript~$h$,
which stands for the \emph{mesh size parameter}.
Every mesh~$\taun$ is a collection of open, bounded, simply connected polyhedral elements $\E$ such that $\overline{\Omega}=\bigcup_{\E\in\taun}\overline{\E}$.
The mesh elements are nonoverlapping in the sense that the intersection of any possible pair of
them can only be either the empty set, a set of common vertices, or a shared portion of their boundaries (which is a union of edges).
The mesh size parameter is defined as $h=\max_{\E\in\taun}\hE$, where
$\hE=\sup_{\xbold,\xbold'\in\E}|\xbold-\xbold'|$ is the
\emph{diameter} of $\E$.
Other characteristic lengths are the face diameters $\hF=\sup_{\xbold,\xbold'\in\F}|\xbold-\xbold'|$,
which are defined for any mesh face $F$, and the edge lengths~$\he$, which are defined for any mesh edge $\e$.
For all~$\h$, we denote the set of faces and edges by~$\Fcaln$ and~$\Ecaln$.
Moreover, we denote the set of faces of an element $\E\in\taun$ by
$\FE$ and the set of edges of a face~$F\in\Fcaln$ by $\EF$.
Consistently with our previous notation, $\nboldE$ is the unit vector
pointing out of element~$\E$, and $\bboldE$ and~$\bboldF$ are the
centroids of $\E$ and~$\F$.

Let~$\gamma >0$. A face~$\F$ is said to be \emph{$\gamma$-shape regular} if there
exists a a two-dimensional ball~$B$ with diameter $\h_B$ in the interior of~$\F$ such that $\h_B>\gamma\hF$.
Similarly, an element~$\E$ is said to be \emph{$\gamma$-shape regular}
if there exists a three-dimensional ball~$B$ with diameter $\h_B$ in the interior of~$\E$ such that $\h_B>\gamma\hE$.

\medskip
In the rest of the manuscript we assume that all the meshes~$\taun$ of a given sequence
$\{\taun\}$ satisfy these conditions uniformly: there exists a real constant factor~$\gamma \in (0,1)$ independent of $h$ such that
\begin{itemize}
\smallskip
\item all the elements~$\E\in\taun$ and faces~$\F\in\Fcaln$ are $\gamma$-shape regular;
\smallskip
\item $\gamma\hE \le \hF$ for every~$\F\in\FE$ of every element~$\E\in\taun$, and, analogously, $\gamma \hF \le \he$
for every edge~$\e\in\EF$ of every face~$\F\in\Fcaln$.
\end{itemize}
\medskip

We assume that the (scalar and real valued) problem coefficients~$\varepsilon$, $\sigma$,
and~$\mu$ in~\eqref{eq:Maxwell:strong} and~\eqref{eq:Maxwell:weak} are
piecewise continuous over~$\taun$.
As a consequence, we can approximate them by the three piecewise constant
functions~$\varepsilonhat$, $\sigmahat$, and~$\muhat$ given by, in every mesh element $\E\in\taun$,
\begin{equation} 
  \varepsilonhat_{\E} := \varepsilon(\bboldE),\quad
  \sigmahat_{\E}      := \sigma(\bboldE), \quad
  \muhat_{\E}         := \mu(\bboldE).
\end{equation}
To perform the analysis of the method, we also need the additional
regularity condition that, for every element $\E\in\taun$,
\begin{equation}
\restrict{\mu^{-1}}{\E},\,\,\, \restrict{\sigma}{\E},\,\, \restrict{\varepsilon}{\E} \in W^{1,\infty}(\E).
  \label{assumption:coefficients}
\end{equation}

\medskip
On every mesh~$\taun$, we consider the broken Sobolev space of order~$s\ge 0$
\begin{align*}
H^s(\taun) := \Big\{ \vbold\in[L^2(\Omega)]^3 \mid \restrict{\vbold}{\E}\in[H^s(\E)]^3\,\,\forall\E\in\taun \Big\},
\end{align*}
endowed with the seminorm
\begin{align*}
  \vert\vbold\vert_{s,\taun}^2 := \sum_{\E\in\taun}\vert\vbold\vert^2_{s,\E}.
\end{align*}
\medskip

\noindent For all elements~$\E$, we define the local $L^2$-orthogonal
projector~$\Pizboldh:[L^2(\E)]^3\rightarrow[\Pbb_0(\E)]^3$ onto constant vectors as
\begin{equation} \label{L2-projector}
  \begin{split}
    ( \Pizboldh \Ebold, \cbold)_{0,\E} =
    (\Ebold, \cbold)_{0,\E}
    \quad\quad\forall\Ebold\in[L^2(\E)]^3,\,\forall\cbold\in[\Pbb_0(\E)]^3.
  \end{split}
\end{equation}
Given a function~$\Ebold \in [H^s(\E)]^3$, $0<s\le 1$, we have the standard approximation property
\[
\Vert \Ebold - \Pizboldh \Ebold \Vert_{0,\E} \lesssim \hE^s \vert \Ebold \vert_{s,\E}.
\]

\subsection{Nodal virtual element spaces} \label{subsection:nodal-spaces}
Consider a mesh face $\F\in\Fcalh$ and set
\begin{equation}
  \xboldF = \xbold - \bboldF \quad\quad\forall\xbold\in\F .
  \label{shifted-face}
\end{equation}
%where we recall that $\bboldF$ is the center of gravity of $F$.
%%
The nodal virtual element space on face~$\F$ is
\begin{align*}
  \VnodehF :=
  \bigg\{
  \vh\in\Ccal^0(\overline\F)
  \mid
  \Delta\vh\in\Pbb_{0}(\F),\;
  \restrict{\vh}{\e}\in\Pbb_1(\e)\,\,\forall\e\in\EF,\;
  \int_\F\nabla\vh\cdot\xboldF = 0
  \bigg\}.
\end{align*}
We use $\VnodehF$ in the definition of the nodal virtual element space
on element~$\E\in\taun$, which is given by
\begin{align*}
  \VnodehE :=
  \Big\{
  \vh\in\Ccal^0(\overline\E)
  \mid
  \Delta\vh = 0,\;
  \restrict{\vh}{\F}\in\VnodehF\,\,\forall\F\in\EF
  \Big\}.
\end{align*}
Every virtual element function $\vh\in\VnodehE$ is uniquely
characterized by the set of its values at the vertices of $\E$, which
we take as the degrees of freedom . %%of the local space~$\VnodehE$.
This unisolvence property is proved in~\cite{da2018lowest}.
Then, we introduce the global virtual element space of the functions
that are globally defined on the computational domain $\Omega$ and
have zero trace on $\partial\Omega$:
%% with homogeneous boundary conditions
\begin{align*}
  \Vnodeh :=
  \Big\{
  \vh\in H^1_0(\Omega)
  \mid
  \restrict{\vh}{\E}\in\VnodehE\,\,\forall\E\in\taun
  % ,\; \restrict{\vh}{\partial\Omega}=0
  \Big\}.
\end{align*}
The degrees of freedom of~$\Vnodeh$ are given by an $H^1$-conforming coupling of the local degrees of freedom, i.e., collecting all the internal vertex values.

For future reference, we also define the interpolant~$\vI\in\Vnodeh$ of a function $v\in\Ccal^0(\overline{\Omega})$
as the unique nodal virtual element function with the same vertex values as~$v$.
Formally, this definition reads as
\begin{align}
  \vI(\nu) := v(\nu) \quad \quad \text{for all vertices~$\nu$ in~$\taun$.}
  \label{interpolant:nodal}
\end{align}
Upper bounds for the approximation error $v-\vI$ are available in the literature; see, e.g., \cite{VEMchileans,beiraolovadinarusso_stabilityVEM,brennerVEMsmall}.

\subsection{Edge virtual element spaces}
\label{subsection:edge-spaces}
\paragraph*{Space definitions.}
The edge virtual element space on face $\F\in\Fcalh$ is
\begin{equation}
  \label{local:edge:element}
  \begin{split}
    \VboldedgehF :=
    \bigg\{
    \vboldh\in[L^2(\F)]^2 \mid 
    & \divF\vboldh\in\Pbb_0(\F),\;\rotF\vboldh\in\Pbb_0(\F),\\
    & \,\vboldh\cdot\tbolde\in\Pbb_0(\e)\,\,\forall\e\in\EF,\;\int_\F\vboldh\cdot\xboldF = 0
    \bigg\},
  \end{split}
\end{equation}
where~$\xboldF$ is defined as in~\eqref{shifted-face}.
Next, we define the edge virtual element space on an element~$\E\in\taun$ as
\begin{equation} \label{local:edge-element:polyhedron}
\begin{split}
\VboldedgehE :=
\bigg\{ \vboldh
& \in [L^2(\E)]^3 \mid     \div\vboldh=0,\;    \curlbold\curlbold\vboldh\in[\Pbb_0(\E)]^3,\\    
&(\nboldF\times\vboldh{}_{|\F})\times\nboldF \in\VboldedgehF\,\,\forall\F\in\FE,\quad 
\vboldh\cdot\tbolde\text{~continuous at each edge~$\e$,~}\\[1.em]
& \int_\E\curlbold\vboldh\cdot(\xboldE\times\pbold_0)=0\,\,\forall\pbold_0\in[\Pbb_0(\E)]^3 \bigg\},
\end{split}
\end{equation}
where~$\xboldE = \xbold - \bboldE$ for all~$\xbold\in\E$.
We note that $(\nboldF\times\vboldh{}_{|\F})\times\nboldF$
corresponds to the projection of $\vboldh{}_{|\F}$ onto the tangent plane to $\F$.
The last integral constraints in~\eqref{local:edge:element} and~\eqref{local:edge-element:polyhedron} are required to allow for the computation of the $L^2$ projector onto vector constant functions defined in~\eqref{L2-projector}; see \cite[Section 4.1.2]{da2018lowest} for more details.

Every virtual element function $\vboldh\in\VboldedgehE$ is uniquely characterized by the constant values of~$\vboldh\cdot\tbolde$ on the
edges~$\e$ of~$\E$, which we take as the degrees of freedom.
The unisolvence of this set of degrees of freedom for the space~$\VboldedgehE$ is proved, e.g., in~\cite{HdivHcurlVEM}.
A noteworthy property of the local edge virtual element space is that the $L^2$-orthogonal projector~$\Pizboldh$ defined
in~\eqref{L2-projector} is computable from the degrees of freedom of the edge virtual element functions; see, e.g.,~\cite{HdivHcurlVEM}.
We define the global edge virtual element space as
\begin{align*}
  \Vboldedgeh :=
  \Big\{   \vboldh \in H_0(\curlbold,\Omega)   \mid   \vboldh{}_{|\E}\in\VboldedgehE\,\,\forall\E\in\taun  \Big\}.
\end{align*}
This definition includes the homogeneous boundary conditions on
$\partial\Omega$.
The global set of degrees of freedom of~$\Vboldedgeh$ is obtained via
an $H(\curlbold)$-conforming coupling of the local ones.

\paragraph*{Bilinear forms.}
As customary in the virtual element framework, we introduce local computable bilinear forms
\[[\cdot, \cdot]_{\EDGE,\E}: \VboldedgehE \times \VboldedgehE \rightarrow \Rbb,\]
mimicking the $L^2$ inner product~$(\cdot,\cdot)_{0,\E}$.
In particular, we first introduce the stabilizing bilinear form
$\SEedge:\VboldedgehE\times\VboldedgehE\rightarrow\Rbb$ satisfying
\begin{equation} 
  \SEedge(\vboldh,\vboldh)
  \approx
  \Vert\vboldh\Vert^2_{0,\E}
  \quad\quad \forall\vboldh\in\ker(\Pizboldh)\cap\VboldedgehE.
  \label{preliminary:stab}
\end{equation}
Then, we define the local discrete counterpart of the $L^2$ inner product as
\begin{equation}
  \big[\uboldh,\vboldh\big]_{\EDGE,\E} :=
  (\Pizboldh \uboldh, \Pizboldh \vboldh)_{0,\E} +
  \SEedge( (\Ibold-\Pizboldh)\uboldh, (\Ibold-\Pizboldh)\vboldh )
  \quad \forall\uboldh,\,\vboldh\in\VboldedgehE.
  \label{discrete:bf:edge}
\end{equation}
The local discrete bilinear forms~$[\cdot, \cdot]_{\EDGE,\E}$
satisfies the stability condition
\begin{equation} 
  \big[\vboldh,\vboldh\big]_{\EDGE,\E}
  \approx
  \Vert\vboldh\Vert^2_{0,\E}
  \quad\quad \forall\vboldh\in\VboldedgehE
  \label{stability:edge}
\end{equation}
and the consistency property
\begin{equation} 
  \big[\pbold_0,\vboldh\big]_{\EDGE,\E} =
  (\pbold_0,\vboldh)_{0,\E}
  \quad\quad
  \forall\pbold_0\in[\Pbb_0(\E)]^3,\,\vboldh\in\VboldedgehE.
  \label{consistency:edge}
\end{equation}
Whereas property~\eqref{consistency:edge} follows from
definition~\eqref{discrete:bf:edge}, property~\eqref{stability:edge}
requires the design of a suitable stabilization satisfying~\eqref{preliminary:stab}.
If we consider the stabilization
\begin{equation}
  \SEedge(\uboldh,\vboldh) :=
  \hE^2\sum_{\F\in\FE}\sum_{\e\in\EF} (\uboldh \cdot \tbolde, \vboldh \cdot \tbolde)_{0,\e}
  \quad\quad \forall\uboldh,\,\vboldh\in\VboldedgeE,
  \label{explicit:stab:edge}
\end{equation}
proposed in~\cite[formula~(4.8)]{da2018lowest}, then the stability bounds~\eqref{stability:edge} are proven in~\cite[Proposition~5.5]{face-edge-VEM-interpolation-low}.

We introduce the global discrete bilinear forms
\begin{align}
  [\varepsilonhat \uboldh, \vboldh]_{\EDGE}
  &:=
  \sum_{\E\in\taun}\varepsilonhat_{|\E} [\uboldh,\vboldh]_{\EDGE,\E}
  \quad\quad \forall\uboldh,\,\vboldh\in\Vboldedgeh,
  \label{eq:norm:edge:varepsilon}
  \intertext{and}
  [\sigmahat \uboldh, \vboldh]_{\EDGE}
  &:=
  \sum_{\E\in\taun} \sigmahat_{|\E} [\uboldh,\vboldh]_{\EDGE,\E}
  %% [\muhat^{-1} \uboldh, \vboldh]_{\EDGE}
  %% &:=
  %% \sum_{\E\in\taun} \muhat^{-1}_{|\E} [\uboldh,\vboldh]_{\EDGE,\E}
  \quad\quad \forall\uboldh,\,\vboldh\in\Vboldedgeh.
  \label{eq:norm:edge:inv-mu}
\end{align}
%%
%% In particular,
In these definitions, we scale the local bilinear forms in the
right-hand side of~\eqref{discrete:bf:edge} by~$\varepsilonhat$
and~$\sigmahat$.
%%
%%by~$\muhat$ and~$\varepsilonhat$.
%%in~\eqref{hat:parameters}.
%%
Moreover, in the forthcoming analysis, we shall employ the mesh-dependent norm
\begin{align*}
  \Vertiii{\vboldh}_{\EDGE}^2:= [\vboldh,\vboldh]_{\EDGE},
\end{align*}
which is induced by the scalar product defined
in~\eqref{eq:norm:edge:varepsilon} by setting~$\varepsilonhat=1$.

\paragraph*{Interpolation properties.}
We denote the interpolation in~$\Vboldedgeh$ of a given vector-valued
field $\vbold \in H^s(\curlbold, \Omega)$, $1/2 < s \leq 1$ by~$\vboldI$.
By definition, $\vboldI$ is the only function in~$\Vboldedgeh$ such
that
\begin{equation} \label{interpolant:edge}
  \int_\e (\vbold - \vboldI) \cdot \tbolde = 0 \quad \quad \forall \e \in \Ecaln.
\end{equation}
We recall the following interpolation result; see~\cite[Proposition~4.5;~Corollary~4.6]{face-edge-VEM-interpolation-low}.

\medskip
\begin{prop} \label{prop:interpolation:edge}
Let~$\vbold \in H^s(\curlbold, \Omega)$, $1/2<s\le1$, and $\vboldI\in \Vboldedgeh$ be its interpolant as defined in~\eqref{interpolant:edge}.
Then, for all~$\E \in \taun$, we have that that
  \begin{equation} \label{interpolation-estimates:edge}
    \begin{split}
      \Vert \vbold - \vboldI \Vert_{0,\E}
      &\lesssim
      \hE^s \vert \vbold \vert_{s,\E} + \hE \Vert \curlbold \vbold \Vert_{0,\E} + \hE^{s+1} \vert \curlbold \vbold \vert_{s,\E},\\[0.5em]
      \Vert \curlbold(\vbold - \vboldI) \Vert_{0,\E}
      &\lesssim
      \hE^s \vert\curlbold \vbold \vert_{s,\E}.
    \end{split}
  \end{equation}
\end{prop}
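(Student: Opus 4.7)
The plan leverages the commuting diagram property of the discrete de~Rham sequence, together with a VEM-specific stability argument for the edge interpolant.

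I would first dispose of the $\curlbold$ estimate, which comes essentially for free. By Stokes' theorem on each face $\F\in\FE$ and the defining property~\eqref{interpolant:edge} of $\vboldI$,
\begin{equation*}
\int_\F \curlbold(\vbold - \vboldI)\cdot\nboldF
= \int_{\partial\F}(\vbold - \vboldI)\cdot\tbolde = 0.
\end{equation*}
Since $\curlbold\vboldI$ belongs to the lowest-order face virtual space by the exact sequence property, it coincides with the face VEM interpolant of the divergence-free field $\curlbold\vbold$; a standard face VEM Bramble-Hilbert bound then closes the argument.

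For the $L^2$ estimate, I would use the usual polynomial-splitting trick. Set $\pbold_0:=\Pizboldh\vbold$ and $\wbold:=\vbold - \pbold_0$. Since constants belong to $\VboldedgehE$ and are preserved by the interpolation,
\begin{equation*}
\vbold - \vboldI = \wbold - \wbold_I.
\end{equation*}
The Bramble-Hilbert inequality yields $\Vert\wbold\Vert_{0,\E}\lesssim \hE^s\vert\vbold\vert_{s,\E}$, while $\curlbold\wbold=\curlbold\vbold$. The task reduces to bounding $\Vert\wbold_I\Vert_{0,\E}$ by an appropriately scaled $H^s(\curlbold)$ graph norm of $\wbold$; such an estimate, with the correct powers of $\hE$, produces exactly the three terms on the right-hand side of the claim.

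The main obstacle is this $H^s(\curlbold)\to L^2$ stability of the virtual interpolation operator. Since functions in $\VboldedgehE$ are only implicitly defined, $\Vert\wbold_I\Vert_{0,\E}$ cannot be evaluated directly. The route I would follow is: first, scale to a shape-regular reference polytope and establish a norm equivalence between $\Vert\wbold_I\Vert_{0,\E}$ and a weighted Euclidean norm of the edge degree-of-freedom vector $\{\int_\e\wbold\cdot\tbolde\}$, with $\e$ ranging over the edges of $\E$; second, control each edge DOF by combining the $H^s$-trace theorem from $\E$ to its faces (which requires $s>1/2$) with a scaled face-to-edge trace inequality. The $\curlbold$-dependent norms on the right-hand side of the claim enter precisely in this second step, since the tangential trace of a generic $H^s$ field on a one-dimensional edge is quantitatively controlled only once the $\curlbold$-regularity is brought into play.
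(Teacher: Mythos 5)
The paper does not actually prove Proposition~\ref{prop:interpolation:edge}; it imports it from \cite{face-edge-VEM-interpolation-low}, so your proposal is being measured against the standard argument used there rather than an in-text proof. Your treatment of the $\curlbold$ estimate is correct and is exactly that standard route: the edge moments of $\vbold-\vboldI$ vanish, Stokes' theorem kills the face moments of $\curlbold(\vbold-\vboldI)$, hence $\curlbold\vboldI=(\curlbold\vbold)_I$ (the commuting property of Remark~\ref{remark:interpolation-exact}), and Proposition~\ref{prop:interpolation:face} closes the bound. The polynomial splitting $\vbold-\vboldI=\wbold-\wbold_I$ with $\wbold=(\Ibold-\Pizboldh)\vbold$ is also the right start (constants do belong to $\VboldedgehE$ and are reproduced by the interpolation), and the reduction of $\Vert\wbold_I\Vert_{0,\E}$ to a weighted norm of the edge degrees of freedom is legitimate, provided you quote it rather than "scale to a reference polytope" --- for general polyhedra there is no finite family of reference elements, and the equivalence $\Vert\wbold_I\Vert_{0,\E}^2\approx\hE^2\sum_{\F\in\FE}\sum_{\e\in\EF}\Vert\wbold_I\cdot\tbolde\Vert_{0,\e}^2$ is precisely the nontrivial stability result~\eqref{stability:edge}.

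The genuine gap is the last step, which is where the real content of the proposition lives. You propose to bound each moment $\int_\e\wbold\cdot\tbolde$ by tracing $H^s(\E)\to H^{s-1/2}(\F)$ and then applying a ``scaled face-to-edge trace inequality.'' For $1/2<s\le 1$ the face trace only lands in $H^{s-1/2}(\F)$ with $s-1/2\le 1/2$, and functions of that smoothness on a two-dimensional face have no bounded trace on a one-dimensional edge; so the second trace step fails throughout the regularity range the proposition covers. Observing that ``the $\curlbold$-dependent norms enter in this second step'' names where help is needed but does not supply the mechanism. The standard device --- and the reason the right-hand side carries $\hE\Vert\curlbold\vbold\Vert_{0,\E}$ and $\hE^{s+1}\vert\curlbold\vbold\vert_{s,\E}$ --- is to work on each face with the \emph{pair} consisting of the tangential trace in $H^{s-1/2}(\F)$ and its planar rotor $\rotF(\vbold_\F)=\curlbold\vbold\cdot\nboldF$, and then invoke a two-dimensional lemma on the boundedness of the edge-moment functionals on $\{\ubold\in H^{\sigma}(\F)\mid\rotF\ubold\in L^2(\F)\}$, $\sigma>0$ (equivalently, the stability of the two-dimensional edge interpolant on that space, or a Helmholtz-type splitting of the tangential trace). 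Without this ingredient the argument does not close; with it, the powers of $\hE$ you anticipate do come out as stated.
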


We shall use inequalities~\eqref{interpolation-estimates:edge} in the
forthcoming analysis of Sections~\ref{section:semi-discrete} and~\ref{section:fully-discrete}.

\subsection{Face virtual element spaces}  \label{subsection:face-spaces}
\paragraph*{Space definitions.}
Given an element~$\E \in \taun$, we define the face virtual element
space as
\begin{align*}
\begin{split}
\VboldfacehE := \bigg\{ \psiboldh
& \in[L^2(\E)]^3 \mid \div\psiboldh\in\Pbb_0(\E),\;\curlbold\psiboldh\in[\Pbb_0(\E)]^3,\\
& \psiboldh\cdot\nboldF\in\Pbb_0(\F)\,\forall\F\in\EE,\\
& \int_\E \psiboldh \cdot (\xboldE \times \pbold_0) = 0 \ \forall\pbold_0 \in [\Pbb_0(\E)]^3 \bigg\},
\end{split}
\end{align*}
where we recall that $\xboldE$ is defined as $\xbold-\bboldE$ for
all~$\xbold\in\E$.
Every virtual element function $\psiboldh\in\VboldfacehE$ is uniquely
characterized by the constant values of~$\psiboldh\cdot\nboldF$ on the
faces~$\F$ of~$\FE$, which we take as the degrees of freedom.
The unisolvence of this set of degrees of freedom for the space~$\VboldfacehE$ is proved in~\cite{HdivHcurlVEM}.
A noteworthy property of the local face virtual element space is that
the $L^2$-orthogonal projector~$\Pizboldh$ defined
in~\eqref{L2-projector} is computable from the degrees of freedom
of the face virtual element functions; see, e.g.,~\cite{HdivHcurlVEM}.
We define the global face virtual element space as
\begin{align*}
  \Vboldfaceh :=
  \bigg\{
  \psiboldh \in H_0(\div,\Omega)
  \mid
  \psiboldh{}_{|\E}\in\VboldfacehE \,\,\forall\E\in\taun
  % ,\; \psiboldh{}_{|\partial\Omega}\cdot\nbold_{\Omega}=0
  \bigg\}.
\end{align*}
This definition includes homogeneous boundary conditions on~$\partial\Omega$.
The set of degrees of freedom of~$\Vboldfaceh$ is obtained via an
$H(\div)$-conforming coupling of the local ones, i.e., by collecting
together the internal degrees of freedom.

\paragraph*{Bilinear forms.}
As in the edge element case, we introduce local computable bilinear forms
\[\big[\cdot,\cdot\big]_{\FACE,\E}:\VboldfacehE\times\VboldfacehE\rightarrow\Rbb,\]
mimicking the $L^2$ inner product $(\cdot,\cdot)_{0,\E}$ on~$\VboldfacehE$.
In particular, we first introduce the stabilizing bilinear
form~$\SEface:\VboldfacehE\times\VboldfacehE\rightarrow\Rbb$
satisfying
\begin{equation} \label{stability:preliminary:face}
  \SEface(\psiboldh,\psiboldh)
  \approx \Vert\psiboldh\Vert^2_{0,\E}
  \quad\quad \forall\psiboldh\in\ker(\Pizboldh)\cap\VboldfacehE.
\end{equation}
%% Recall, e.g., from~\cite{da2018lowest} that projector~$\Pizboldh$
%% in~\eqref{L2-projector-edge} is computable for face virtual element
%% functions through the degrees of freedom.
%%
Then, we define the local discrete counterpart of the $L^2$ inner
product as
\begin{equation} \label{discrete:bf:face}
  \big[\psiboldh,\phiboldh\big]_{\FACE,\E} :=
  (\Pizboldh\psiboldh,\Pizboldh\phiboldh)_{0,\E} +
  \SEface( (\Ibold-\Pizboldh)\psiboldh, (\Ibold-\Pizboldh)\phiboldh )
  \quad \forall\psiboldh,\,\phiboldh\in\VboldfacehE.
\end{equation}
The local discrete bilinear forms~$[\cdot,\cdot]_{\FACE,\E}$ satisfies the stability condition
\begin{equation}
  \label{stability:face}
  \big[\psiboldh,\psiboldh\big]_{\FACE,\E}
  \approx
  \Vert\psiboldh\Vert^2_{0,\E}
  \quad\quad \forall\psiboldh\in\VboldfacehE
\end{equation}
and the consistency property
\begin{equation}
  \label{consistency:face}
  \big[\pbold_0,\psiboldh\big]_{\FACE,\E} =
  (\pbold_0,\psiboldh)_{0,\E}
  \quad\quad \forall\pbold_0\in[\Pbb_0(\E)]^3,\,\psiboldh\in\VboldfacehE. 
\end{equation}
Whereas~\eqref{consistency:face} follows from
definition~\eqref{discrete:bf:face}, property~\eqref{stability:face}
requires the design of a suitable stabilization satisfying~\eqref{stability:preliminary:face}.
We consider the stabilization, cf.~\cite[(4.17)]{da2018lowest},
\begin{equation}
  \label{explicit:stab:face}
  \SEface(\psiboldh,\phiboldh) :=
  \hE\sum_{\F\in\FE} (\nboldF \cdot  \psiboldh, \nboldF \cdot \phiboldh )_{0,\F}
  \quad\quad
  \forall\psiboldh,\,\phiboldh\in\VboldfaceE.
\end{equation}
The stability bounds~\eqref{stability:face} are proven
in~\cite[Proposition~5.2]{face-edge-VEM-interpolation-low}.
Finally, we introduce the global discrete bilinear form
\begin{equation}
  \big[\muhat^{-1}\psiboldh,\phiboldh\big]_{\FACE} :=
  \sum_{\E\in\taun} \muhat^{-1}_{|\E} [\psiboldh,\phiboldh]_{\FACE,\E}
  \quad\quad \forall\psiboldh,\,\phiboldh\in\Vboldfaceh.
  \label{scaled:bf:face}
\end{equation}
In the forthcoming analysis, we shall employ the mesh-dependent norm
\begin{align*}
  \Vertiii{\psiboldh}_{\FACE}^2 := \big[\psiboldh,\psiboldh\big]_{\FACE}
  \quad \forall\psiboldh\in\Vboldfaceh,
\end{align*}
which is induced by the scalar product defined
in~\eqref{scaled:bf:face} by setting~$\muhat=1$.

\paragraph*{Interpolation properties.}
We denote the interpolation in~$\Vboldface$ of a given vector-valued
field ~$\psibold \in [H^s(\Omega)]^3$, $1/2 < s \leq 1$
by~$\psiboldI$.
By definition, $\psiboldI$ is the only function in~$\Vboldface$ such
that
\begin{equation}
  \label{interpolant:face}
  \int_\F (\psibold - \psiboldI) \cdot \nboldF = 0 \quad \quad \forall \F \in \Fcaln.
\end{equation}
We recall the following interpolation result; see~\cite[Proposition
  3.2; Corollary 3.3]{face-edge-VEM-interpolation-low}.

\medskip
\begin{prop}
  \label{prop:interpolation:face}
  Let~$\psibold\in [H^s(\div,\Omega)]^3$, $1/2<s\leq1$, and~$\psiboldI$ be
  defined as in~\eqref{interpolant:face}.
  Then, for all $\E\in\taun$, we find that
  %%
  %% the following bounds are valid:
  \begin{equation}
    \label{interpolation-estimates:face}
    \begin{split}
      & \Vert \psibold - \psiboldI \Vert_{0,\E}
      \lesssim
      \hE^s \vert\psibold\vert_{s,\E},
      \quad\quad \Vert \div(\psibold - \psiboldI) \Vert_{0,\E}
      \lesssim
      \hE^s \vert \div \psi \vert_{s,\E}.
    \end{split}
  \end{equation}
\end{prop}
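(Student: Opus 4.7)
The plan is to treat the two estimates separately, obtaining the divergence bound essentially for free from a commuting diagram property of the interpolant, and handling the $L^2$ bound through a triangle inequality with a piecewise constant approximation plus a VEM stability argument.

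For the divergence estimate, first I would show that the interpolant commutes with the $L^2$-projection onto $\Pbb_0(\E)$, that is, $\div \psiboldI = \Pizh (\div \psibold)$ on each element $\E$. Indeed, since $\div \psiboldI \in \Pbb_0(\E)$ by the defining condition of $\VboldfacehE$, the divergence theorem together with the interpolation condition~\eqref{interpolant:face} gives
\begin{equation*}
  |\E| \, \div \psiboldI = \int_\E \div \psiboldI = \sum_{\F \in \FE} \int_\F \psiboldI \cdot \nboldF = \sum_{\F \in \FE} \int_\F \psibold \cdot \nboldF = \int_\E \div \psibold,
\end{equation*}
so that $\div \psiboldI$ equals the average of $\div \psibold$ on $\E$. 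The second bound in~\eqref{interpolation-estimates:face} then follows from the standard approximation property of the $L^2$-projection onto constants on the $\gamma$-shape regular element $\E$.

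For the $L^2$-estimate, I would introduce a constant vector polynomial $\pbold_0 \in [\Pbb_0(\E)]^3$ (for instance $\pbold_0 = \Pizboldh \psibold$) and split
\begin{equation*}
  \Vert \psibold - \psiboldI \Vert_{0,\E} \le \Vert \psibold - \pbold_0 \Vert_{0,\E} + \Vert \pbold_0 - \psiboldI \Vert_{0,\E}.
\end{equation*}
The first term is controlled by $\hE^s |\psibold|_{s,\E}$ via the standard approximation of the $L^2$-projection on shape regular elements. For the second term, I would observe that $\pbold_0 - \psiboldI \in \VboldfacehE$, and exploit a discrete stability estimate, valid for functions in $\VboldfacehE$, that bounds the $L^2$-norm by a scaled sum of squared face degrees of freedom, namely
\begin{equation*}
  \Vert \vboldh \Vert_{0,\E}^2 \lesssim \hE \sum_{\F \in \FE} \Vert \vboldh \cdot \nboldF \Vert_{0,\F}^2 \qquad \forall \vboldh \in \VboldfacehE,
\end{equation*}
which is effectively the content of the face stabilization analysis already recalled in~\eqref{explicit:stab:face}--\eqref{stability:face}. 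Applied to $\vboldh = \pbold_0 - \psiboldI$, whose face moments coincide with those of $\pbold_0 - \psibold$ by the defining property~\eqref{interpolant:face}, this reduces the problem to estimating $\int_\F (\pbold_0 - \psibold) \cdot \nboldF$ for each face $\F \in \FE$.

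To close the argument, I would apply a Cauchy–Schwarz inequality on each face followed by a trace inequality of the form $\Vert w \Vert_{0,\F}^2 \lesssim \hE^{-1} \Vert w \Vert_{0,\E}^2 + \hE^{2s-1} |w|_{s,\E}^2$ for $w \in H^s(\E)$ with $s>1/2$, taking $w = \pbold_0 - \psibold$. Since $\pbold_0$ is constant, $|\pbold_0 - \psibold|_{s,\E} = |\psibold|_{s,\E}$, and the $L^2$ contribution is again absorbed into $\hE^s |\psibold|_{s,\E}$ by the approximation property of $\pbold_0$. Combining everything yields the first bound in~\eqref{interpolation-estimates:face}. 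The main obstacle I would expect is the discrete stability bound for $\VboldfacehE$ in terms of the face moments, which requires a polytopal-mesh argument rather than the usual mapping to a reference element; this is the nontrivial ingredient that is essentially imported from~\cite{face-edge-VEM-interpolation-low}. Everything else is a careful but routine combination of trace inequalities, polynomial approximation on shape regular elements, and the defining degrees of freedom.
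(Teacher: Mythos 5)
Your proof is correct and follows essentially the same route as the reference from which the paper recalls this result without proof (cf.~\cite[Proposition~3.2; Corollary~3.3]{face-edge-VEM-interpolation-low}): the divergence bound via the commuting property $\div\psiboldI=\Pizh(\div\psibold)$, and the $L^2$ bound via a triangle inequality with $\Pizboldh\psibold$, the degree-of-freedom--based stability of $\VboldfacehE$, and a fractional trace inequality. The one nonroutine ingredient, which you correctly identify as imported, is the bound $\Vert\vboldh\Vert_{0,\E}^2\lesssim\hE\sum_{\F\in\FE}\Vert\vboldh\cdot\nboldF\Vert_{0,\F}^2$ for \emph{all} $\vboldh\in\VboldfacehE$ (not only on $\ker(\Pizboldh)$), which is precisely~\cite[Proposition~5.2]{face-edge-VEM-interpolation-low}.
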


We shall use inequalities~\eqref{interpolation-estimates:face} in Sections~\ref{section:semi-discrete} and~\ref{section:fully-discrete}.

\subsection{Exact sequence properties}
\label{subsection:exact-sequence}
\noindent
We set
\begin{equation} \label{vtilde-space-edge}
  \Vtildeboldedgeh :=
  \left\{
  \vboldh \in \Vboldedgeh \mid \curlbold(\vboldh)=0 \right\}.
\end{equation}
As observed in~\cite[equation~(4.33)]{da2018lowest}, the following
identity is valid:
\begin{equation} \label{exact:sequence:VEM-edge}
  \Vtildeboldedgeh = \nabla(\Vnodeh).
\end{equation}
Analogously, we set
\begin{equation} \label{Vtilde-space}
  \Vtildeboldfaceh :=
  \left\{
  \psiboldh\in\Vboldfaceh
  \mid
  \div \psiboldh =0
  \right\}.
\end{equation}
As observed in~\cite[equation~(4.35)]{da2018lowest}, the following
identity is also valid:
\begin{equation} \label{exact:sequence:VEM}
  \Vtildeboldfaceh = \curlbold(\Vboldedgeh).
\end{equation}
In particular, the spaces~$\Vnodeh$, $\Vboldedgeh$, and~$\Vboldfaceh$
form an exact sequence; see, e.g., \cite{HdivHcurlVEM,da2018lowest}.

\medskip
\begin{remark} \label{remark:interpolation-exact}
  As shown in~\cite{da2018lowest}, the following commuting diagram properties are valid:
  \smallskip
  \begin{itemize}
  \item $\div\BboldI =\Pizh(\div \Bbold)$ for a given vector-valued
    field~$\Bbold\in H^s(\div,\Omega)$, $1/2 < s \le 1$, where~$\Pizh$
    is the scalar version of the $L^2$ projector
    in~\eqref{L2-projector} and~$\BboldI$ is the face interpolation
    of~$\Bbold$ defined in~\eqref{interpolant:face};

    \smallskip
  \item $\curlbold \EboldI = (\curlbold \Ebold)_I$ for a given
    vector-valued field~$\Ebold\in H^s(\curlbold,\Omega)$, $1/2<s \le
    1$, where on the left- and right-hand sides we consider the edge
    and face interpolations of~$\Ebold$ and~$\curlbold(\Ebold)$ that
    are respectively defined in~\eqref{interpolant:edge}
    and~\eqref{interpolant:face};

    \smallskip
  \item $\nabla \vI = (\nabla v)_I$ for a given scalar field~$v\in
    H^1(\Omega)$, where on the left- and right-hand sides we consider
    the nodal and edge interpolations of~$v$ and~$\nabla v$ that are
    respectively defined in~\eqref{interpolant:nodal}
    and~\eqref{interpolant:edge}.
  \end{itemize}
\end{remark}

% ----------------------------------------------
\subsection{Two novel operators}
% ---------------------------------------------
We introduce two novel operators on the spaces~$\Vboldedgeh$
and~$\Vboldfaceh$, which also satisfy a commuting diagram property;
see Proposition~\ref{prop:commuting-diagram} below.
We begin by defining the weighted, global projector~$\Piboldh :
H(\curlbold,\Omega) \rightarrow \Vboldedgeh$ as
\begin{equation}
  \label{curl-projector-edge}
  \begin{split}
    & \begin{cases}
        [\muhat^{-1} \curlbold(\Piboldh \Ebold), \curlbold \wboldh]_{\FACE}
        = (\mu^{-1} \curlbold \Ebold, \curlbold \wboldh)_{0,\Omega}
        \quad \forall \wboldh\in\Vboldedgeh\\[0.5em]
        [\Piboldh \Ebold, \nabla \sh]_{\EDGE}
        = (\Ebold, \nabla \sh)_{0,\Omega}
        \quad  \forall \Ebold\in H(\curlbold,\Omega),\,\forall\sh\in\Vnodeh.
      \end{cases} 
  \end{split}
\end{equation}
The discrete bilinear form~$[\cdot,\cdot]_{\FACE}$ appearing in the
first equation of~\eqref{curl-projector-edge} is well defined.
In fact, thanks to~\eqref{exact:sequence:VEM},
$\curlbold(\Piboldh(\Ebold))$ belongs to~$\VboldfacehE$.
An analogous observation applies for the form~$[\cdot,\cdot]_{\EDGE}$
appearing in the second equation.

In order to prove the approximation properties of the projector~$\Piboldh$, we need two preliminary technical results.
\medskip 

\begin{lem} \label{lemma:inverse-estimate-face}
Consider~$\E\in\taun$. Then it exists a real parameter $1/2<s\le1$, depending on the shape regularity constant of~$\E$, such that the following inverse inequality is valid:
  \begin{equation} \label{inverse-estimate-face}
    \vert \psiboldh \vert _{s,\E}
    \lesssim \hE^{-s}
    \Vert \psiboldh \Vert_{0,\E} 
    \quad \quad \psiboldh \in \VboldfacehE.
  \end{equation}
\end{lem}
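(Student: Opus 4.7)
The plan is to combine classical Lipschitz-polyhedron regularity theory for the div-curl system with the finite-dimensional structure of $\VboldfacehE$. The starting point is a regularity result in the spirit of Costabel and of Amrouche--Bernardi--Dauge--Girault: for every bounded Lipschitz polyhedron $\E$, there exists an exponent $s = s(\E) \in (1/2, 1]$, depending only on the geometry of $\E$, such that
\begin{equation*}
\vert \vbold \vert_{s, \E} \lesssim \Vert \vbold \Vert_{0, \E} + \Vert \div \vbold \Vert_{0, \E} + \Vert \curlbold \vbold \Vert_{0, \E} + \Vert \vbold \cdot \nboldE \Vert_{0, \partial \E}
\end{equation*}
for every $\vbold$ with $\div \vbold, \curlbold \vbold \in L^2(\E)$ and $\vbold \cdot \nboldE \in L^2(\partial \E)$. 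Under the $\gamma$-shape-regularity assumption, both $s$ and the hidden constant can be taken to depend only on $\gamma$, hence uniformly over the mesh family.

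Next, I would make this estimate quantitative in $\hE$ by applying it on the rescaled polyhedron $\hat\E := \E/\hE$ and pulling it back to $\E$ via the change of variable $\hat\xbold = \xbold/\hE$. Tracking the powers of $\hE$ picked up by the squared norms involved --- they scale as $\hE^{2s-3}$, $\hE^{-3}$, $\hE^{-1}$, $\hE^{-1}$, $\hE^{-2}$ respectively --- yields, after multiplication by $\hE^{3-2s}$,
\begin{equation*}
\vert \psiboldh \vert_{s, \E}^2 \lesssim \hE^{-2s} \Vert \psiboldh \Vert_{0, \E}^2 + \hE^{2-2s}\bigl(\Vert \div \psiboldh \Vert_{0, \E}^2 + \Vert \curlbold \psiboldh \Vert_{0, \E}^2\bigr) + \hE^{1-2s} \Vert \psiboldh \cdot \nboldE \Vert_{0, \partial \E}^2,
\end{equation*}
with an $\hE$-independent hidden constant.

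It would then remain to establish the VEM-type inverse estimates
\begin{equation*}
\Vert \div \psiboldh \Vert_{0, \E} + \Vert \curlbold \psiboldh \Vert_{0, \E} \lesssim \hE^{-1} \Vert \psiboldh \Vert_{0, \E}, \qquad \Vert \psiboldh \cdot \nboldE \Vert_{0, \partial \E} \lesssim \hE^{-1/2} \Vert \psiboldh \Vert_{0, \E},
\end{equation*}
which follow from finite-dimensional norm equivalence on the rescaled space $\Vboldfaceh(\hat\E)$ (whose dimension equals the number of faces of $\E$ and is uniformly bounded in terms of $\gamma$) combined with the same scaling. Plugging these bounds into the scaled regularity inequality turns each of the three non-$L^2$ contributions into a multiple of $\hE^{-2s}\Vert \psiboldh \Vert_{0,\E}^2$, which delivers the claim. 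The main obstacle is ensuring that the regularity exponent $s$ stays uniformly bounded away from $1/2$ across the mesh family, since the Sobolev exponent for div-curl problems on Lipschitz polyhedra deteriorates when re-entrant corners or edges become sharp; the $\gamma$-shape-regularity assumption rules out such geometric degenerations and produces a uniform $s = s(\gamma) > 1/2$, with an $\hE$-independent constant in the final estimate.
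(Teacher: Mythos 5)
Your overall architecture (a div--curl regularity estimate on $\E$, made quantitative by scaling, plus inverse estimates for the polynomial quantities $\div\psiboldh$, $\curlbold\psiboldh$, $\nboldE\cdot\psiboldh$) is the same as the paper's, but the specific regularity inequality you start from is false as stated, and this is where the $s>1/2$ requirement of the lemma lives. With the normal trace controlled only in $L^2(\partial\E)$, the best one can conclude is $\vbold\in H^{1/2}(\E)$ (Costabel's embedding), and this is sharp even on a smooth domain: take $\vbold=\nabla\Psi$ with $\Psi$ harmonic and Neumann datum $g\in L^2(\partial\E)$ arbitrary; then $\div\vbold=0$, $\curlbold\vbold=\mathbf{0}$, $\vbold\cdot\nboldE=g\in L^2(\partial\E)$, yet $\Psi$ is only in $H^{3/2}(\E)$, so $\vbold$ is only in $H^{1/2}(\E)$. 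Hence there is no uniform $s>1/2$ for which your first displayed inequality holds, and no amount of shape regularity repairs it; the obstruction is the boundary norm, not re-entrant corners. The exponent $s>1/2$ with zero boundary weight is only available for fields with \emph{vanishing} normal (or tangential) trace, which is exactly the form of the result of Amrouche et al.\ that the paper invokes.

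The paper circumvents this by first splitting $\psiboldh=\zbold+\nabla\Psi$, where $\Psi$ is the harmonic Neumann lifting of $\nboldE\cdot\psiboldh$: the remainder $\zbold$ has zero normal trace and falls under the $H^{s}$ embedding with $s>1/2$, while the gradient part is controlled through $\Vert\nboldE\cdot\psiboldh\Vert_{s-\frac12,\partial\E}$ via elliptic regularity. The resulting boundary term is then absorbed because $\nboldE\cdot\psiboldh$ is facewise constant, so an inverse inequality trades $H^{s-\frac12}(\partial\E)$ for $\hE^{-(s-1/2)}$ times the $L^2(\partial\E)$ norm; your estimate can be repaired in exactly the same way by replacing $\Vert\vbold\cdot\nboldE\Vert_{0,\partial\E}$ with $\Vert\vbold\cdot\nboldE\Vert_{s-\frac12,\partial\E}$ (whose scaling then contributes the factor $1$ rather than $\hE^{1-2s}$) and adding this extra inverse estimate. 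A secondary caveat: justifying the inverse estimates by ``finite-dimensional norm equivalence'' is delicate here because the space $\VboldfacehE$ changes with the element, so the equivalence constant must be shown to depend only on $\gamma$; the paper does this through explicit bubble-function arguments rather than an abstract equivalence-of-norms claim.
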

%%%
\begin{proof}
  Define~$\Psi$ as the solution to
  \begin{align*}
    \begin{cases}
      \Delta \Psi = 0                                           & \text{in } \E\\
      \nboldE \cdot \nabla \Psi = \nboldE \cdot \psiboldh       & \text{on } \partial \E\\
      \int_{\E} \Psi = 0 .
    \end{cases}
  \end{align*}
Standard regularity results for elliptic problems, see, e.g., \cite[Corollary~$23.5$]{dauge2006elliptic}, entail that there
  exists $1/2 < s \le 1$ such that
\begin{equation} \label{elliptic:regularity}
    \vert \Psi \vert_{s+1,\E} \lesssim \Vert \nboldE \cdot \psiboldh  \Vert_{s-\frac{1}{2},\partial\E}.
\end{equation}
Define~$\zbold := \psiboldh - \nabla \Psi$ and observe that~$\nboldE \cdot \zbold = 0$ on~$\partial\E$.
We recall from~\cite[Proposition~3.7]{amrouche1998vector} that for all~$\zbold \in H(\curlbold,\E) \cap H(\div,\E)$
with~$\nboldE\cdot\zbold{}_{|\partial\E} = 0$, there exists~$1/2<s\le1$ such that
\begin{equation} \label{Amrouche:prop3.7:prel}
\vert \zbold \vert_{s,\E} 
\lesssim \hE^{-s} \Vert \zbold \Vert_{0,\E} + \hE^{1 - s} \Vert \curlbold \zbold \Vert_{0,\E} + \hE^{1-s} \Vert \div \zbold \Vert_{0,\E}.
\end{equation}
%%%  
Recalling that $\nboldE \cdot \zbold=0$ on $\partial\E$, $\div\zbold=\div\psiboldh \in \Pbb_0(\E)$,
and $\Psi$ has vanishing integral on $\E$, an integration by parts yields
$$
(\zbold,\nabla\Psi)_{0,\E}  = -(\div\zbold,\Psi)_{0,\E} + (\nboldE \cdot \zbold,\Psi)_{0,\partial\E} = 0 \ .
$$  
Therefore
\[
\Vert \zbold \Vert_{0,\E}^2 = (\zbold,\zbold)_{0,\E} = (\zbold,\psiboldh)_{0,\E} \le \Vert \zbold \Vert_{0,\E} \Vert \psiboldh \Vert_{0,\E} \lm{.}
\]
\lm{Thus,} \eqref{Amrouche:prop3.7:prel} becomes
\begin{equation} \label{Amrouche:prop3.7}
    \vert \zbold \vert_{s,\E} 
    \lesssim \hE^{-s} \Vert \psiboldh \Vert_{0,\E} 
    + \hE^{1 - s} \lm{(\Vert \div \zbold \Vert_{0,\E} + \Vert \curlbold \zbold \Vert_{0,\E}  )} .
  \end{equation}
%%%  
We take the minimum~$s$ such that~\eqref{elliptic:regularity} and~\eqref{Amrouche:prop3.7} are valid.
Using the triangle inequality, \eqref{elliptic:regularity}, \eqref{Amrouche:prop3.7},
the fact that~$\curlbold(\nabla \Psi) = \mathbf 0$ and~$\div(\nabla \Psi)=0$,
and the fact that~$\nboldE \cdot \psiboldh{}_{|\F} \in \Pbb_0(\F)$ for all~$\F \in \EE$\lm{,}
we easily obtain that
\begin{align*}
    \begin{split}
      \vert \psiboldh \vert_{s,\E}
      & \lesssim \vert \zbold \vert_{s,\E} + \vert \Psi \vert_{s+1,\E} \\
      & \lesssim \hE^{1-s} (\Vert \div \psiboldh \Vert_{0,\E} + \Vert \curlbold \psiboldh \Vert_{0,\E})
      + \Vert \nboldE \cdot \psiboldh \Vert_{s-\frac{1}{2},\partial\E} + \hE^{-s} \Vert \psiboldh \Vert_{0,\E}.
    \end{split}
\end{align*}
We are left to show a bound for each term on the right-hand side in terms of~$\hE^{-s} \Vert \psiboldh \Vert_{0,\E}$.
We can prove such bounds based on employing polynomial inverse inequalities as, e.g., in the proofs of \cite[Proposition~$4.1$,~Proposition~$4.2$]{face-edge-VEM-interpolation-low}.
The main ingredients are the fact that~$\div \psiboldh \in \Pbb_0(\E)$, $\curlbold \psiboldh \in [\Pbb_0(\E)]^3$,
$\nboldE \cdot \psiboldh{}_{|\F} \in \Pbb_0(\F)$ for all~$\F \in \EE$, and inverse inequalities involving bubbles.
\qed
\end{proof}
\medskip

The second auxiliary result is a coercivity property on the kernel of
edge functions
\begin{align*}
  \Zboldh:= \left\{ \vboldh \in \Vboldedgeh \mid [\vboldh,\nabla \vh]_{\EDGE}=0 \; \forall\vh \in \Vnodeh   \right\}.
\end{align*}
\medskip

\begin{lem} \label{lemma:coercivity-kernel}
The following coercivity property on the kernel~$\Zboldh$ is valid:
\begin{equation} \label{coercivity:kernel}
\Vertiii{\curlbold \vboldh}_{\FACE} \gtrsim \Vert \vboldh \Vert _{0,\Omega}
\quad \quad \forall \vboldh \in \Zboldh.
\end{equation}
\end{lem}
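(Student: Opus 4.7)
The plan is to prove a discrete Friedrichs--Weber inequality on the kernel $\Zboldh$, where the orthogonality against all discrete gradients $\nabla\Vnodeh$ that defines $\Zboldh$ plays the role of the continuous divergence-free constraint. I would bridge to the classical continuous Friedrichs inequality through a Helmholtz-type decomposition combined with the edge interpolation operator, the exact sequence property \eqref{exact:sequence:VEM-edge}, and the new inverse estimate of Lemma \ref{lemma:inverse-estimate-face}.

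First, given $\vboldh\in\Zboldh\subset H_0(\curlbold,\Omega)$, I would split $\vboldh=\nabla p+\zbold$ with $p\in H^1_0(\Omega)$ determined by $(\nabla p,\nabla q)_{0,\Omega}=(\vboldh,\nabla q)_{0,\Omega}$ for all $q\in H^1_0(\Omega)$. Then $\zbold:=\vboldh-\nabla p\in H_0(\curlbold,\Omega)$ is weakly divergence-free and satisfies $\curlbold\zbold=\curlbold\vboldh$. The Friedrichs--Weber inequality on the Lipschitz polyhedron $\Omega$ yields $\Vert\zbold\Vert_{0,\Omega}\lesssim\Vert\curlbold\vboldh\Vert_{0,\Omega}$, while Costabel-type regularity for divergence-free fields with controlled $\curlbold$ (whose right-hand side is the piecewise smooth $\curlbold\vboldh\in\Vboldfaceh$) gives $\zbold\in H^{s}(\Omega)^3$ for some $s>1/2$ with $\vert\zbold\vert_{s,\Omega}\lesssim\Vert\curlbold\vboldh\Vert_{0,\Omega}$.

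Next, let $\zbold_I\in\Vboldedgeh$ be the edge interpolant \eqref{interpolant:edge} of $\zbold$. The commuting diagram in Remark \ref{remark:interpolation-exact} gives $\curlbold\zbold_I=(\curlbold\zbold)_I$; since $\curlbold\zbold=\curlbold\vboldh$ already lies in $\Vboldfaceh$, its face interpolant equals itself. Hence $\curlbold(\vboldh-\zbold_I)=\mathbf 0$ and, by the exact sequence \eqref{exact:sequence:VEM-edge}, $\vboldh-\zbold_I=\nabla\phi_h$ for some $\phi_h\in\Vnodeh$. Using $\vboldh\in\Zboldh$ one obtains $[\vboldh,\vboldh]_{\EDGE}=[\vboldh,\zbold_I]_{\EDGE}$, and combining with the stability \eqref{stability:edge} and Cauchy--Schwarz yields $\Vert\vboldh\Vert_{0,\Omega}\lesssim\Vert\zbold_I\Vert_{0,\Omega}$.

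It then remains to bound $\Vert\zbold_I\Vert_{0,\Omega}\leq\Vert\zbold\Vert_{0,\Omega}+\Vert\zbold-\zbold_I\Vert_{0,\Omega}$. Proposition \ref{prop:interpolation:edge} gives, elementwise,
\begin{equation*}
\Vert\zbold-\zbold_I\Vert_{0,\E}\lesssim \hE^s\vert\zbold\vert_{s,\E}+\hE\Vert\curlbold\vboldh\Vert_{0,\E}+\hE^{s+1}\vert\curlbold\vboldh\vert_{s,\E},
\end{equation*}
and the critical last term is absorbed by applying Lemma \ref{lemma:inverse-estimate-face} to $\curlbold\vboldh\in\Vboldfaceh$, which yields $\hE^{s+1}\vert\curlbold\vboldh\vert_{s,\E}\lesssim \hE\Vert\curlbold\vboldh\Vert_{0,\E}$. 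Summing over elements and invoking the regularity of $\zbold$ produces $\Vert\zbold-\zbold_I\Vert_{0,\Omega}\lesssim\Vert\curlbold\vboldh\Vert_{0,\Omega}$, and the stability \eqref{stability:face} identifies $\Vert\curlbold\vboldh\Vert_{0,\Omega}\approx\Vertiii{\curlbold\vboldh}_{\FACE}$, closing the estimate.

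The main obstacle is precisely the term $\hE^{s+1}\vert\curlbold\zbold\vert_{s,\E}$, which has no reason to be controlled by $\Vert\curlbold\vboldh\Vert_{0,\Omega}$ without using virtual element-specific structure; it is here that the freshly proved Lemma \ref{lemma:inverse-estimate-face} plays the decisive role. A secondary subtlety is the consistent choice of the exponent $s>1/2$, which must work simultaneously for the Costabel regularity of $\zbold$ and the discrete regularity of $\curlbold\vboldh$ from Lemma \ref{lemma:inverse-estimate-face}; this is arranged by taking $s$ as the minimum of the two admissible exponents.
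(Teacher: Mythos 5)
Your proof is correct and follows essentially the same strategy as the paper's: a Helmholtz-type splitting of $\vboldh$, the orthogonality defining $\Zboldh$ to replace $\vboldh$ by the edge interpolant of its divergence-free part, the interpolation estimates of Proposition~\ref{prop:interpolation:edge}, the inverse estimate of Lemma~\ref{lemma:inverse-estimate-face} to absorb the critical term $\hE^{s+1}\vert\curlbold\vboldh\vert_{s,\E}$, and the continuous Friedrichs/embedding results of Amrouche et al. The only (harmless) deviations are that you take the scalar potential in $H^1_0(\Omega)$, so that $\zbold$ has vanishing tangential rather than normal trace, whereas the paper solves a Neumann problem, and that you identify $\vboldh-\zbold_I$ as a discrete gradient via the commuting diagram for the curl together with the exactness of the sequence, rather than via the identity $(\nabla p)_I=\nabla p_I$.
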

\begin{proof}
 Given~$\vboldh \in \Zboldh$, let~$p$ be the solution to the following problem:
\begin{align*}
\begin{cases}
\text{find } p \in H^1(\Omega) \text{ such that } \int_\Omega p = 0 \text{ and} \\
(\nabla p, \nabla \xi)_{0,\Omega} = (\vboldh, \nabla \xi)_{0,\Omega}\quad \forall \xi\in H^1(\Omega).
\end{cases}
\end{align*}
  Set~$\vbold:= \vboldh - \nabla p$. We clearly have that
  \begin{equation} \label{properties:function:v}
    \div \vbold =0,\quad\quad \curlbold \vbold = \curlbold \vboldh,\quad \quad \nboldOmega \cdot \vbold _{|\partial \Omega} = 0.
  \end{equation}
For a~$1/2<s \le 1$ depending on the shape of~$\Omega$, using~\cite[Proposition~$3.7$]{amrouche1998vector} gives
\begin{equation} \label{using:Amrouche-Friedrichs}
\Vert \vbold \Vert_{s,\Omega} \lesssim \Vert \curlbold \vbold \Vert_{0,\Omega}  \overset{\eqref{properties:function:v}}{=} \Vert \curlbold \vboldh \Vert_{0,\Omega}.
\end{equation}
Denote the nodal interpolant of~$p$ by~$p_I$; see~\eqref{interpolant:nodal}.
As in Remark~\ref{remark:interpolation-exact}, the edge interpolant of~$\nabla p$
in the sense of~\eqref{interpolant:edge} is such that $(\nabla p)_I = \nabla p_I.$
Therefore, the edge interpolant of~$\vbold$ in the sense of~\eqref{interpolant:edge} satisfies
\begin{align*}
\vboldI = \vboldh - (\nabla p)_I =  \vboldh - \nabla p_I.
\end{align*}
  Next, recalling that~$\vboldh \in \Zboldh$, we observe that
  \begin{align*}
    \Vertiii{\vboldh}^2_{\EDGE} = [\vboldh, \vboldh]_{\EDGE} = [\vboldh, \vboldI + \nabla p_I]_{\EDGE}
    = [\vboldh, \vboldI]_{\EDGE} \le \Vertiii{\vboldh}_{\EDGE}  \Vertiii{\vboldI}_{\EDGE}.
  \end{align*}
  We deduce that
  \begin{equation} \label{estimate:vboldh}
    \Vertiii{\vboldh}_{\EDGE}   \le \Vertiii{\vboldI}_{\EDGE}
    \overset{\eqref{stability:edge}}{\lesssim} \Vert \vboldI \Vert_{0,\Omega}.
  \end{equation}
We estimate from above the right-hand side of~\eqref{estimate:vboldh} elementwise. Let~$\E \in \taun$.
Using the triangle inequality and~\eqref{interpolation-estimates:edge}, we write:
  \begin{equation*}
    \begin{split}
      \Vert \vboldI \Vert_{0,\E}
      & \lesssim \Vert \vbold \Vert_{0,\E} + \hE^s \vert \vbold \vert_{s,\E} 
      + \hE \Vert \curlbold \vbold \Vert_{0,\E} + \hE^{s+1} \vert \curlbold \vbold \vert_{s,\E}\\
      & \overset{\eqref{properties:function:v}}{=} \Vert \vbold \Vert_{0,\E} + \hE^s \vert \vbold \vert_{s,\E} 
      + \hE \Vert \curlbold \vboldh \Vert_{0,\E} +\hE^{s+1} \vert \curlbold \vboldh\vert_{s,\E}.\\
    \end{split}
  \end{equation*}
We know that~$\curlbold \vboldh$ belongs to~$\VboldfaceE$; see~\eqref{exact:sequence:VEM}.
Therefore, we can apply the inverse estimate~\eqref{inverse-estimate-face}, possibly taking the minimum among the scalar $s$ in \eqref{using:Amrouche-Friedrichs}
and the minimum over all elements of the parameter~$s$ appearing in Lemma~\ref{lemma:inverse-estimate-face}, and find that
  \begin{equation} \label{estimate:vboldI}
    \Vert \vboldI \Vert_{0,\E}
    \lesssim \Vert \vbold \Vert_{0,\E} + \hE^s \vert \vbold \vert_{s,\E}  + \hE \Vert \curlbold \vboldh \Vert_{0,\E}.
  \end{equation}
  Inserting~\eqref{estimate:vboldI} in~\eqref{estimate:vboldh} and
  summing over all mesh elements yield
  \begin{equation*}
    \begin{split}
      \Vert \vboldh \Vert_{0,\Omega} \overset{\eqref{stability:edge}}{\lesssim}
      \Vertiii{\vboldh}_{\EDGE} 
      \lesssim \Vert \vbold \Vert_{0,\Omega} + \vert \vbold \vert_{s,\Omega} + \Vert \curlbold \vboldh \Vert_{0,\Omega}
      \overset{\eqref{using:Amrouche-Friedrichs}}{\lesssim} \Vert \curlbold \vboldh \Vert_{0,\Omega}
      \overset{\eqref{stability:face}}{\lesssim} \Vertiii{\curlbold \vboldh} _{\FACE},
    \end{split}
  \end{equation*}
which is the assertion.
  \qed
\end{proof}
\medskip

We are in the position of proving the approximation properties of the projector~$\Piboldh$.
\medskip

\begin{prop}
Let~$\Ebold\in H^s(\curlbold,\Omega)$, $1/2<s\le1$, and $\Piboldh$ be the projector defined in~\eqref{curl-projector-edge}.
Then, the following inequality is valid:
\begin{equation} \label{approximation:Piboldh}
    \Vert \Ebold - \Piboldh \Ebold  \Vert_{\curlbold,\Omega} \lesssim 
    \h^s (\vert \Ebold \vert_{s,\Omega} + h^{1-s} \Vert \curlbold \Ebold \Vert_{0,\Omega} + \h \vert \curlbold \Ebold \vert_{s,\Omega}).
  \end{equation}
\end{prop}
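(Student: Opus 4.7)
\medskip

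\noindent
\textbf{Proof plan.} The strategy is a Strang-type split combined with a discrete Helmholtz decomposition. Write
\[
\Ebold - \Piboldh \Ebold = (\Ebold - \EboldI) + (\EboldI - \Piboldh \Ebold) =: (\Ebold - \EboldI) + \boldsymbol\delta_h,
\]
where $\EboldI \in \Vboldedgeh$ is the edge interpolant of $\Ebold$. The interpolation term on the right-hand side is controlled directly by Proposition~\ref{prop:interpolation:edge}, recalling that $\curlbold \EboldI = (\curlbold \Ebold)_I$ by the commuting diagram. The main task is therefore to bound $\boldsymbol\delta_h \in \Vboldedgeh$ in both the $L^2$ and the $\curlbold$ norms.

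First I would estimate $\Vert \curlbold \boldsymbol\delta_h\Vert_{0,\Omega}$. Using the first equation in~\eqref{curl-projector-edge} with $\wboldh = \boldsymbol\delta_h$, and noting that $\curlbold \EboldI = (\curlbold \Ebold)_I \in \Vboldfaceh$, I obtain
\[
\Vertiii{\curlbold \boldsymbol\delta_h}_{\FACE}^2
\approx
[\muhat^{-1}\curlbold \EboldI, \curlbold \boldsymbol\delta_h]_{\FACE} - (\mu^{-1}\curlbold \Ebold, \curlbold \boldsymbol\delta_h)_{0,\Omega}.
\]
Inserting the piecewise constant $\pbold_0 := \Pizboldh(\curlbold \Ebold)$, applying the face-form consistency~\eqref{consistency:face} to the constant piece, and using the Lipschitz bound $|\mu^{-1} - \muhat^{-1}| \lesssim h$ coming from~\eqref{assumption:coefficients}, the face-form stability~\eqref{stability:face}, and the interpolation estimate on $\curlbold \Ebold - (\curlbold \Ebold)_I$, I expect
\[
\Vertiii{\curlbold \boldsymbol\delta_h}_{\FACE} \lesssim h^s |\curlbold \Ebold|_{s,\Omega} + h \Vert \curlbold \Ebold\Vert_{0,\Omega}.
\]

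Next I would bound $\Vert \boldsymbol\delta_h \Vert_{0,\Omega}$ via a discrete Helmholtz decomposition $\boldsymbol\delta_h = \boldsymbol\delta_h^0 + \nabla p_h$ that is orthogonal with respect to $[\cdot,\cdot]_{\EDGE}$, with $\boldsymbol\delta_h^0 \in \Zboldh$ and $p_h \in \Vnodeh$. The kernel component is handled by Lemma~\ref{lemma:coercivity-kernel}:
\[
\Vert \boldsymbol\delta_h^0\Vert_{0,\Omega} \lesssim \Vertiii{\curlbold \boldsymbol\delta_h^0}_{\FACE} = \Vertiii{\curlbold \boldsymbol\delta_h}_{\FACE},
\]
which is already controlled. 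For the gradient component, the orthogonality gives $[\nabla p_h,\nabla p_h]_{\EDGE} = [\boldsymbol\delta_h,\nabla p_h]_{\EDGE}$; then the second equation in~\eqref{curl-projector-edge} with $\sh = p_h$, together with the consistency~\eqref{consistency:edge} on $\pbold_0 := \Pizboldh\Ebold$, yields
\[
\Vert \nabla p_h\Vert_{0,\Omega}^2 \approx [\boldsymbol\delta_h,\nabla p_h]_{\EDGE} = -(\Ebold - \pbold_0,\nabla p_h)_{0,\Omega} + [\EboldI - \pbold_0,\nabla p_h]_{\EDGE},
\]
which, via Cauchy--Schwarz, stability~\eqref{stability:edge}, polynomial $L^2$ approximation, and Proposition~\ref{prop:interpolation:edge}, is bounded by the target right-hand side of~\eqref{approximation:Piboldh} times $\Vert\nabla p_h\Vert_{0,\Omega}$. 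Summing the two contributions completes the estimate.

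The main obstacle I expect is the second step: bookkeeping the discrete Helmholtz decomposition and properly exploiting the coercivity of Lemma~\ref{lemma:coercivity-kernel}, which is the only available handle to bound the non-gradient (kernel) part of $\boldsymbol\delta_h$ in $L^2$ by its curl. A secondary technicality lies in carefully absorbing the discrepancy between $\mu^{-1}$ and the piecewise-constant surrogate $\muhat^{-1}$, which generates the $h\Vert\curlbold\Ebold\Vert_{0,\Omega}$ term appearing in~\eqref{approximation:Piboldh}.
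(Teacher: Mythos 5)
Your argument is correct, and it reaches the same two consistency terms that the paper estimates, by essentially the same calculations: your curl bound (testing the first equation of~\eqref{curl-projector-edge} with $\curlbold\boldsymbol\delta_h$, inserting $\Pizboldh(\curlbold\Ebold)$, and invoking~\eqref{consistency:face}, \eqref{stability:face}, \eqref{assumption:coefficients}) reproduces the paper's term $\big[I\big]$, and your gradient bound reproduces term $\big[II\big]$. Where you genuinely diverge is in how the stability of the discrete problem is exploited. The paper observes that $\Piboldh\Ebold$ solves the mixed problem~\eqref{mixed:problem:projector} with multiplier $\ph=0$, checks coercivity on $\Zboldh$ (Lemma~\ref{lemma:coercivity-kernel}) and the inf-sup condition for $[\cdot,\nabla\cdot]_{\EDGE}$ (immediate from exactness), and then imports the abstract Babu\v{s}ka--Brezzi quasi-optimality to produce test functions $\wboldh,\sh$ on the unit ball against which the consistency terms are measured. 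You instead make that stability argument by hand: an explicit discrete Helmholtz splitting $\boldsymbol\delta_h=\boldsymbol\delta_h^0+\nabla p_h$, orthogonal in $[\cdot,\cdot]_{\EDGE}$, with the kernel part controlled by Lemma~\ref{lemma:coercivity-kernel} (using $\curlbold\boldsymbol\delta_h^0=\curlbold\boldsymbol\delta_h$) and the gradient part by Galerkin orthogonality plus the second projector equation. The two routes rest on exactly the same pillars---the exact sequence~\eqref{exact:sequence:VEM-edge} (which makes your decomposition well defined and gives the paper its inf-sup) and the kernel coercivity---so neither is stronger; yours is more self-contained and makes visible why each piece of $\boldsymbol\delta_h$ is small, while the paper's is shorter because the mixed-method machinery does the bookkeeping. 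One point worth stating explicitly in a write-up: $p_h$ is well defined and the splitting is legitimate because $\nabla\Vnodeh\subset\Vboldedgeh$ and $[\nabla\cdot,\nabla\cdot]_{\EDGE}$ is an inner product on $\nabla\Vnodeh$ by~\eqref{stability:edge}.
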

%%%
\begin{proof}
Let $\EboldI$ denote the interpolant of~$\Ebold$ in the edge space~$\Vboldedgeh$; cf.~\eqref{interpolant:edge}.
Then, for a given~$\Ebold \in H(\curlbold,\Omega)$, $\Piboldh \Ebold$ is the solution to the following mixed variational problem:
  \begin{equation} \label{mixed:problem:projector}
    \begin{cases}
      [\muhat^{-1} \curlbold(\Piboldh \Ebold), \curlbold \wboldh]_{\FACE} 
      + [\wboldh, \nabla \ph]_{\EDGE}
      = (\mu^{-1} \curlbold \Ebold , \curlbold \wboldh)_{0,\Omega},\\[0.5em]
      [\Piboldh \Ebold, \nabla \sh]_{\EDGE} = (\Ebold, \nabla \sh)_{0,\Omega}
      \qquad \forall\wboldh\in \Vboldedgeh,\;\;\forall\sh\in\Vnodeh.
    \end{cases}
  \end{equation}
Indeed, it can be easily shown that~$\ph = 0$ and the coercivity of the bilinear form~$[\curlbold \cdot, \curlbold \cdot]_{\FACE}$ on the kernel~$\Zboldh$ is shown in Lemma~\ref{lemma:coercivity-kernel}.
  On the other hand, the discrete inf-sup condition for the bilinear
  form $[\cdot,\nabla\cdot]_{\EDGE}:
  \Vboldedgeh\times\Vnodeh\rightarrow\Rbb$ is a trivial consequence of
  the fact that the virtual element spaces under consideration form an
  exact sequence.
Therefore, we can use the standard analysis for mixed problems; see, e.g., \cite{BrezziFortin}.
Notably, there exist~$\wboldh$ and~$\sh$ in~$\Vboldedgeh \times \Vnodeh$ with
\begin{equation} \label{living-on-the-ball}
    \Vert \wboldh \Vert_{\curlbold,\Omega} + \Vert \sh \Vert_{1,\Omega} \le 1,
\end{equation}
such that 
  \begin{equation*}
    \begin{split}
      & \Vert \Piboldh \Ebold - \EboldI \Vert_{\curlbold,\Omega} =  \Vert \Piboldh \Ebold - \EboldI \Vert_{\curlbold,\Omega} + \Vert \ph \Vert_{1,\Omega}\\[0.5em]
      &\qquad
      \lesssim [\muhat^{-1} \curlbold(\Piboldh \Ebold - \EboldI), \curlbold \wboldh]_{\FACE}  
      + [\wboldh, \nabla \ph]_{\EDGE} 
      + [\Piboldh \Ebold - \EboldI, \nabla \sh]_{\EDGE}\\[0.5em]
      &\qquad
      \overset{\ph=0}{=}        [\muhat^{-1} \curlbold(\Piboldh \Ebold - \EboldI), \curlbold \wboldh]_{\FACE}  
      + [\Piboldh \Ebold - \EboldI, \nabla \sh]_{\EDGE} \\[0.5em]
      &\qquad
      \overset{\eqref{curl-projector-edge}}{=}
      (\mu^{-1} \curlbold \Ebold, \curlbold \wboldh)_{0,\Omega}
      - [\muhat^{-1} \curlbold \EboldI, \curlbold \wboldh]_{\FACE} 
      +(\Ebold , \nabla \sh)_{0,\Omega} - [\EboldI, \nabla \sh]_{\EDGE} =: \big[I\big] + \big[II\big].
    \end{split}
  \end{equation*}
  Since the bounds for the two terms on the right-hand side follow
  using standard VE calculations, we address them briefly.
Recall the definition of the projector~$\Pizboldh$ in~\eqref{L2-projector}. As for the term~$\big[I\big]$, we get
\begin{align*}
\big[I\big]
& \overset{\hspace{3mm}\eqref{consistency:face}\hspace{3mm}}{=} \sum_{\E \in \taun} \bigg( (\mu^{-1}(\curlbold \Ebold - \Pizboldh (\curlbold \Ebold)), \curlbold \wboldh)_{0,\E}
    -[\muhat^{-1}(\curlbold\EboldI - \Pizboldh(\curlbold\Ebold)), \curlbold \wboldh]_{\FACE,\E}
    \\[0.5em]
    & \quad\quad\quad\quad\quad\quad
    + ((\mu^{-1}-\muhat^{-1})\Pizboldh (\curlbold \Ebold) , \curlbold \wboldh)_{0,\E}
    \bigg)
    \\[0.5em]
    &
    \overset{\eqref{stability:face},\eqref{living-on-the-ball}}{\lesssim}
    \Bigg(
    \sum_{\E \in \taun}
    \bigg( \Vert \curlbold \Ebold - \Pizboldh (\curlbold\Ebold)\Vert_{0,\E} ^2
    + \Vert \curlbold(\Ebold - \EboldI) \Vert_{0,\E}^2
    \\[0.5em]
    & \quad\quad\quad\quad\quad\quad
    + \Vert \mu^{-1} - \muhat^{-1} \Vert_{L^\infty(\E)}
    \Vert \Pizboldh (\curlbold \Ebold) \Vert_{0,\E}^2
    \bigg)
    \Bigg)^{\frac12}
    \\[0.5em]
    &
    \overset{\hspace{2mm}\eqref{assumption:coefficients},\eqref{interpolation-estimates:edge}}{\lesssim}
    \h^s \vert \curlbold \Ebold \vert_{s,\Omega}
    + \h \max_{\E \in \taun}\vert \mu^{-1} \vert_{W^{1,\infty}(\E)}^2 \Vert \curlbold \Ebold \Vert_{0,\Omega}.
  \end{align*}
  We proceed similarly for the term~$\big[II\big]$:
\begin{align*}
& \big[II\big] \overset{\hspace{3mm}\eqref{consistency:edge}\hspace{3mm}}{=} \sum_{\E \in \taun} \left( (\Ebold - \Pizboldh \Ebold, \nabla \sh)_{0,\Omega} - [\EboldI - \Pizboldh \Ebold, \nabla\sh]_{\EDGE} \right) \\[0.5em]
&\overset{\eqref{stability:face},\eqref{living-on-the-ball}}{\lesssim} \left(\sum_{\E \in \taun} \left( \Vert \Ebold - \Pizboldh \Ebold \Vert_{0,\E}^2 + \Vert \Ebold - \EboldI \Vert_{0,\E}^2 \right) \right)^{\frac12} \overset{\hspace{3mm}\eqref{interpolation-estimates:edge}\hspace{3mm}}{\lesssim} \h^s\left( \vert \Ebold \vert_{s,\Omega} +\h^{1-s} \Vert\curlbold \Ebold \Vert_{0,\Omega}  + \h \vert \curlbold \Ebold \vert_{s,\Omega} \right).
\end{align*}
The assertion follows easily collecting the bounds on the terms~$\big[I\big]$, $\big[II\big]$ and by the triangle inequality.
  \qed
  %%\end{pf}
\end{proof}
\medskip

Next, define the weighted, global projector~$\Pboldh : [L^2(\Omega)]^3
\rightarrow \Vtildeboldfaceh$ as
\begin{equation} \label{L2-projector-face}
  [\muhat^{-1} \Pboldh \Bbold, \psiboldh]_{\FACE} =
  (\mu^{-1}\Bbold, \psiboldh)_{0,\Omega}
  \quad\quad \forall \Bbold\in [L^2(\Omega)]^3,\,\psiboldh\in\Vtildeboldfaceh.
\end{equation}
As, e.g., in~\cite{zhao2004analysis}, a crucial point in the analysis
of the semi-discrete scheme in Section~\ref{section:semi-discrete}
below is the following commuting diagram result.
\medskip

\begin{prop} \label{prop:commuting-diagram}
  Let~$\Piboldh$ and~$\Pboldh$ be the two projectors introduced
  in~\eqref{curl-projector-edge} and~\eqref{L2-projector-face},
  respectively.
  Then, the following identity is valid:
  \begin{equation} \label{commuting-diagram}
    \curlbold(\Piboldh \Ebold) = \Pboldh(\curlbold \Ebold)
    \quad\quad \forall \Ebold\in H(\curlbold, \Omega).
  \end{equation}
\end{prop}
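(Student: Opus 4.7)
The plan is to exploit the exact sequence property, namely $\Vtildeboldfaceh = \curlbold(\Vboldedgeh)$ from \eqref{exact:sequence:VEM}, which lets one identify the test space used to define $\Pboldh$ with curls of edge virtual element functions.

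First I would observe that both sides of \eqref{commuting-diagram} live in $\Vtildeboldfaceh$. Indeed, $\Pboldh(\curlbold \Ebold) \in \Vtildeboldfaceh$ by its very definition in \eqref{L2-projector-face}. On the other hand, $\Piboldh \Ebold \in \Vboldedgeh$, so $\curlbold(\Piboldh \Ebold) \in \curlbold(\Vboldedgeh) = \Vtildeboldfaceh$ by \eqref{exact:sequence:VEM}. Hence it suffices to test the difference against an arbitrary element of $\Vtildeboldfaceh$ and invoke the positivity of $[\muhat^{-1}\cdot,\cdot]_{\FACE}$, which follows from \eqref{stability:face} together with the lower bound $\mu \le \mu^\star$ in \eqref{eq:param:bound}.

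The core of the argument is then a short calculation. Fix any $\psiboldh \in \Vtildeboldfaceh$. By \eqref{exact:sequence:VEM} there exists $\wboldh \in \Vboldedgeh$ with $\psiboldh = \curlbold \wboldh$. Applying first the defining relation \eqref{L2-projector-face} of $\Pboldh$ and then the first line of \eqref{curl-projector-edge} defining $\Piboldh$,
\begin{equation*}
[\muhat^{-1}\Pboldh(\curlbold \Ebold),\psiboldh]_{\FACE}
= (\mu^{-1}\curlbold\Ebold,\curlbold\wboldh)_{0,\Omega}
= [\muhat^{-1}\curlbold(\Piboldh \Ebold),\curlbold\wboldh]_{\FACE}
= [\muhat^{-1}\curlbold(\Piboldh \Ebold),\psiboldh]_{\FACE}.
\end{equation*}
Consequently, setting $\psiboldh := \curlbold(\Piboldh \Ebold) - \Pboldh(\curlbold\Ebold) \in \Vtildeboldfaceh$ and using the coercivity mentioned above yields $\psiboldh = \mathbf 0$, which is exactly \eqref{commuting-diagram}.

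I do not expect any real obstacle: the argument is essentially a one-line tautology once one recognises that the test space in \eqref{L2-projector-face} is precisely the image of $\curlbold$ on $\Vboldedgeh$, and that the constraint involving $\nabla \sh$ in the second line of \eqref{curl-projector-edge} plays no role here (it only fixes the gradient part of $\Piboldh\Ebold$, which lies in $\ker\curlbold$). The only point requiring any care is to confirm the inclusions $\curlbold(\Piboldh\Ebold),\,\Pboldh(\curlbold\Ebold)\in\Vtildeboldfaceh$ so that they are legitimate test functions against each other.
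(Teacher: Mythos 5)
Your proposal is correct and follows essentially the same route as the paper: both arguments note that $\curlbold(\Piboldh\Ebold)$ and $\Pboldh(\curlbold\Ebold)$ lie in $\Vtildeboldfaceh$, use the exact sequence \eqref{exact:sequence:VEM} to identify test functions $\psiboldh\in\Vtildeboldfaceh$ with $\curlbold\wboldh$, chain the defining relations \eqref{curl-projector-edge} and \eqref{L2-projector-face}, and conclude via the stability property \eqref{stability:face}. The only cosmetic quibble is that the positivity of $\muhat^{-1}$ comes from the \emph{upper} bound $\mu\le\mu^\star$ in \eqref{eq:param:bound}, not a lower bound as you phrase it.
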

\begin{proof}
Recall that~$\curlbold(\Piboldh\Ebold)$ belongs to~$\Vtildeboldfaceh$; see~\eqref{commuting-diagram}.  
Using~\eqref{exact:sequence:VEM}, \eqref{curl-projector-edge}, and~\eqref{L2-projector-face}, we get
\begin{align*}
    [\muhat^{-1} \curlbold(\Piboldh \Ebold), \psiboldh]_{\FACE} =
    (\mu^{-1} \curlbold \Ebold, \psiboldh)_{0,\Omega} =
    [\muhat^{-1} \Pboldh (\curlbold \Ebold), \psiboldh]_{\FACE}
    \quad \forall \psiboldh \in \Vtildeboldfaceh .
\end{align*}
The assertion follows using the stability property~\eqref{stability:face}.  \qed
\end{proof}
\medskip

In the light of commuting diagram~\eqref{commuting-diagram}, the projector~$\Pboldh$ satisfies the following property.

\medskip

\begin{lem} \label{lem:property-Pboldh}
  Let~$\Pboldh$ be defined in~\eqref{L2-projector-face}.
  Then, for all divergence free~$\psibold \in H^s(\Omega)$, $1/2<s\le 1$, the following bound is valid:
  \begin{equation} \label{property-Pboldh}
    \Vert \psibold - \Pboldh \psibold \Vert_{0,\Omega} 
    \lesssim \h \Vert \psibold \Vert_{0,\Omega} +  \h^s \vert \psibold \vert_{s,\Omega}.
  \end{equation}
\end{lem}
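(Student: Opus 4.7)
The plan is to combine the commuting diagram of Proposition~\ref{prop:commuting-diagram} with a regular vector-potential representation of the divergence-free field $\psibold$. First, I would invoke the classical result (see, e.g., \cite{amrouche1998vector}) that on a Lipschitz polyhedron (with the usual mild topological assumptions), any divergence-free field in $[L^2(\Omega)]^3$ admits a vector potential $\Ebold\in [H^1(\Omega)]^3$ with $\curlbold\Ebold=\psibold$ and $|\Ebold|_{1,\Omega}\lesssim\Vert\psibold\Vert_{0,\Omega}$. Using Proposition~\ref{prop:commuting-diagram}, we then have
\[
\psibold - \Pboldh\psibold \;=\; \curlbold\Ebold - \curlbold(\Piboldh\Ebold) \;=\; \curlbold(\Ebold - \Piboldh\Ebold),
\]
so the target reduces to controlling $\Vert\curlbold(\Ebold-\Piboldh\Ebold)\Vert_{0,\Omega} \le \Vert\Ebold-\Piboldh\Ebold\Vert_{\curlbold,\Omega}$ by the right-hand side of~(\ref{property-Pboldh}).

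A direct application of~(\ref{approximation:Piboldh}) with a common regularity exponent $s\in(1/2,1]$ for both $\Ebold$ and $\curlbold\Ebold=\psibold$ would only give a term $h^s\Vert\psibold\Vert_{0,\Omega}$, which is suboptimal for $s<1$. To get around this, I would revisit the proof of~(\ref{approximation:Piboldh}) and track the regularities of $\Ebold$ and of $\curlbold\Ebold$ independently: the term $[I]$ in that proof sees only the regularity of $\curlbold\Ebold$, whereas $[II]$ sees the regularity of $\Ebold$ via the edge-interpolation bound~(\ref{interpolation-estimates:edge}). Keeping $\Ebold\in H^1$ and $\curlbold\Ebold\in H^s$ produces the refined bound
\[
\Vert\Ebold-\Piboldh\Ebold\Vert_{\curlbold,\Omega} \;\lesssim\; h\,|\Ebold|_{1,\Omega} + h\,\Vert\curlbold\Ebold\Vert_{0,\Omega} + h^s|\curlbold\Ebold|_{s,\Omega}.
\]
Substituting $\curlbold\Ebold=\psibold$ and $|\Ebold|_{1,\Omega}\lesssim\Vert\psibold\Vert_{0,\Omega}$ yields the claimed inequality.

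The main obstacles are the two auxiliary ingredients rather than the final computation: (i) securing the $H^1$-regular vector potential with the stated stability estimate, which rests on classical results under the usual topological hypotheses on $\Omega$ implicit in this Maxwell setting; and (ii) the mixed-regularity version of~(\ref{approximation:Piboldh}), which is not stated explicitly in the paper, so one must revisit the proof of that proposition and verify that its two building blocks $[I]$ and $[II]$ genuinely decouple the regularity demands on $\Ebold$ and on $\curlbold\Ebold$.
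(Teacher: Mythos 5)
Your route is genuinely different from the paper's and is plausible in outline, but it rests on two auxiliary facts that the paper never establishes, whereas the paper's own argument needs neither the vector potential nor the projector~$\Piboldh$. The paper proceeds directly: it inserts the \emph{face} interpolant~$\psiboldI$ of~\eqref{interpolant:face} by the triangle inequality, observes that $\div\psibold=0$ forces $\psiboldI\in\Vtildeboldfaceh$ (commuting property of Remark~\ref{remark:interpolation-exact}), so that $\psiboldh:=\psiboldI-\Pboldh\psibold$ is an admissible test function in~\eqref{L2-projector-face}; a standard consistency/stability computation (add and subtract $\Pizboldh\psibold$, use~\eqref{consistency:face} and~\eqref{stability:face}) then bounds $\Vert\psiboldI-\Pboldh\psibold\Vert_{0,\Omega}$ by the sum of $\Vert\psibold-\Pizboldh\psibold\Vert_{0,\Omega}$, $\Vert\psibold-\psiboldI\Vert_{0,\Omega}$, and $\Vert\muhat^{-1}-\mu^{-1}\Vert_{L^\infty}\Vert\psibold\Vert_{0,\Omega}$, and~\eqref{interpolation-estimates:face} together with~\eqref{assumption:coefficients} gives exactly $\h\Vert\psibold\Vert_{0,\Omega}+\h^s\vert\psibold\vert_{s,\Omega}$. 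In particular, the term $\h\Vert\psibold\Vert_{0,\Omega}$ is simply the coefficient-approximation error $\mu^{-1}-\muhat^{-1}$, not the outcome of a delicate mixed-regularity interpolation bound.

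The two obstacles you flag are real and are not resolved in your write-up. (i) The $H^1$-bounded potential with $\curlbold\Ebold=\psibold$ and $\vert\Ebold\vert_{1,\Omega}\lesssim\Vert\psibold\Vert_{0,\Omega}$ requires topological hypotheses on $\Omega$ (connected boundary, or vanishing fluxes through the inner boundary components) that the paper does not assume; the lemma is stated for \emph{all} divergence-free $\psibold\in H^s(\Omega)$ with no normal-trace condition, so this is not automatic. (ii) The mixed-regularity refinement of~\eqref{approximation:Piboldh} is deeper than ``revisit the proof of that proposition'': the term $\big[II\big]$ there is controlled through the edge interpolation estimate~\eqref{interpolation-estimates:edge}, which is only \emph{cited} from \cite{face-edge-VEM-interpolation-low} and, as stated, ties the regularity of $\vbold$ and of $\curlbold\vbold$ to a single exponent $s$; you would have to reopen that external proof as well to confirm that the exponents genuinely decouple. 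Without (ii) you only obtain the suboptimal term $\h^s\Vert\psibold\Vert_{0,\Omega}$, as you correctly note. So the plan could likely be completed, but it converts a short self-contained computation into one resting on two unproven ingredients; the direct argument with the face interpolant is the better choice here.
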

%%%
\begin{proof}
Let~$\Pizboldh$ be the projector defined in~\eqref{L2-projector}
and~$\psiboldI$ the interpolant in $\Vboldedgeh$ of~$\psibold$; see~\eqref{interpolant:face}.  Then, we can write
  \begin{align*}
    \Vert \psibold - \Pboldh \psibold \Vert_{0,\Omega}^2 \lesssim \Vert \psibold - \psiboldI \Vert_{0,\Omega}^2 
    + \Vert \psiboldI - \Pboldh \psibold \Vert_{0,\Omega}^2.
  \end{align*}
Since~$\div \psibold=0$, we have that $\psiboldI \in \Vtildeboldfaceh$ and thus also 
$(\psiboldI - \Pboldh \psibold) \in\Vtildeboldfaceh$; see definition~\eqref{L2-projector-face}.
We focus on the second term on the right-hand side:
\begin{equation*}
\begin{split}
\Vert \psiboldI - 
& \Pboldh \psibold \Vert_{0,\Omega}^2 \overset{\eqref{stability:face}}{\lesssim} \sum_{\E \in \taun} \Vertiii{\muhat^{-\frac12} (\psiboldI - \Pboldh \psibold)}_{\FACE,\E}^2\\
& \overset{\eqref{L2-projector-face}}{=} 
\sum_{\E \in \taun} \{ [\muhat^{-1}\psiboldI, \psiboldI - \Pboldh \psibold]_{\FACE,\E}
-(\mu^{-1}\psibold, \psiboldI-\Pboldh\psibold)_{0,\E}  \} \\
& \overset{\eqref{consistency:face}}{=}
\sum_{\E \in \taun} \{ [\muhat^{-1}(\psiboldI - \Pizboldh \psibold), \psiboldI - \Pboldh \psibold]_{\FACE,\E}
-(\mu^{-1}(\psibold - \Pizboldh \psibold), \psiboldI-\Pboldh\psibold)_{0,\E}\\
& \quad \quad \quad \quad \quad + ((\muhat^{-1}-\mu^{-1})\Pizboldh \psibold, \psiboldI - \Pboldh \psibold)_{0,\E}\} \\
& \overset{\eqref{stability:face}}{\lesssim}
      \left(\sum_{\E \in \taun} \{ \Vert \psibold - \Pizboldh \psibold \Vert_{0,\E}^2 + \Vert \psibold - \psiboldI \Vert_{0,\E}^2
      + \Vert\muhat^{-1} - \mu^{-1} \Vert_{L^{\infty}(\E)}^2 \Vert \psibold \Vert_{0,\E}^2 \}\right)^{\frac12}
      \Vert \psiboldI - \Pboldh \psibold \Vert_{0,\Omega}.
    \end{split}
  \end{equation*}
  Using~\eqref{interpolation-estimates:face} and collecting the two
  above bounds, we get the assertion.  \qed
\end{proof}

\subsection{The semi-discrete scheme}  \label{subsection:semi-discrete}
%%%
We denote the virtual element interpolant of the density current vector~$\Jbold$ in~$\Vboldedgeh$ by~$\JboldI$.
In other words, $\JboldI$ is the unique function in~$\Vboldedgeh$
satisfying~\eqref{interpolant:edge}.
Similarly, we define the interpolants~$\EboldzI$ and~$\BboldzI$ of the
initial vector field~$\Ebold^0$ and~$\Bbold^0$ in~$\Vboldedgeh$
and~$\Vboldfaceh$; cf.~\eqref{interpolant:edge} and~\eqref{interpolant:face}.
The semi-discrete scheme reads
\begin{equation}
  \label{semi-discrete-scheme}
  \begin{cases}
    \text{find } (\Eboldh, \Bboldh) \in \Vboldedgeh \times \Vboldfaceh \text{ such that }\\[0.5em]
         [\varepsilonhat \Eboldht + \sigmahat \Eboldh, \wboldh]_{\EDGE}
         - [\muhat^{-1} \Bboldh, \curlbold \wboldh]_{\FACE} =
         [\JboldI, \wboldh]_{\EDGE}
         \quad \forall \wboldh\in\Vboldedgeh \\[0.5em]
         [\muhat^{-1} \Bboldht, \psiboldh]_{\FACE}
         + [\muhat^{-1} \psiboldh, \curlbold \Eboldh]_{\FACE} = 0
         \quad \forall \psiboldh\in\Vboldfaceh,
  \end{cases}
\end{equation}
where we recall that the subscript~$t$ stands for a derivative in time.

\subsection{The fully-discrete scheme}
\label{subsection:fully-discrete}
We consider the fully-discrete approximation of~\eqref{eq:Maxwell:weak} that is obtained by applying the backward Euler time-stepping scheme to the semi-discrete problem in~\eqref{semi-discrete-scheme}.
Higher order schemes in time can be built analogously.
As customary, we start by splitting the time integration interval~$[0,T]$ in~$N$ equally spaced subintervals $[\ts{n-1},\ts{n}]$ with size~$\tau=\ts{n}-\ts{n-1}$ for all~$n=1,\dots,N$.
%%
%% Consider a uniform time discretization $0=t_0< t_1< \dots < t_N =T$
%% of~$[0,T]$, with~$t_n-t_{n-1} = \tau$ for all~$n=1,\dots, N$.
%%
Moreover, let~$\Eboldh^0 =\EboldI^0\in\Vboldedgeh$
and~$\Bboldh^0=\BboldI^0\in\Vboldfaceh$ be the virtual element
interpolations of~$\Ebold^0$ and~$\Bbold^0$,
cf.~\eqref{interpolant:edge} and~\eqref{interpolant:face}, respectively
satisfying inequalities~\eqref{stability:edge}
and~\eqref{stability:face}.

Let $\Eboldh^m \in \Vboldedgeh$ and~$\Bboldh^m \in \Vboldfaceh$ be the discrete solutions at steps $m=0,\dots,N-1$.
We compute the two discrete vector fields at the time step~$\ts{m+1}$
using the implicit Euler scheme: \emph{find $\Eboldh^{m+1}\in \Vboldedgeh$
and~$\Bboldh^{m+1}\in \Vboldfaceh$ such that,
for all~$\wboldh\in\Vboldedgeh$ and~$\psiboldh\in\Vboldfaceh$,}
\begin{equation} \label{VEM:Maxwell}
  \begin{cases}
    \frac{1}{\tau} [\varepsilonhat (\Eboldh^{m+1}-\Eboldh^m), \wboldh]_{\EDGE}
    + [\sigmahat \Eboldh^{m+1}, \wboldh]_{\EDGE}
    - [\muhat^{-1} \Bboldh^{m+1}, \curlbold \wboldh]_{\FACE} = [\JboldI^{m+1},\wboldh]_{\EDGE}
    \\[1em]
    \frac{1}{\tau} [ \muhat^{-1}(\Bboldh^{m+1} -\Bboldh^m, \psiboldh)  ]_{\FACE}
    +  [\muhat^{-1} \psiboldh, \curlbold \Eboldh^{m+1}]_{\FACE} = 0.
  \end{cases}
\end{equation}
The existence and uniqueness of a solution for problem \eqref{VEM:Maxwell} follows with standard arguments.

We can simplify~\eqref{VEM:Maxwell} by rewriting the second equation as
\begin{align*}
  [\muhat^{-1}(\Bboldh^{m+1}-\Bboldh^{m} + \tau \curlbold \Eboldh^{m+1}), \psiboldh]_{\FACE} = 0
  \quad\quad \forall \psiboldh\in\Vboldfaceh.
\end{align*}
Since~$\curlbold (\Vboldedgeh)=\Vtildeboldfaceh$,
cf.~\eqref{exact:sequence:VEM}, we deduce that
\begin{equation}
  \label{wicked:identity}
  \Bboldh^{m+1} = \Bboldh^m - \tau \curlbold \Eboldh^{m+1}.
\end{equation}
Then, we substitute \eqref{wicked:identity} in the first equation of~\eqref{VEM:Maxwell} and find that
\begin{multline*}
  \hspace{2cm}
  [(\varepsilonhat+\tau \, \sigmahat) \Eboldh^{m+1}, \wboldh]_{\EDGE}
  + [\tau^2 \muhat^{-1} \curlbold \Eboldh^{m+1} , \curlbold \wboldh]_{\FACE}
  \\[0.5em]
  =[\tau \JboldI^{m+1} + \varepsilonhat \Eboldh^m, \wboldh]_{\EDGE} + [\tau \, \muhat^{-1} \Bboldh^m  , \curlbold \wboldh]_{\FACE}.
  \hspace{2cm}
\end{multline*}
This reformulation allows us to reduce the computational effort in solving~\eqref{VEM:Maxwell}.

In view of Remark~\ref{remark:interpolation-exact} and assumption~\eqref{eq:assumption:0div:initial-induction}, we find that
\begin{equation}
  \label{zero-divergence-discrete}
  \div \Bboldh^0 = \div \BboldI^0 = (\div \Bbold^0)_I=0.
\end{equation}
We use~\eqref{wicked:identity} and~\eqref{zero-divergence-discrete}, and apply the divergence operator to derive the discrete counterpart of~\eqref{eq:assumption:0div:all-times}:
\begin{align*}
  \div \Bboldh^{m} =0 \quad \quad \forall m=0,\dots,M,
\end{align*}
which implies that our scheme provides an approximation of $\Bbold$
that naturally satisfies the divergence-free constraint.

\vspace{1em}
\begin{remark}
The proposed scheme can be immediately extended to the case of general order $k > 1$ in space
by substituting the above low order spaces $\Vboldedgeh$ and $\Vboldfaceh$ with the corresponding ones from \cite{da2018family}.
The theoretical analysis would follow along the same lines as that shown below for the lowest order case.
Yet, interpolation and stability properties in high order edge and face virtual elements are work in progress.
\end{remark}

% Hey Emacs, this is -*-latex-*-

%%%%%%%%%%%%%%%%%%%%%%%%%%%%%%%%%%%%%%%%%%%%%%%%%%%%%%%%%%%%%%%%%%%%%%%%%%%
\section{Analysis of the semi-discrete scheme}
\label{section:semi-discrete}
%%%%%%%%%%%%%%%%%%%%%%%%%%%%%%%%%%%%%%%%%%%%%%%%%%%%%%%%%%%%%%%%%%%%%%%%%%%
In this section, we prove the convergence of the semi-discrete scheme~\eqref{semi-discrete-scheme}.
\medskip
\begin{thm} \label{theorem:semi-discrete:analysis}
Let~$(\Ebold,\Bbold)$ and~$(\Eboldh,\Bboldh)$ be the
solutions to~\eqref{eq:Maxwell:weak} and~\eqref{semi-discrete-scheme}
under the geometric assumptions of Section~\ref{section:meshes} and assumption~\eqref{assumption:coefficients}.
For all~$t\in[0,T]$, we assume that $\Ebold$,~$\Eboldt$ and $\Jbold$ belong to $L^1((0,T),H^s(\curlbold,\Omega))$, $1/2<s\leq1$.
Furthermore, we recall that the initial vector-valued fields~$\EboldI^0\in\Vboldedgeh$ and~$\BboldI^0\in\Vboldfaceh$ interpolate $\Ebold^0$ and~$\Bbold^0$ in the sense of~\eqref{interpolant:edge} and~\eqref{interpolant:face}, respectively.
Then, the following error estimate is valid:
\begin{equation} \label{semi-discrete:bounds}
    \Vert \Ebold(t) - \Eboldh(t) \Vert_{0, \Omega} + \Vert \Bbold(t) - \Bboldh(t) \Vert_{0,\Omega}
    \lesssim  \h^s \quad \quad \forall t \in [0,T].
\end{equation}
\end{thm}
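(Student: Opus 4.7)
The plan is to split the error via the VE projectors $\Piboldh$ and $\Pboldh$ from~\eqref{curl-projector-edge} and~\eqref{L2-projector-face}:
\[
\Ebold - \Eboldh = (\Ebold - \Piboldh \Ebold) + (\Piboldh \Ebold - \Eboldh)
=: \rho_{\Ebold} + \theta_{\Ebold},
\quad
\Bbold - \Bboldh = (\Bbold - \Pboldh \Bbold) + (\Pboldh \Bbold - \Bboldh)
=: \rho_{\Bbold} + \theta_{\Bbold}.
\]
The projection parts $\rho_{\Ebold}(t)$ and $\rho_{\Bbold}(t)$ are directly $O(h^s)$ in $L^2(\Omega)$ by~\eqref{approximation:Piboldh} and Lemma~\ref{lem:property-Pboldh} together with the assumed regularity, so the whole work reduces to controlling the discrete errors $\theta_{\Ebold}$ and $\theta_{\Bbold}$.

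Before testing, I would observe that $\theta_{\Bbold}(t) \in \Vtildeboldfaceh$ for all $t$: indeed $\Pboldh \Bbold$ lies in $\Vtildeboldfaceh$ by construction, and $\Bboldh(t) \in \Vtildeboldfaceh$ because the initial datum $\BboldI^0$ is divergence-free (commuting diagram of Remark~\ref{remark:interpolation-exact}) and the second equation of~\eqref{semi-discrete-scheme}, tested on the $[\muhat^{-1}\cdot,\cdot]_{\FACE}$-orthogonal complement of $\Vtildeboldfaceh$ in $\Vboldfaceh$, forces $\Bboldht \in \Vtildeboldfaceh$. Now, substituting $\Eboldh = \Piboldh \Ebold - \theta_{\Ebold}$ and $\Bboldh = \Pboldh \Bbold - \theta_{\Bbold}$ in the first equation of~\eqref{semi-discrete-scheme} and testing with $w_h = \theta_{\Ebold}$, the commuting diagram~\eqref{commuting-diagram} and the defining property~\eqref{L2-projector-face} of $\Pboldh$ (applicable because $\curlbold \theta_{\Ebold} \in \Vtildeboldfaceh$) rewrite $[\muhat^{-1} \Pboldh \Bbold, \curlbold \theta_{\Ebold}]_{\FACE}$ as $(\mu^{-1}\Bbold,\curlbold \theta_{\Ebold})_{0,\Omega}$, which the continuous weak formulation~\eqref{eq:Maxwell:weak} then eliminates. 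Applying the same reasoning to the second equation tested with $\psi_h = \theta_{\Bbold}$, and summing the two identities, the mixed term $[\muhat^{-1}\theta_{\Bbold},\curlbold \theta_{\Ebold}]_{\FACE}$ cancels and I am left with
\begin{equation*}
\tfrac12 \tfrac{d}{dt} \bigl( [\varepsilonhat \theta_{\Ebold}, \theta_{\Ebold}]_{\EDGE}
+ [\muhat^{-1} \theta_{\Bbold}, \theta_{\Bbold}]_{\FACE} \bigr)
+ [\sigmahat \theta_{\Ebold}, \theta_{\Ebold}]_{\EDGE}
= \mathcal R(t; \theta_{\Ebold}),
\end{equation*}
where the consistency residual $\mathcal R$ gathers the mismatches between $[\varepsilonhat(\Piboldh\Ebold)_t,\cdot]_{\EDGE}$ and $(\varepsilon\Eboldt,\cdot)_{0,\Omega}$, between $[\sigmahat\Piboldh\Ebold,\cdot]_{\EDGE}$ and $(\sigma\Ebold,\cdot)_{0,\Omega}$, and between $[\JboldI,\cdot]_{\EDGE}$ and $(\Jbold,\cdot)_{0,\Omega}$.

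Each of the three pieces of $\mathcal R$ is then estimated elementwise by a standard VEM argument: insert the $L^2$ projection $\Pizboldh$ onto constants, apply the polynomial consistency~\eqref{consistency:edge} to pass from the discrete form to the continuous one on the projected parts, dominate the orthogonal remainders via the stability~\eqref{stability:edge}, and treat the coefficient mismatch $\varepsilon - \varepsilonhat$, $\sigma - \sigmahat$ through the $W^{1,\infty}$ assumption~\eqref{assumption:coefficients}, which produces an $O(h)$ factor. Combining with the approximation estimate~\eqref{approximation:Piboldh} (applied also to $\Eboldt$, using $\Piboldh \Eboldt = (\Piboldh \Ebold)_t$ which follows by linearity of $\Piboldh$) and with the interpolation bound of Proposition~\ref{prop:interpolation:edge} for the $\Jbold - \JboldI$ contribution, one obtains $|\mathcal R(t;\theta_{\Ebold})| \lesssim h^s\, \Phi(t)\, \Vert \theta_{\Ebold}(t)\Vert_{0,\Omega}$ with $\Phi \in L^1(0,T)$ depending on the regularity of $\Ebold$, $\Eboldt$, and $\Jbold$ assumed in the statement.

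The conclusion follows by integrating in time from $0$ to $t$, using Young's inequality on the right-hand side, and applying Gronwall's lemma. The initial error $\Vert\theta_{\Ebold}(0)\Vert_{0,\Omega}^2 + \Vert\theta_{\Bbold}(0)\Vert_{0,\Omega}^2$ is also $O(h^{2s})$ by combining~\eqref{approximation:Piboldh}, Lemma~\ref{lem:property-Pboldh} and Propositions~\ref{prop:interpolation:edge}--\ref{prop:interpolation:face} through the triangle inequality, since $\Eboldh(0) = \EboldI^0$ and $\Bboldh(0) = \BboldI^0$. Adding the $\rho$ contributions finally yields~\eqref{semi-discrete:bounds}. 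I expect the main obstacle to be the bookkeeping of $\mathcal R$: each of its three pieces simultaneously involves polynomial VEM consistency, piecewise-constant coefficient approximation, and the $H^s(\curlbold)$ regularity of $\Ebold$, $\Eboldt$ and $\Jbold$, and all three error sources must be kept at the sharp $h^s$ order simultaneously.
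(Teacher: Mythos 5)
Your proposal is correct and follows essentially the same route as the paper: the same splitting via $\Piboldh$ and $\Pboldh$, the same use of the commuting diagram and of the definition of $\Pboldh$ to eliminate the mixed term, and the same elementwise consistency/stability/coefficient-approximation estimates for the residual. The only cosmetic differences are that the paper first derives the pointwise identity $\partial_t(\Pboldh\Bbold-\Bboldh)=-\curlbold(\Piboldh\Ebold-\Eboldh)$ and substitutes it rather than summing the two tested error equations, and it integrates the differential inequality directly after dividing by the norm instead of invoking Young plus Gronwall, which is equivalent given your $\Phi\in L^1(0,T)$.
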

\begin{proof}
  For all~$t\in [0,T]$, we introduce
  \begin{equation}
    \label{error-VEM-semi}
    \eboldh = \eboldh(t) :=
    \Piboldh\Ebold(t) - \Eboldh(t),
    \quad\bboldh=\bboldh(t) :=
    \Pboldh\Bbold(t) - \Bboldh(t).
  \end{equation}
We recall that $\div\BboldI^0=0$. For all~$t\in[0,T]$, the definition of quantities~\eqref{error-VEM-semi}, the projector~$\Pboldh$ in~\eqref{L2-projector-face}, and the first equation in~\eqref{semi-discrete-scheme} allow us to prove that
  \begin{equation}
    \label{equation:star:semi-discrete}
    \begin{split}
      & [\varepsilonhat \eboldht + \sigmahat \eboldh, \wboldh]_{\EDGE} - [\muhat^{-1} \bboldh, \curlbold \wboldh]_{\FACE}\\[0.5em]
      & = -[\JboldI, \wboldh]_{\EDGE} + [\varepsilonhat \Piboldh \Eboldt + \sigmahat \Piboldh \Ebold, \wboldh]_{\EDGE} - [\muhat^{-1} \Pboldh \Bbold, \curlbold \wboldh]_{\FACE}\\[0.5em]
      & = -[\JboldI, \wboldh]_{\EDGE} + [\varepsilonhat \Piboldh \Eboldt + \sigmahat \Piboldh \Ebold, \wboldh]_{\EDGE} - (\mu^{-1}  \Bbold, \curlbold \wboldh)_{0,\Omega} \quad \wboldh \in \Vboldedgeh.
    \end{split}
  \end{equation}
  Moreover, for all~$\psiboldh\in\Vtildeboldfaceh$, we
  apply~\eqref{semi-discrete-scheme},~\eqref{L2-projector-face}, the
  commuting property~\eqref{commuting-diagram},
  and~\eqref{eq:Maxwell:weak}, and obtain
  \begin{align*}
    \begin{split}
      & [\muhat^{-1} \bboldht, \psiboldh]_{\FACE} + [\muhat^{-1} \curlbold \eboldh, \psiboldh]_{\FACE}
      = [\muhat^{-1} \Pboldh \Bboldt, \psiboldh]_{\FACE} + [\muhat^{-1} \curlbold \Piboldh(\Ebold),\psiboldh]_{\FACE}\\
      & \hspace{1cm}
      = (\mu^{-1}\Bboldt,\psiboldh)_{0,\Omega} + [\muhat^{-1} \Pboldh(\curlbold \Ebold), \psiboldh]_{\FACE} = (\mu^{-1}(\Bboldt + \curlbold \Ebold), \psiboldh)_{0,\Omega} =0.
    \end{split}
  \end{align*}
  Since~$\bboldht + \curlbold \eboldh \in \Vtildeboldfaceh$, the above
  equation implies that
  \begin{equation}
    \label{an:identity:semi-discrete}
    \bboldht + \curlbold \eboldh = \mathbf 0.
  \end{equation}
  We set~$\wboldh = \eboldh$ in~\eqref{equation:star:semi-discrete},
  use~\eqref{an:identity:semi-discrete}, and deduce that
  \begin{align*}
    \begin{split}
      & [\varepsilonhat \eboldht + \sigmahat \eboldh, \eboldh]_{\EDGE} + [\muhat^{-1} \bboldh, \bboldht]_{\FACE}\\[0.5em]
      & \hspace{2cm}
      =  -[\JboldI, \eboldh]_{\EDGE}
      + [\varepsilonhat \Piboldh \Eboldt + \sigmahat \Piboldh \Ebold, \eboldh]_{\EDGE}
      - (\mu^{-1}  \Bbold, \curlbold \eboldh)_{0,\Omega}.
    \end{split}
  \end{align*}
  Next, we substitute $(\mu^{-1}\Bbold,\curlbold\eboldh)_{0,\Omega}$
  with the expression given by the first equation~\eqref{eq:Maxwell:weak}:
  %%
  %% we add and subtract~$(\Jbold,\eboldh)_{0,\Omega}$ and
  %% use~\eqref{Maxwell}:
  \begin{equation} \label{split:T1T2T3}
    \begin{split}
      & \frac{1}{2} \partial _t \Vertiii{\varepsilonhat^{1/2}  \eboldh}^2_{\EDGE} + [\sigmahat \eboldh, \eboldh]_{\EDGE} + \frac{1}{2} \partial _t \Vertiii{\muhat^{-1/2} \bboldh}^2_{\FACE}  \\
      & \quad = \frac{1}{2} \partial _t [ \varepsilonhat \eboldh ,\eboldh]_{\EDGE} + [\sigmahat \eboldh, \eboldh]_{\EDGE} + \frac{1}{2} \partial_t [ \muhat^{-1}\bboldh, \bboldh]_{\FACE} \\
      & \quad = [-\JboldI, \eboldh] _{\EDGE} - (\muhat^{-1} \Bbold, \curlbold \eboldh)_{0,\Omega} + [\varepsilonhat \Piboldh \Eboldt + \sigmahat \Piboldh \Ebold, \eboldh]_{\EDGE}\\
      & \quad = \underbrace{(\Jbold , \eboldh)_{0,\Omega}-[\JboldI, \eboldh]_{\EDGE}}_{=:T_1}
      + \underbrace{ [\varepsilonhat \Piboldh \Eboldt, \eboldh]_{\EDGE} - (\varepsilon \Eboldt, \eboldh)_{0,\Omega} }_{=:T_2}
      + \underbrace{[\sigmahat \Piboldh \Ebold, \eboldh]_{\EDGE} - (\sigma \Ebold, \eboldh)_{0,\Omega}}_{=:T_3}.
    \end{split}
  \end{equation}
We derive an upper bound for the three terms $T_1$, $T_2$ and $T_3$ on the right-hand side of~\eqref{split:T1T2T3} separately.
To estimate the term $T_1$, we use the stability properties~\eqref{preliminary:stab} and~\eqref{stability:edge} of the bilinear form~$\SEface(\cdot, \cdot)$, employ standard polynomial approximation results, use the interpolation property~\eqref{interpolation-estimates:edge}, and obtain 
\begin{equation} \label{T1:semi-discrete}
\begin{split}
T_1 
& = (\Jbold , \eboldh)_{0,\Omega}-(\Pizboldh \JboldI, \eboldh)_{0,\Omega} - \sum_{\E \in \taun} \SEedge( (\Ibold - \Pizboldh) \JboldI, (\Ibold - \Pizboldh) \eboldh   )\\[0.5em]
& \lesssim (\Vert \Jbold - \Pizboldh \JboldI \Vert_{0,\Omega}  + \Vert (\Ibold-\Pizboldh) \JboldI \Vert_{0,\Omega}) \Vertiii{\eboldh}_{\EDGE}\\[0.5em]
& \lesssim (\Vert \Jbold - \Pizboldh \Jbold \Vert_{0,\Omega} + \Vert \Jbold - \JboldI \Vert_{0,\Omega}  + \Vert \Jbold-\Pizboldh \JboldI \Vert_{0,\Omega}) \Vertiii{\eboldh}_{\EDGE}\\[0.5em]
& \lesssim (\Vert \Jbold - \Pizboldh \Jbold \Vert_{0,\Omega} + \Vert \Jbold - \JboldI \Vert_{0,\Omega}  ) \Vertiii{\eboldh}_{\EDGE}\\[0.5em]
& \lesssim \h^s(\vert \Jbold \vert_{s,\Omega} + \h^{1-s}\Vert \curlbold \Jbold \Vert_{0,\Omega} + \h \vert \curlbold \Jbold \vert_{s,\Omega}) \Vertiii{\varepsilonhat^{1/2}  \eboldh}_{\EDGE} .
\end{split}
\end{equation}
To estimate the term~$T_2$, we introduce~$\cbold$, the piecewise constant average of~$\Eboldt$ over~$\taun$, add and subtract
$(\varepsilonhat\cbold,\eboldh)_{0,\Omega}=[\varepsilonhat\cbold,\eboldh]_{\EDGE}$,
note that $h\leq h^{s}$ for $s\leq1$, and write
  \begin{align*}
    \begin{split}
      T_2
      &
      \overset{\hspace{2mm}\eqref{consistency:edge}\hspace{3mm}}{=}
      [\varepsilonhat (\Piboldh \Eboldt - \cbold), \eboldh]_{\EDGE}
      + (\varepsilonhat (\cbold - \Eboldt), \eboldh)_{0,\Omega}
      + ((\varepsilonhat-\varepsilon) \Eboldt, \eboldh)_{0,\Omega} \\[0.5em]
      & \overset{\eqref{assumption:coefficients},\eqref{stability:edge}}{\lesssim}
      \left(   \Vert \Eboldt - \Piboldh \Eboldt \Vert _{0,\Omega} 
      + \Vert \Eboldt - \cbold \Vert_{0,\Omega}
      + \h \max_{\E\in\taun} \vert \varepsilon\vert_{W^{1,\infty}(\E)} \Vert \Eboldt \Vert_{0,\Omega} \right) \Vertiii{\eboldh}_{\EDGE}\\[0.5em]
      %% & \overset{\eqref{approximation:Piboldh},\eqref{interpolation-estimates:edge}}{\lesssim}
      &
      \overset{\hspace{2mm}\eqref{approximation:Piboldh}\hspace{3mm}}{\lesssim}
      \h^s \left(
      \Vert \Eboldt \Vert_{s,\Omega} +
      \h^{1-s} \Vert \curlbold \Eboldt \Vert_{0,\Omega} +
      \h \vert \curlbold \Eboldt \vert_{s,\Omega}
      \right) \Vertiii{\varepsilonhat^{1/2} \eboldh}_{\EDGE}.
    \end{split}
\end{align*}
Recalling assumption~\eqref{assumption:coefficients} again, we treat the term~$T_3$ analogously and arrive at the bound
  \begin{equation} \label{T3:semi-discrete}
    T_3 \lesssim \h^s \left(
    \Vert \Ebold \Vert_{s,\Omega}
    + \h^{1-s} \Vert \curlbold \Ebold \Vert_{0,\Omega}
     + \h (\vert \curlbold \Ebold \vert_{s,\Omega}
    %%+ \h \max_{\E\in\taun} \vert \sigma \vert_{W^{1,\infty}(\E)} \vert \curlbold \Ebold \vert_{s,\Omega}
    \right)
    \Vertiii{\varepsilonhat^{1/2} \eboldh}_{\EDGE}.
  \end{equation}
Introduce the regularity type term, which belong to~$L^1(0,T)$ due to the assumptions of the theorem,
\begin{align*}
\begin{split}
c_{\textbf{REG}}(t)=c_{\textbf{REG}}
& = \vert \Jbold \vert_{s,\Omega} + \Vert \Ebold  \Vert_{s,\Omega} + \Vert \Eboldt \Vert_{s,\Omega} + \h^{1-s} \Vert \curlbold \Jbold \Vert_{0,\Omega} + \h \vert \curlbold \Jbold \vert_{s,\Omega}\\
&  \quad + \h^{1-s} (\Vert \curlbold \Ebold \Vert_{0,\Omega} + \Vert \curlbold \Eboldt \Vert_{0,\Omega}) + \h (\vert \curlbold \Ebold \vert_{s,\Omega} ).
\end{split}
\end{align*}
Now, we collect the upper bounds on the terms~$T_1$, $T_2$, and~$T_3$ in~\eqref{split:T1T2T3}, recall \eqref{eq:param:bound}, and deduce that
\begin{equation} \label{bound:in-time}
\frac{1}{2} \partial_t \Vertiii{\varepsilonhat^{1/2} \eboldh}_{\EDGE}^2 \le  \frac{1}{2} \left( \partial_t \Vertiii{\varepsilonhat^{1/2} \eboldh}_{\EDGE}^2 + \partial_t \Vertiii{\muhat^{-1/2} \bboldh}_{\FACE}^2 \right) \lesssim c_{\textbf{REG}} \h^s \Vertiii{\varepsilonhat^{1/2} \eboldh}_{\EDGE}.
\end{equation}
The following identity is valid:
\begin{align*}
    \frac12 \partial_t \Vertiii{\varepsilonhat^{1/2} \,\cdot\,}_{\EDGE}^2 = \Vertiii{\varepsilonhat^{1/2} \,\cdot\,}_{\EDGE} \,\partial_t \Vertiii{\varepsilonhat^{1/2}\,\cdot\,}_{\EDGE}.
\end{align*}
We use this identity in~\eqref{bound:in-time}, so that, almost everywhere in time in $(0,T)$,
  \begin{align*}
    \partial_t \Vertiii{\varepsilonhat^{1/2} \eboldh(t)}_{\EDGE}
    \lesssim c_{\textbf{REG}} \h^s,
  \end{align*}
  and we integrate in time to obtain:
  \begin{align}
    \label{bound:time0-edge-00}
    \Vertiii{\varepsilonhat^{1/2} \eboldh(t)}_{\EDGE} \lesssim \Vertiii{ \varepsilonhat^{1/2} \eboldh(0)}_{\EDGE} 
    + \h^s \int_0^t c_{\textbf{REG}}(s) {\rm d}s 
    \lesssim \Vertiii{ \varepsilonhat^{1/2} \eboldh(0)}_{\EDGE} + \h^s .
  \end{align}
The error at the initial time $t=0$ is controlled as follows:
  \begin{equation}
    \label{bound:time0-edge}
    \begin{split}
      \Vertiii{\varepsilonhat^{1/2} \eboldh(0)}_{\EDGE}
      &
      \overset{\hspace{3mm}\eqref{stability:edge},\eqref{eq:param:bound}\hspace{3mm}}{\lesssim}
      \Vert \Ebold^0 - \EboldI^0 \Vert_{0,\Omega} 
      + \Vert \Ebold^0-\Piboldh \Ebold^0 \Vert_{0,\Omega}\\[0.5em]
      &
      \overset{\eqref{interpolation-estimates:edge}, \eqref{approximation:Piboldh}}{\lesssim}
      \h^s \left(\vert \Ebold^0 \vert_{s,\Omega} + \h^{1-s} \Vert \curlbold (\Ebold^0) \Vert_{0,\Omega}
      + \h \vert \curlbold (\Ebold^0) \vert_{s,\Omega}\right) \lesssim \h^s,
    \end{split}
  \end{equation}
  and using this inequality in~\eqref{bound:time0-edge-00} yields
  \begin{equation} \label{star1}
    \Vert \eboldh(t) \Vert_{0,\Omega} \overset{\eqref{stability:edge},\eqref{eq:param:bound}}{\lesssim}
    \Vertiii{\varepsilonhat^{1/2} \eboldh(t)}_{\EDGE}  \lesssim \h^s.
  \end{equation}
  Thus, we can write
  \begin{equation} \label{follia}
    \partial_t \Vertiii{\muhat^{-1/2} \bboldh(t)}^2_{\FACE}
    \overset{\eqref{bound:in-time}}{\lesssim} c_{\textbf{REG}} \h^s  \Vertiii{\varepsilonhat^{1/2} \eboldh(t) }_{\EDGE}
    \overset{\eqref{star1}}{\lesssim} c_{\textbf{REG}} \h^{2s}.
  \end{equation}
  Integrating in time~\eqref{follia} gives
  \begin{align*}
  \Vertiii{\muhat^{-1/2} \bboldh(t)}^2_{\FACE}
  \lesssim \Vertiii{\muhat^{-1/2} \bboldh(0)}^2_{\FACE} + h^{2s}.
  \end{align*}
  Observe that
  \begin{equation} \label{bound:time0-face}
    \Vertiii{\muhat^{-1/2}\bboldh(0)}_{\FACE}
    \overset{\eqref{stability:face},\eqref{eq:param:bound}}{\lesssim}
    \Vert \Bbold^0- \Pboldh \Bbold^0 \Vert_{0,\Omega}
    +\Vert \Bbold^0- \BboldI^0 \Vert_{0,\Omega}
    \overset{\eqref{interpolation-estimates:face},\eqref{property-Pboldh}}{\lesssim}
    \h \Vert \Bbold^0 \Vert_{0,\Omega} + \h^s \vert \Bbold^0 \vert_{s,\Omega} \lesssim \h^s.
  \end{equation}
  Then, we have
  \begin{equation} \label{star2}
    \Vert \bboldh(t) \Vert_{0,\Omega} \overset{\eqref{stability:face},\eqref{eq:param:bound}}{\lesssim}
    \Vertiii{\muhat^{-1/2}\bboldh(t)}_{\FACE} \lesssim \h^s.
  \end{equation}
Finally, we add and subtract $\Piboldh(\Ebold(t))$, $\Pboldh(\Bbold(t))$, and use the definitions of $\eboldh(t)$ and $\bboldh(t)$ and the triangle inequality to obtain
\begin{align*}
\begin{split}
& \Vert \Ebold(t) - \Eboldh(t)\Vert_{0, \Omega} + \Vert \Bbold(t) - \Bboldh(t) \Vert_{0,\Omega}\\
& \lesssim \Vert \Eboldh(t) - \Piboldh(\Ebold(t)) \Vert_{0,\Omega} +    \Vert \Bboldh(t) - \Pboldh (\Bbold(t)) \Vert_{0,\Omega} +    \Vert \eboldh(t) \Vert_{0,\Omega} + \Vert \bboldh(t) \Vert_{0,\Omega}.
\end{split}
\end{align*}
The assertion of the theorem follows on from using~\eqref{approximation:Piboldh}, \eqref{property-Pboldh},
  \eqref{star1}, and \eqref{star2}.
  %\eqref{bound:time0-edge}, and~\eqref{bound:time0-face}
  \qed
\end{proof}

% Local Variables:
% mode: latex
% End:

%%%%%%%%%%%%%%%%%%%%%%%%%%%%%%%%%%%%%%%%%%%%%%%%%%%%%%%%%%%%%%%%%%%%%%%%%%%
\section{Analysis of the fully-discrete scheme} \label{section:fully-discrete}
%%%%%%%%%%%%%%%%%%%%%%%%%%%%%%%%%%%%%%%%%%%%%%%%%%%%%%%%%%%%%%%%%%%%%%%%%%%
In this section, we prove the convergence of the fully-discrete
scheme~\eqref{VEM:Maxwell}.
Notably, we recall that we employ the implicit Euler scheme for the
time discretization and subdivide the time interval~$[0,T]$ into~$M$
sub-interval of uniform length~$\tau$.
We can extend the result below to other, possibly higher order, time discretization schemes.
\medskip

\begin{thm} \label{theorem:fully-discrete}
Let the geometric assumptions of Section~\ref{section:meshes} and assumption~\eqref{assumption:coefficients} be valid,
and~$(\Ebold,\Bbold)$ be the solutions to Maxwell's equations~\eqref{eq:Maxwell:weak}.
  We assume that~$\Ebold(t)$ and~$\Eboldt(t)$ belong
  to~$L^{\infty}((0,T), H^s(\curlbold,\Omega))$,
  and~$\partial_{tt}\Ebold$ and~$\partial_{tt}\Bbold$ to
  $L^{\infty}((0,T),[L^2(\Omega)]^3)$, $1/2<s \le 1$.
  Additionally, we assume that the electric current
  density~$\Jbold(t)$ in the right-hand side
  of~\eqref{eq:Maxwell:weak} belongs to~$L^{\infty}((0,T), H^s(\curlbold,\Omega))$ 
  for the same value of~$s$ as above.
  For all~$m=0,\dots,M$, let~$(\Eboldh^{m},\Bboldh^{m})$ denote the
  solutions of the fully-discrete scheme~\eqref{VEM:Maxwell} at the time step~$\ts{m}$.
Then, for sufficiently small $\tau$ as required in \eqref{above-bound}, the following error estimate is valid:
  \begin{equation}
    \label{bound:fully-explicit}
    \begin{split}
      & \Vert \Ebold(\ts{m}) - \Eboldhm \Vert_{0,\Omega}
      + \Vert \Bbold(\ts{m}) - \Bboldhm \Vert_{0,\Omega}
      \lesssim \h^s + \tau \quad\quad \forall m=0,\dots,M.
    \end{split}
  \end{equation}
\end{thm}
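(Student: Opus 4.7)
The plan is to mirror the strategy used in Theorem~\ref{theorem:semi-discrete:analysis}, but with the time-continuous derivatives replaced by discrete differences and with an extra consistency term arising from the backward Euler discretization. First, for every time step~$\tm$, I would introduce the projected errors
\begin{equation*}
\eboldhm := \Piboldh \Ebold(\tm) - \Eboldhm, \qquad \bboldhm := \Pboldh \Bbold(\tm) - \Bboldhm,
\end{equation*}
and derive the discrete error equations by subtracting the fully-discrete scheme~\eqref{VEM:Maxwell} from the weak formulation~\eqref{eq:Maxwell:weak} evaluated at~$\tmpo$, exactly as in~\eqref{equation:star:semi-discrete}. The commuting diagram~\eqref{commuting-diagram} together with~$\Vtildeboldfaceh = \curlbold(\Vboldedgeh)$ yields, in perfect analogy with~\eqref{an:identity:semi-discrete}, the identity
\begin{equation*}
\frac{\bboldhmpo - \bboldhm}{\tau} + \curlbold \eboldhmpo = \mathbf 0,
\end{equation*}
so that the curl-couplings between the two error equations cancel after testing with~$\wboldh = \eboldhmpo$ in the first and~$\psiboldh = \bboldhmpo$ in the second.

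Second, I would exploit the standard backward-Euler algebraic inequality
\begin{equation*}
\Big[\varepsilonhat \tfrac{\eboldhmpo - \eboldhm}{\tau}, \eboldhmpo\Big]_{\EDGE} \ge \frac{1}{2\tau}\Big(\Vertiii{\varepsilonhat^{1/2}\eboldhmpo}_{\EDGE}^2 - \Vertiii{\varepsilonhat^{1/2}\eboldhm}_{\EDGE}^2\Big),
\end{equation*}
and the analogous one for the face term, to obtain a discrete energy estimate of the form
\begin{equation*}
\Vertiii{\varepsilonhat^{1/2}\eboldhmpo}_{\EDGE}^2 + \Vertiii{\muhat^{-1/2}\bboldhmpo}_{\FACE}^2 \le \Vertiii{\varepsilonhat^{1/2}\eboldhm}_{\EDGE}^2 + \Vertiii{\muhat^{-1/2}\bboldhm}_{\FACE}^2 + \tau\,R^{m+1}\,\Vertiii{\varepsilonhat^{1/2}\eboldhmpo}_{\EDGE},
\end{equation*}
where the residual~$R^{m+1}$ collects three families of terms: (i) spatial consistency/interpolation contributions from~$\Jbold$, $\Ebold$, $\Eboldt$ evaluated at~$\tmpo$, which are bounded by~$\h^s$ as in~\eqref{T1:semi-discrete}--\eqref{T3:semi-discrete}; (ii) coefficient-freezing contributions of order~$\h$; and (iii) a genuinely new time-consistency term
\begin{equation*}
\Big\Vert \tfrac{\Ebold(\tmpo)-\Ebold(\tm)}{\tau} - \Eboldt(\tmpo) \Big\Vert_{0,\Omega} \lesssim \tau \,\Vert \partial_{tt} \Ebold \Vert_{L^\infty((0,T),L^2)},
\end{equation*}
produced by Taylor expansion with integral remainder and the assumed~$L^\infty$ regularity of~$\partial_{tt}\Ebold$ and~$\partial_{tt}\Bbold$.

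Third, I would sum from~$m=0$ to~$m=M-1$, absorb the~$\Vertiii{\varepsilonhat^{1/2}\eboldhmpo}_{\EDGE}$ factor on the right-hand side via a weighted Young inequality, and apply the discrete Grönwall lemma under the smallness condition on~$\tau$ advertised in~\eqref{above-bound}; this yields
\begin{equation*}
\max_{0\le m\le M}\Big(\Vertiii{\varepsilonhat^{1/2}\eboldhm}_{\EDGE} + \Vertiii{\muhat^{-1/2}\bboldhm}_{\FACE}\Big) \lesssim \Vertiii{\varepsilonhat^{1/2}\eboldh^0}_{\EDGE} + \Vertiii{\muhat^{-1/2}\bboldh^0}_{\FACE} + \h^s + \tau.
\end{equation*}
The initial contributions are controlled exactly as in~\eqref{bound:time0-edge} and~\eqref{bound:time0-face}, giving~$\h^s$. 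The theorem then follows by writing
\begin{equation*}
\Ebold(\tm)-\Eboldhm = (\Ebold(\tm)-\Piboldh\Ebold(\tm)) + \eboldhm, \qquad \Bbold(\tm)-\Bboldhm = (\Bbold(\tm)-\Pboldh\Bbold(\tm)) + \bboldhm,
\end{equation*}
and invoking~\eqref{approximation:Piboldh} and~\eqref{property-Pboldh} on the projection errors together with the stability bounds~\eqref{stability:edge},~\eqref{stability:face} and~\eqref{eq:param:bound}.

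I expect the only genuine technical obstacle to be the bookkeeping in the Grönwall step: one must bound the accumulated residual~$\tau\sum_m R^{m+1}$ uniformly in~$M$, which requires the~$L^\infty$-in-time regularity hypotheses to convert the sum into a factor of~$T$, and requires the smallness of~$\tau$ in order for the implicit Grönwall constant to be harmless. Everything else reduces to repeating the spatial estimates of the semi-discrete proof stepwise in time.
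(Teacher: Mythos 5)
Your overall architecture coincides with the paper's: the same projected errors, the same pair of error equations tested with $\wboldh=\eboldhmpo$ and $\psiboldh=\bboldhmpo$, the same splitting of the residual into spatial consistency terms of order $\h^s$ and time-consistency terms of order $\tau$, and the same iteration/Gr\"onwall step under the smallness condition on $\tau$. There is, however, one step that is wrong as stated: the claimed identity
\begin{equation*}
\frac{\bboldhmpo - \bboldhm}{\tau} + \curlbold \eboldhmpo = \mathbf 0
\end{equation*}
does \emph{not} hold in the fully-discrete setting, and it is not ``in perfect analogy'' with~\eqref{an:identity:semi-discrete}. In the semi-discrete case that identity follows because $\Pboldh$ commutes with $\partial_t$ and the second Maxwell equation $\Bboldt+\curlbold\Ebold=\mathbf 0$ is satisfied exactly at every time. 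Here, the first error equation only gives, for all $\psiboldh\in\Vtildeboldfaceh$,
\begin{equation*}
\frac{1}{\tau}\,[\muhat^{-1}(\bboldhmpo-\bboldhm),\psiboldh]_{\FACE} + [\muhat^{-1}\curlbold\eboldhmpo,\psiboldh]_{\FACE}
= \big(\mu^{-1}\big((\Bbold(\tmpo)-\Bbold(\tm))/\tau - \Bboldt(\tmpo)\big),\psiboldh\big)_{0,\Omega},
\end{equation*}
whose right-hand side is the backward-Euler consistency error for $\Bbold$: it is of size $\tau\,\Vert\partial_{tt}\Bbold\Vert$ but it is not zero. Hence the element $(\bboldhmpo-\bboldhm)/\tau+\curlbold\eboldhmpo$ of $\Vtildeboldfaceh$ is the discrete Riesz representative of that residual, not the zero function.

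This does not sink the proof, because the cancellation of the curl couplings that you want is obtained simply by adding the two tested error equations: the terms $[\muhat^{-1}\curlbold\eboldhmpo,\bboldhmpo]_{\FACE}$ and $-[\muhat^{-1}\bboldhmpo,\curlbold\eboldhmpo]_{\FACE}$ are equal and opposite, and no exact identity is needed for that. But you must then keep the extra residual term $\big(\mu^{-1}((\Bbold(\tmpo)-\Bbold(\tm))/\tau-\Bboldt(\tmpo)),\bboldhmpo\big)_{0,\Omega}$ on the right-hand side --- this is the term $T_4$ in the paper --- and bound it by $\tau\,\Vert\partial_{tt}\Bbold\Vert_{L^\infty((0,T),[L^2(\Omega)]^3)}$ times a norm of the discrete error, to be absorbed by Young's inequality like everything else. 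Your own hypothesis list betrays that this term must be present: if your identity were true, the $\Bbold$-equation would contribute no time-consistency error and the assumption $\partial_{tt}\Bbold\in L^{\infty}((0,T),[L^2(\Omega)]^3)$ would be superfluous. With $T_4$ reinstated, the rest of your argument (discrete energy inequality, summation and discrete Gr\"onwall for small $\tau$, the initial-data bounds as in~\eqref{bound:time0-edge} and~\eqref{bound:time0-face}, and the final triangle inequality with~\eqref{approximation:Piboldh} and~\eqref{property-Pboldh}) goes through exactly as in the paper.
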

%%%
\begin{proof}
  Let~$\Piboldh$ and~$\Pboldh$ be the two projectors introduced
  in~\eqref{curl-projector-edge} and~\eqref{L2-projector-face}, whose
  approximation properties are detailed
  in~\eqref{approximation:Piboldh} and~\eqref{property-Pboldh},
  respectively, and introduce
  \begin{equation} \label{two:errors:fully-explicit}
    \eboldhm := \Piboldh(\Ebold(\tm)) - \Eboldhm,
    \quad
    \bboldhm := \Pboldh (\Bbold(\tm)) - \Bboldhm
    \quad \quad \forall m=0,\dots,M.
  \end{equation}
As a first step, we show two error equations, which we can deduce from definition~\eqref{two:errors:fully-explicit} and the fully-discrete problem~\eqref{VEM:Maxwell}.
  The first error equation reads as:
  for all~$\psiboldh \in \Vtildeboldfaceh$,
  \begin{equation} \label{first:error:equation-fully-explicit}
    \begin{split}
      & \frac{1}{\tau}[\muhat^{-1} (\bboldhmpo - \bboldhm), \psiboldh]_{\FACE}
      + [\muhat^{-1}\curlbold(\eboldhmpo), \psiboldh]_{\FACE} \\
      & \overset{\eqref{VEM:Maxwell}}{=}
      \frac{1}{\tau} [\muhat^{-1} \Pboldh(\Bbold(\tmpo) - \Bbold(\tm)), \psiboldh]_{\FACE}
      + [\muhat^{-1}\curlbold(\Piboldh \Ebold(\tmpo)), \psiboldh]_{\FACE} \\
      & \overset{\eqref{L2-projector-face}, \eqref{commuting-diagram}}{=}
      \frac{1}{\tau} (\mu^{-1} (\Bbold(\tmpo) - \Bbold(\tm)), \psiboldh)_{0,\Omega}
      + [\muhat^{-1}\Pboldh (\curlbold(\Ebold(\tmpo))), \psiboldh]_{\FACE} \\
      & \overset{\eqref{L2-projector-face}}{=}
      (\mu^{-1} ((\Bbold(\tmpo) - \Bbold(\tm))/\tau + \curlbold(\Ebold(\tmpo))), \psiboldh )_{0,\Omega}\\
      & \overset{\eqref{eq:Maxwell:strong}}{=}
      (\mu^{-1} ((\Bbold(\tmpo) - \Bbold(\tm))/\tau - \partial_t \Bbold(\tmpo)) , \psiboldh )_{0,\Omega} 
      =: (\omegam,\psiboldh   )_{0,\Omega},\\
    \end{split}
  \end{equation}
which intrinsically defines the last term $\omegam$. The second error equation reads
  \begin{equation} \label{second:error:equation-fully-explicit}
    \begin{split}
      & \frac{1}{\tau} [\varepsilonhat (\eboldhmpo-\eboldhm), \eboldhmpo]_{\EDGE}
      + [\sigmahat \eboldhmpo, \eboldhmpo]_{\EDGE} - [\muhat^{-1} \bboldhmpo, \curlbold(\eboldhmpo)]_{\FACE}\\
      & \overset{\eqref{eq:Maxwell:weak},\eqref{L2-projector-face}}{=}
      [-\JboldI, \eboldhmpo]_{\EDGE} 
      + [\varepsilonhat (\Piboldh (\Ebold(\tmpo) - \Ebold(\tm)))/\tau + \sigmahat \Piboldh (\Ebold(\tmpo)),  \eboldhmpo]_{\EDGE} \\
      &   \quad \quad \quad - (\mu^{-1} \Bbold(\tmpo), \curlbold(\eboldhmpo))_{0,\Omega}.
    \end{split}
  \end{equation}
  We pick~$\psiboldh = \bboldhmpo$ as a test function
  in~\eqref{first:error:equation-fully-explicit}, sum the resulting
  equation with~\eqref{second:error:equation-fully-explicit}, and get
  \begin{align*}
    \begin{split}
      & \frac{1}{\tau} [\varepsilonhat (\eboldhmpo-\eboldhm), \eboldhmpo]_{\EDGE}
      + [\sigmahat \eboldhmpo, \eboldhmpo]_{\EDGE} 
      + \frac1\tau[\muhat^{-1} (\bboldhmpo - \bboldhm), \bboldhmpo]_{\FACE}\\
      & = [-\JboldI, \eboldhmpo]_{\EDGE} 
      + [\varepsilonhat (\Piboldh (\Ebold(\tmpo) - \Ebold(\tm)))/\tau + \sigmahat \Piboldh (\Ebold(\tmpo)),  \eboldhmpo]_{\EDGE} \\
      &   \quad \quad \quad - (\mu^{-1} \Bbold(\tmpo), \curlbold(\eboldhmpo))_{0,\Omega} + (\omegam, \bboldhmpo)_{0,\Omega} =: T_1 + T_2 + T_3 + T_4,
    \end{split}
  \end{align*}
  where, using the first equation of~\eqref{eq:Maxwell:weak}
  with~$\eboldhmpo$ as a test function, we have set
  \begin{align}
    & T_1:= (\Jbold,\eboldhmpo)_{0,\Omega} - [\JboldI, \eboldhmpo]_{\EDGE}, \label{T1} \\[0.5em]
    & T_2:= [\varepsilonhat (\Piboldh (\Ebold(\tmpo) - \Ebold(\tm)))/\tau , \eboldhmpo]_{\EDGE} - (\varepsilon \partial_t \Ebold(\tmpo), \eboldhmpo)_{0,\Omega}, \label{T2} \\[0.5em]
    & T_3:= [\sigmahat \Piboldh(\Ebold(\tmpo)), \eboldhmpo]_{\EDGE} - (\sigma \Ebold(\tmpo), \eboldhmpo)_{0,\Omega}, \label{T3}\\[0.5em]
    & T_4:= (\mu^{-1} (\Bbold(\tmpo) - \Bbold(\tm))/\tau - \partial_t \Bbold(\tmpo),\bboldhmpo)_{0,\Omega}.  \label{T4}
  \end{align}
  We easily deduce that
\begin{equation} \label{first:bound:fully-explicit}
\begin{split}
& \Vertiii{\varepsilonhat^{1/2}  \eboldhmpo}_{\EDGE}^2 + \Vertiii{\muhat^{1/2} \bboldhmpo}_{\FACE}^2 \\
& % \hspace{2cm}
 \lesssim \Vertiii{\varepsilonhat^{1/2}\eboldhm}_{\EDGE} \Vertiii{\varepsilonhat^{1/2}\eboldhmpo}_{\EDGE}   + \Vertiii{\muhat^{1/2}\bboldhm}_{\FACE} \Vertiii{\muhat^{1/2}\bboldhmpo}_{\FACE} + \tau (T_1 + T_2 + T_3 + T_4).
\end{split}
\end{equation}
For the time being, assume the following bound is valid:
\begin{equation} \label{bound:T1-T4}
\tau(T_1 + T_2 + T_3 + T_4) \lesssim \tau (\h^s + \tau) \Vertiii{\varepsilonhat^{1/2}\eboldhmpo}_{\EDGE}
\lesssim \tau \Vertiii{\varepsilonhat^{1/2}\eboldhmpo}^2_{\EDGE} + \tau (\h^s + \tau)^2.
\end{equation}
We shall show~\eqref{bound:T1-T4} at the end of the proof.
Inserting~\eqref{bound:T1-T4} in~\eqref{first:bound:fully-explicit}
and some standard manipulations yield, for a positive~$c$ independent of~$\h$ and~$\tau$,
\begin{align*}
\begin{split}
& \Vertiii{\varepsilonhat^{1/2}\eboldhmpo}_{\EDGE}^2 + \Vertiii{\muhat^{1/2}\bboldhmpo}_{\FACE}^2\\
& \le c\left( \Vertiii{\varepsilonhat^{1/2}\eboldhm}_{\EDGE}^2 + \Vertiii{\muhat^{1/2}\bboldhm}_{\FACE}^2
 + \tau \Vertiii{\varepsilonhat^{1/2}\eboldhmpo}^2_{\EDGE} + \tau(\h^s + \tau)^2 \right).
\end{split}
\end{align*}
In other words, for $\tau \le 1/c$, we have
\begin{equation} \label{above-bound}
 \Vertiii{\varepsilonhat^{1/2}\eboldhmpo}_{\EDGE}^2 + \Vertiii{\muhat^{1/2}\bboldhmpo}_{\FACE}^2
 \le \frac{1}{1-c\tau} (\Vertiii{\varepsilonhat^{1/2}\eboldhm}_{\EDGE}^2 + \Vertiii{\muhat^{1/2}\bboldhm}_{\FACE}^2)+ \frac{c\tau}{1-c\tau} (\h^s + \tau)^2.
\end{equation}
Bound~\eqref{above-bound} has the form
\begin{align*}
  a_{m+1} \le \frac{1}{1-c\tau} a_m + \frac{c\tau}{1-c\tau} (\h^s + \tau)^2.
\end{align*}
Recalling that~$M:=T/\tau$, we can iterate the above bound and write
\begin{align*}
  \begin{split}
    a_m 
    & \le \left( \frac{1}{1-c\tau}  \right)^M a_0 + c\tau  \left(\sum_{i=1}^M \left( \frac{1}{1 - c\tau} \right)^i \right) (\h^s + \tau)^2\\
    & \le \left( \frac{1}{1-c\tau}  \right)^M a_0 + c T \left( \frac{1}{1-c\tau} \right)^M  (\h^s + \tau)^2 = \left( \frac{1}{1-c\tau} \right)^M \left( a_0 + c T (\h^s + \tau)^2 \right)
    \quad \quad  \forall m=1,\dots,M.
  \end{split}
\end{align*}
By noting that
\begin{align*}
\left( \frac{1}{1-c\tau} \right)^{M} =\left( \frac{1}{1-c\tau} \right)^{\frac{T}{\tau}}
\end{align*}
is uniformly bounded as~$\tau \rightarrow 0$, we achieve
\begin{align*}
  a_m \lesssim a_0 + (\h^s + \tau)^2 \quad \quad  \forall m=1,\dots,M.
\end{align*}
Thus, bound~\eqref{above-bound} yields
\begin{equation}
  \Vertiii{\varepsilonhat^{1/2}\eboldhmpo}_{\EDGE}^2 + \Vertiii{\muhat^{1/2}\bboldhmpo}_{\FACE}^2
  \lesssim \Vertiii{\varepsilonhat^{1/2}\eboldh^0}_{\EDGE}^2 + \Vertiii{\muhat^{1/2}\bboldh^0}_{\FACE}^2 + (\h^s + \tau)^2.
\end{equation}
The assertion of the theorem follows from the triangle inequality, the data assumptions \eqref{eq:param:bound},
stability properties~\eqref{stability:edge}
and~\eqref{stability:face}, and bounds~\eqref{bound:time0-edge}
and~\eqref{bound:time0-face} on the initial data approximation error.

\medskip
Thus, we are left to show~\eqref{bound:T1-T4}, i.e., the upper bounds
on the terms~$T_i$, $i=1,\dots,4$, in~\eqref{T1}-\eqref{T4}.
In the following bounds, we shall use the data assumption~\eqref{eq:param:bound} several times.
Therefore, this will not be declared at every instance.
We can deal with the terms~$T_1$ and~$T_3$ as in the semi-discrete
analysis of Theorem~\ref{theorem:semi-discrete:analysis}. 
More precisely, proceeding as in~\eqref{T1:semi-discrete}
and~\eqref{T3:semi-discrete}, we can write
\begin{align*}
  T_1 \lesssim \h^s (\vert \Jbold \vert_{s,\Omega}
  + \h^{1-s}\Vert \curlbold \Jbold \Vert_{0,\Omega}
  + \h      \vert \curlbold \Jbold \vert_{s,\Omega}) \Vertiii{\eboldhmpo}_{\EDGE}
  \lesssim \h^s \Vertiii{\varepsilonhat^{1/2}\eboldhmpo}_{\EDGE}
\end{align*}
and
\begin{align*}
  \begin{split}
    T_3
    & \overset{\eqref{assumption:coefficients}}{\lesssim} \h^s (\vert \Ebold (\tmpo) \vert_{s,\Omega} + \h^{1-s} \Vert \curlbold(\Ebold(\tmpo))\Vert_{0,\Omega}
    + \h \max_{\E \in \taun} \vert \varepsilon \vert_{W^{1,\infty}(\E)}  \vert \curlbold(\Ebold(\tmpo)) \vert_{s,\Omega})\Vertiii{\eboldhmpo}_{\EDGE}\\
    & \lesssim \h^s \Vertiii{\varepsilonhat^{1/2}\eboldhmpo}_{\EDGE}.
  \end{split}
\end{align*}
Next, we focus on the two remaining terms and start with~$T_4$:
\begin{align*}
  \begin{split}
    T_4 
    & := (\mu^{-1} \partial_t \Bbold(\tmpo) - (\Bbold(\tmpo) - \Bbold(\tm)) /\tau, \eboldhmpo)_{0,\Omega} \\
    & \overset{\eqref{stability:edge}}{\lesssim} \Vert \partial_t \Bbold(\tmpo) - (\Bbold(\tmpo) - \Bbold(\tm)) /\tau \Vert_{0,\Omega} 
    \Vertiii{\eboldhmpo}_{\EDGE}\\
    & = \Vert \frac{1}{\tau} \int_{\tm}^{\tmpo} (s-\tm) \partial_{tt}\Bbold(s) ds \Vert_{0,\Omega} \Vertiii{\eboldhmpo}_{\EDGE}\\
    & \le \tau \Vert \partial_{tt}\Bbold \Vert_{L^{\infty}([\tm,\tmpo],L^2(\Omega))} \Vertiii{\eboldhmpo}_{\EDGE} \lesssim \tau \Vertiii{\varepsilonhat^{1/2}\eboldhmpo}_{\EDGE}.\\
  \end{split}
\end{align*}
As for the term~$T_2$, we consider the splitting
\begin{align*}
  \begin{split}
    T_2
    & = [\varepsilonhat(\Piboldh(\Ebold(\tmpo)-\Ebold(\tm)))/\tau, \eboldhmpo]_{\EDGE} - (\varepsilon (\Ebold(\tmpo)-\Ebold(\tm))/\tau ,  \eboldhmpo)_{0,\Omega} \\
    & \quad +  (\varepsilon (\Ebold(\tmpo)-\Ebold(\tm))/\tau,  \eboldhmpo)_{0,\Omega} - (\varepsilon \partial_t \Ebold(\tmpo), \eboldhmpo)_{0,\Omega}
    =: T_{2,1} + T_{2,2}.
  \end{split}
\end{align*}
The term~$T_{2,2}$ is dealt with as the term~$T_4$:
\begin{align*}
  T_{2,2}\le \tau \Vert \partial_{tt} \Ebold \Vert_{L^{\infty}([\tm,\tmpo], L^2(\Omega))} \Vertiii{\eboldhmpo} \lesssim \tau \Vertiii{\varepsilonhat^{1/2}\eboldhmpo}_{\EDGE}.
\end{align*}
Finally, we show an upper bound on the term~$T_{2,1}$ proceeding as for the term~$T_3$:
\begin{align*}
  T_{2,1} \lesssim \h^s \Vert (\Ebold(\tmpo) - \Ebold(\tm))/\tau \Vert_{*,\Omega} \Vertiii{\varepsilonhat^{1/2}\eboldhmpo}_{\EDGE},
\end{align*}
where we have used~\eqref{assumption:coefficients} again and set
\begin{align*}
  \Vert \cdot \Vert_{*,\Omega} := \vert \cdot \vert_{s,\Omega} + \h^{1-s} \Vert \curlbold(\cdot) \Vert_{0,\Omega} + \h \vert \curlbold(\cdot) \vert_{s,\Omega}.
\end{align*}
We must prove that the *-norm of the difference quotient is finite. To
this aim, observe
\begin{align*}
  \begin{split}
    & \Vert (\Ebold(\tmpo)-\Ebold(\tm)/\tau) \Vert_{*,\Omega}
    = \left\Vert 1/\tau \int_{\tm}^{\tmpo} \partial_t \Ebold(s) ds \right\Vert_{*,\Omega}\\
    & \le 1/\tau \int_{\tm}^{\tmpo} \Vert \partial_t \Ebold(s)  \Vert_{*,\Omega} ds 
    \le \Vert \partial_t \Ebold  \Vert_{L^{\infty}((0,T), H^s(\curlbold,\Omega))}.
  \end{split}
\end{align*}
Collecting the bounds on the terms~$T_1$, $T_{2,1}$, $T_{2,2}$, $T_{3}$,
and~$T_{4}$, we deduce~\eqref{bound:T1-T4}, whence the assertion
follows.
\qed
\end{proof}

% Local Variables:
% mode: latex
% End:

% Hey Emacs, this is -*-latex-*-
\section{Numerical results}
\label{section:numerical:results}

In this section, we investigate the accuracy of the fully discrete scheme~\eqref{VEM:Maxwell}.
To this end, we consider three different mesh families:
\medskip
\begin{itemize}
\item\texttt{cube}: regular cubic meshes;
\item\texttt{voro}: Voronoi tessellations optimized by the Lloyd algorithm;
\item\texttt{rand}: Voronoi tessellations of a cloud of points that are randomly positioned in the computational domain.
\end{itemize}
\medskip

We selected these three types of meshes as they offer an increasing level of geometric difficulty.
Indeed, the meshes in \texttt{cube} are uniform;
the meshes in \texttt{voro} may have small edges and faces but the geometric shape of the mesh elements is not distorted; finally meshes \texttt{rand} may have small edges and faces, as well as stretched polyhedral elements.
We refer to a specific partition of $\Omega$ by the corresponding keyword (\texttt{cube}, \texttt{voro}, and \texttt{rand}) followed by the number of elements.
For example, ``\texttt{voro125}'' refers to a mesh made of~$125$ Voronoi cells optimized by the Lloyd algorithm.

We numerically verify the optimal convergence rate in the $L^2$ norm of the approximation to the electric field~$\Ebold$ and the magnetic
flux field~$\Bbold$ on a sequence of four refined meshes for each mesh family.
These four meshes have a decreasing mesh size. 
We show the third mesh of each family in Figure~\ref{fig:meshes}.
\begin{figure}[!htb]
  \centering
  \begin{tabular}{ccc}
    \texttt{cube1000} &\texttt{voro1000} &\texttt{rand1000} \\
    \includegraphics[width=0.30\textwidth]{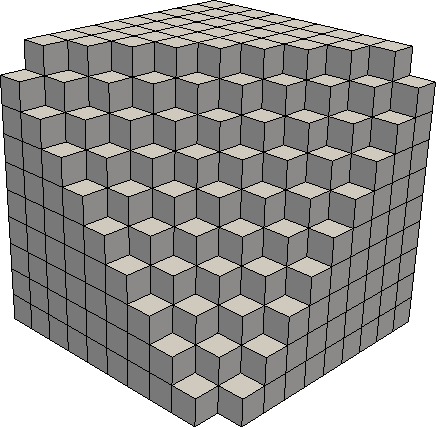} &\phantom{mm}
    \includegraphics[width=0.30\textwidth]{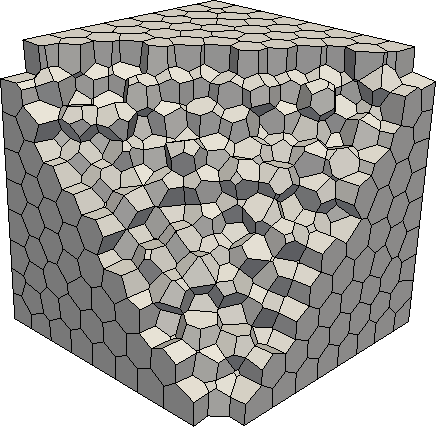} &\phantom{mm}
    \includegraphics[width=0.30\textwidth]{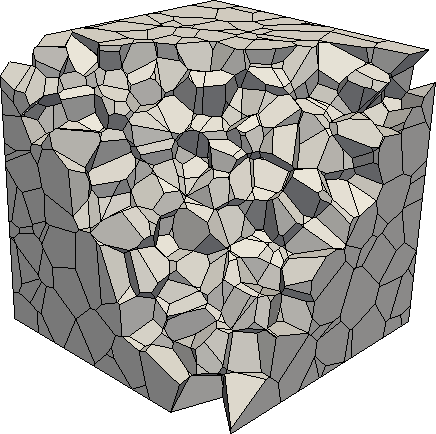} 
  \end{tabular}
  \caption{The third mesh of the mesh families \texttt{cube} (left panel), \texttt{voro} (middle panel), and \texttt{rand} (right panel).}
  \label{fig:meshes}
\end{figure}
%%%
The virtual element approximations~$\Ebold_h$ and~$\Bbold_h$ to~$\Ebold$ and~$\Bbold$ are not available in closed form
so we evaluate the error in the~$L^2$ norm at any time $T$ by using the polynomial projections~$\Pizboldh \Ebold_h$ and~$\Pizboldh \Bbold_h$.

We performed a sensitivity analysis on the stabilizations terms that appear in the definition of edge and face discrete scalar products.
More precisely, we inserted two constant coefficients, $\eta_{\text{edge}}^K$ and $\eta_{\text{face}}^K$ in front of $\SEedge(\cdot,\cdot)$ and $\SEface(\cdot,\cdot)$, respectively.
We observe that the best choice is given by $\eta_{\text{edge}}^K=0.01$ and $\eta_{\text{face}}^K=0.5$ and use these values in all numerical tests.

\subsection{Test Case 1: constant coefficients}
We solve Maxwell's equations on the computational domain~$\Omega=(0,\,1)^3$ for~$t\in[0,1]$
with constant coefficients $\varepsilon,\,\sigma$, and $\mu$ equal to~$1$.
The boundary condition and the current density vector are computed by taking the exact solution fields
\begin{equation*}
  \Ebold(t,x,y,z) = t\curlbold\phibold(x,y,z) + t^2\psibold(x,y,z)
  \quad\text{and}\quad
  \Bbold(t,x,y,z) = \frac{t^2}{2} \curlbold\curlbold\phibold(x,y,z)\, ,
\end{equation*}
where the auxiliary vector-valued fields~$\phibold$ and~$\psibold$ are defined as
\[
\phibold = \left(\begin{array}{c}
    \sin^2(\pi x)\, y^2(1-y)^2    \, z^2(1-z)^2     \\[0.5em]
    x^2(1-x)^2   \, \sin^2(\pi y) \, z^2(1-z)^2     \\[0.5em]
    x^2(1-x)^2   \, y^2(1-y)^2    \, \sin^2(\pi z)  \end{array}\right),%\\[0.5em]
  \quad \quad \psibold = \nabla\big(\sin(\pi x))\sin(\pi y)\sin(\pi z)\big).
\]
A straightforward calculation shows that $\div\Bbold=0$, i.e., the magnetic field $\Bbold$ is solenoidal.

In Figure~\ref{fig:convEBTimeAndSpace1}, we plot the $L^2$ errors at final time~$T=1$ for simultaneous refinements of $h$ and $\tau$ on the three mesh families
and observe the expected convergence rate, which we recall has to be proportional to~$h+\tau$;
see Theorem~\ref{theorem:fully-discrete}.
\begin{figure}[!htb]
\centering
\begin{tabular}{cc}
\includegraphics[width=0.45\textwidth]{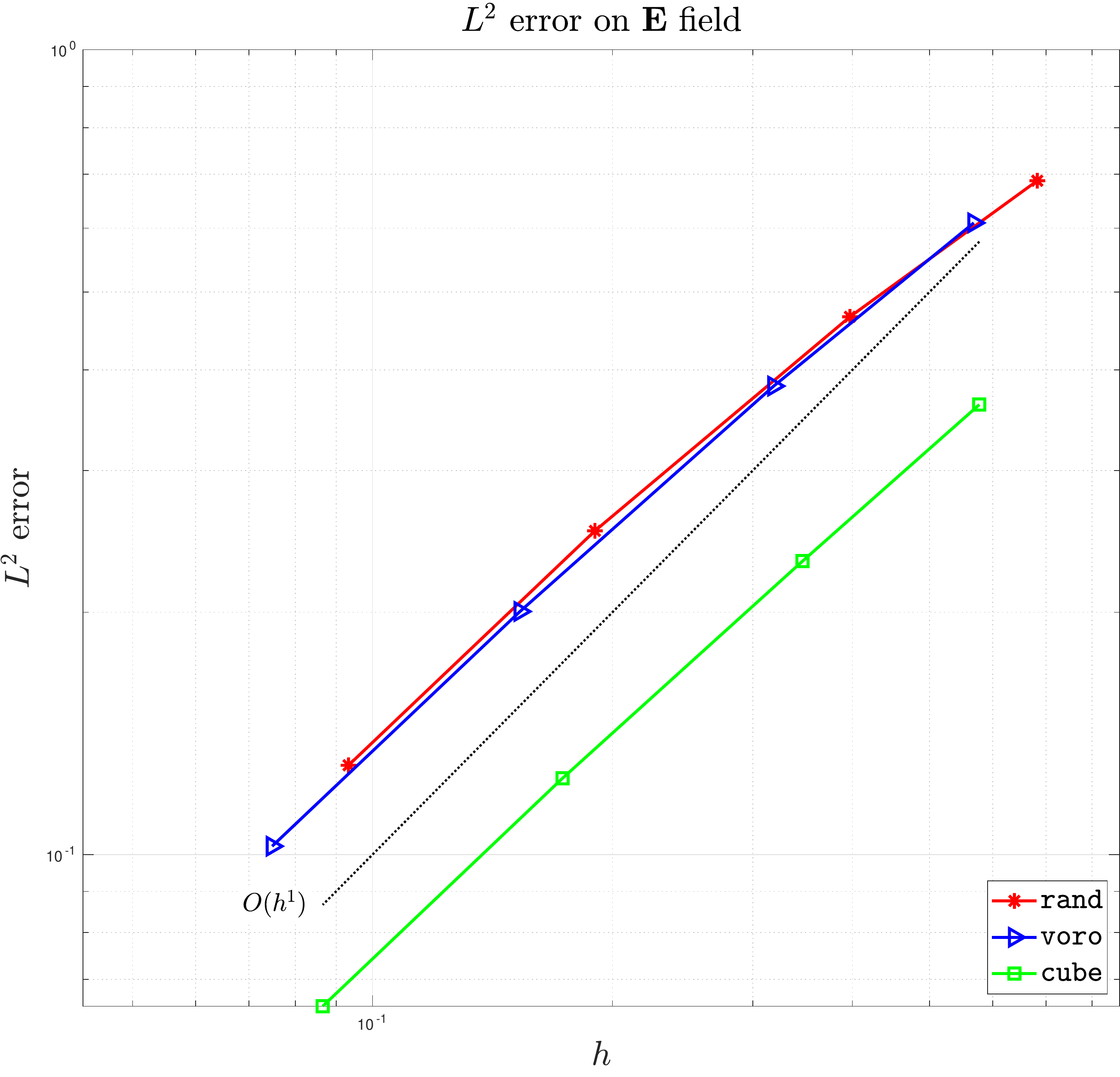} &
\includegraphics[width=0.45\textwidth]{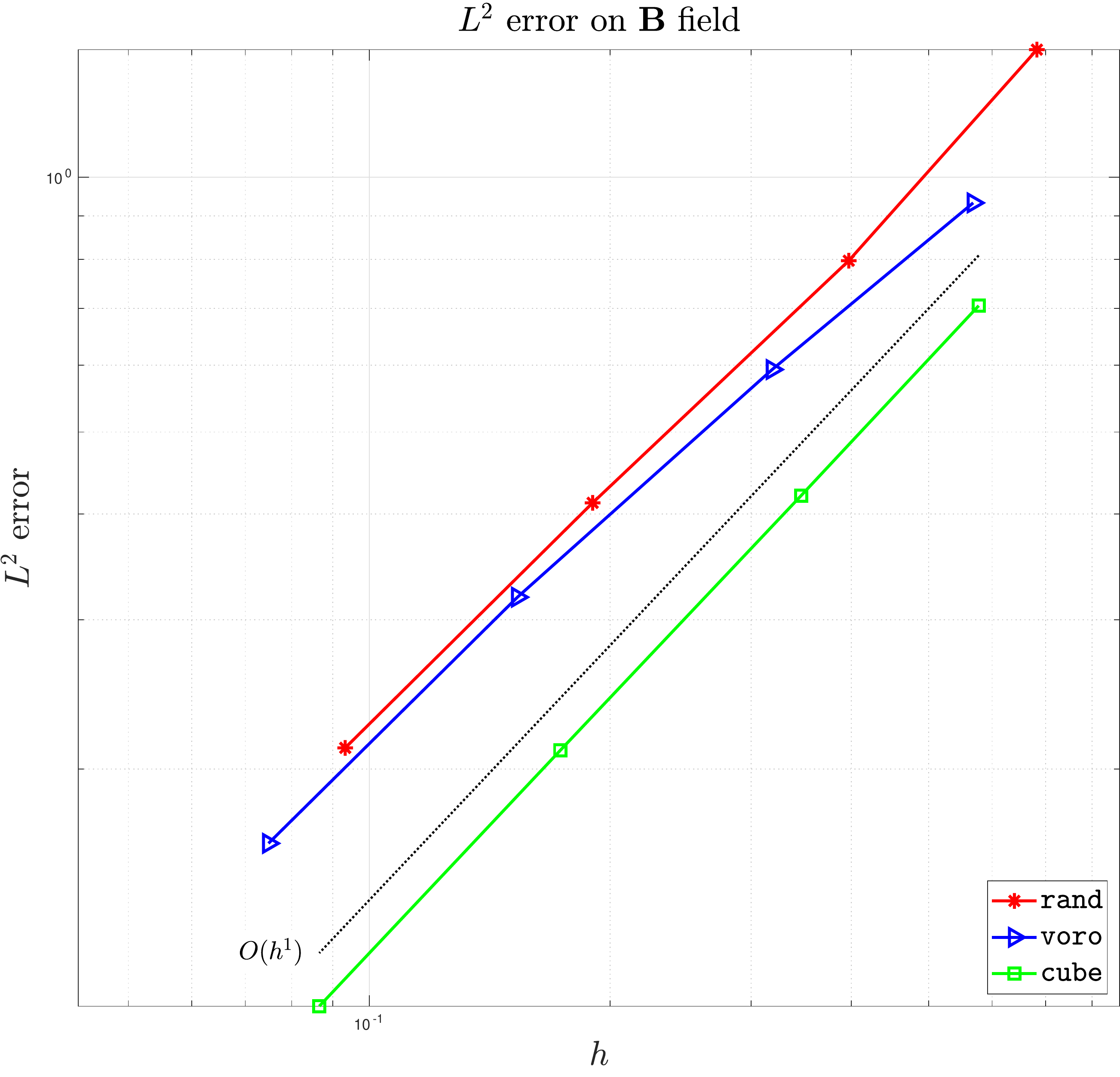} \\
\end{tabular}
\caption{Test Case~1: Error curves in the $L^2$ norm at final time~$T=1$ for the virtual element approximation~$\Ebold_h$ (left panel) and~$\Bbold_h$ (right panel)
using mesh families \texttt{cube}, \texttt{voro}, and \texttt{rand} for simultaneous refinements of $h$ and $\tau$.}
\label{fig:convEBTimeAndSpace1}
\end{figure}

In Tables~\ref{tab:randEConvIE1} and~\ref{tab:randBConvIE1}
we report the approximation errors for \texttt{rand} meshes only since we observe similar behavior of both \texttt{cube} and \texttt{voro} families.

Each column of the tables shows how the method converges with respect to space discretization, i.e., by using a constant time step and refining the mesh.
Likewise, each row of the tables shows how the method converges with respect to the time discretization, i.e., by halving the time step on a fixed mesh.
The error in space seems to be the dominant effect so it hides the convergence in time.
Indeed, the errors does not halve along rows while they do halve along colums.
However, the errors along the diagonal show how VEM behaves when we simultaneously refine the numerical calculations in space and time.

\begin{table}[!htb]
\centering
\begin{tabular}{|r|c|c|c|c|c|}
    \hline
    \multicolumn{1}{|c|}{$h/\tau$}&1 &1/2 &1/4 &1/8 &1/16\\
    \hline
    \texttt{rand27}   & 7.59493e-01 $\:$& 6.87305e-01 $\:$& 6.61804e-01 $\:$& 6.53871e-01 $\:$& 6.51503e-01 $\:$\\ 
    \texttt{rand125}  & 6.75411e-01 $\:$& 5.28227e-01 $\:$& 4.65715e-01 $\:$& 4.42600e-01 $\:$& 4.34238e-01 $\:$\\ 
    \texttt{rand1000} & 5.57938e-01 $\:$& 3.71547e-01 $\:$& 2.84666e-01 $\:$& 2.52453e-01 $\:$& 2.42061e-01 $\:$\\ 
    \texttt{rand8000} & 5.15998e-01 $\:$& 3.05485e-01 $\:$& 1.93696e-01 $\:$& 1.45600e-01 $\:$& 1.29105e-01 $\:$\\ 
    \hline
\end{tabular}
\caption{Test Case~1: $L^2$-norms at final time~$T=1$ of the error for the virtual element approximation $\Ebold_h$ using mesh family~\texttt{rand} for various combinations of $h$ and $\tau$.}
\label{tab:randEConvIE1}
\end{table}
\begin{table}[!htb]
  \centering
  \begin{tabular}{|r|c|c|c|c|c|}
    \hline
    \multicolumn{1}{|c|}{$h/\tau$}&1 &1/2 &1/4 &1/8 &1/16\\
    \hline
    \texttt{rand27}   & 1.44376e+00 $\:$& 1.41488e+00 $\:$& 1.40800e+00 $\:$& 1.41438e+00 $\:$& 1.42469e+00 $\:$\\ 
    \texttt{rand125}  & 8.04334e-01 $\:$& 7.98555e-01 $\:$& 7.96968e-01 $\:$& 7.96871e-01 $\:$& 7.96838e-01 $\:$\\ 
    \texttt{rand1000} & 4.17143e-01 $\:$& 4.13694e-01 $\:$& 4.12693e-01 $\:$& 4.12505e-01 $\:$& 4.12469e-01 $\:$\\ 
    \texttt{rand8000} & 2.15819e-01 $\:$& 2.12642e-01 $\:$& 2.11953e-01 $\:$& 2.11841e-01 $\:$& 2.11817e-01 $\:$\\
    \hline
\end{tabular}
\caption{Test Case~1: $L^2$-norms of the error at final time~$T=1$ for the virtual element approximation $\Bbold_h$
using mesh family~\texttt{rand} for various combinations of $h$ and $\tau$.}
\label{tab:randBConvIE1}
\end{table}

Finally, in Table~\ref{tab:voroDivBConvIE1} we report the $L^2$-norm of the divergence of~$\Bbold_h$ for each combination of~$h$ and~$\tau$.
This table confirms that the numerical approximation to the magnetic field provided by the VEM is divergence free.
Indeed, all the values of the divergence are very small even if a slight growth is visible during the refinement process for $h\to0$,
which is very likely due to round-off effects related to the conditioning of the final linear system.

Such interpretation is also supported by the results presented in~\cite{alvarez2020virtual}.
Here, it was noted that the $L^2$-norm of the $\div \Bbold_h$ may be affected by the residual threshold at which the iterations of a preconditioned Krilov method are arrested.
More precisely, the authors of~\cite{alvarez2020virtual} noted that the higher this threshold is, the bigger the $L^2$ norm of $\div\Bbold_h$ is.
Consequently, we can infer that the divergence-free property of~$\Bbold_h$ is related to \emph{how well} the linear system is solved and we claim that
this effect on the $L^2$-norm of $\div\Bbold_h$ is probably due a possible growth of the condition number of the final linear system.
We use the direct solver PARDISO~\cite{pardiso}.
Thus, the divergence free condition is not affected by any parameters of the solver;
rather, it is related \emph{only} to the round-off error.
\begin{table}[!htb]
  \centering
  \begin{tabular}{|r|c|c|c|c|c|}
    \hline
    \multicolumn{1}{|c|}{$h/\tau$}&1 &1/2 &1/4 &1/8 &1/16\\
    \hline
    \texttt{rand27}   & 5.57005e-14 $\:$& 2.33412e-14 $\:$& 1.43632e-14 $\:$& 1.31831e-14 $\:$& 1.01836e-14 $\:$\\ 
    \texttt{rand125}  & 4.47399e-13 $\:$& 3.14928e-13 $\:$& 1.27115e-13 $\:$& 1.26818e-13 $\:$& 1.07065e-13 $\:$\\ 
    \texttt{rand1000} & 5.41757e-12 $\:$& 2.49285e-12 $\:$& 1.74853e-12 $\:$& 1.17031e-12 $\:$& 9.19428e-13 $\:$\\ 
    \texttt{rand8000} & 7.24382e-10 $\:$& 3.80017e-10 $\:$& 6.07471e-10 $\:$& 2.25296e-10 $\:$& 3.78838e-09 $\:$\\ 
    \hline
  \end{tabular}
\caption{Test Case~1: $L^2$-norm of $\div \Bbold_h$ at final time~$T=1$ using mesh family~\texttt{rand} for various combinations of $h$ and $\tau$.}
\label{tab:voroDivBConvIE1}
\end{table}

\subsection{Test case 2: polarized fields with variable coefficients}
We solve Maxwell's equations on the computational domain~$\Omega=(0,\,1)^3$ for~$t\in[0,1]$ with the variable coefficients
\begin{equation}
\mu(x,\,y,\,z)      := \frac{1}{1+x^2+y^2+z^2},\quad \varepsilon(x,\,y,\,z) := 2-x^2-z, \quad  \sigma(x,\,y,\,z)  := 2-y^2+z. \label{eqn:coeff}
\end{equation}
The boundary conditions and the current density vector~$\Jbold$ are defined in accordance with~\eqref{eqn:coeff} and the exact solution fields
\[
\Ebold(\xbold,\,t):=\left(\begin{array}{c} 0 \\ 0 \\ \sin(\pi\,x)\,\sin(\pi\,y) \end{array}\right)\cos(2.2\,\pi\,t),
\quad \quad \Bbold(\xbold,\,t):=   \left(\begin{array}{c} -\cos(\pi\,y)\,\sin(\pi\,x) \\ \phantom{-}\cos(\pi\,x)\,\sin(\pi\,y)\\ 0   \end{array}\right)\sin(2.2\,\pi\,t)/2.2  .
\]
The electromagnetic fields~$\Ebold$ and~$\Bbold$ are orthogonal at any point in~$\Omega$ and time in~$[0,1]$.
Consequently, this solution simulates a polarized stationary electromagnetic wave with a polarization direction that is parallel to $\Ebold\times\Bbold$.
We underline that this second test case is more complex than the previous one since the coefficients $\mu$, $\epsilon$ and $\sigma$ are all variables in space.

In~Figure~\ref{fig:convEBTimeAndSpace2},
we plot the $L^2$ errors at final time~$T= 1$ for simultaneous refinements of~$h$ and~$\tau$ on the three mesh families and observe the expected convergence rate,
which we recall is expected to be proportional to $h+\tau$; see Theorem~\ref{theorem:fully-discrete}.
Moreover, in Figure~\ref{fig:convEBDifferentTime} we report the convergence rate at time $T=0.25, 0.50$ and 0.75.
Also in this case the behaviour of the error is the one predicted by Theorem~\ref{theorem:fully-discrete}. 
We show such convergence lines only for \texttt{rand} meshes as the results for the other type of meshes are similar.

\begin{figure}[!htb]
  \centering
  \begin{tabular}{cc}
    \includegraphics[width=0.45\textwidth]{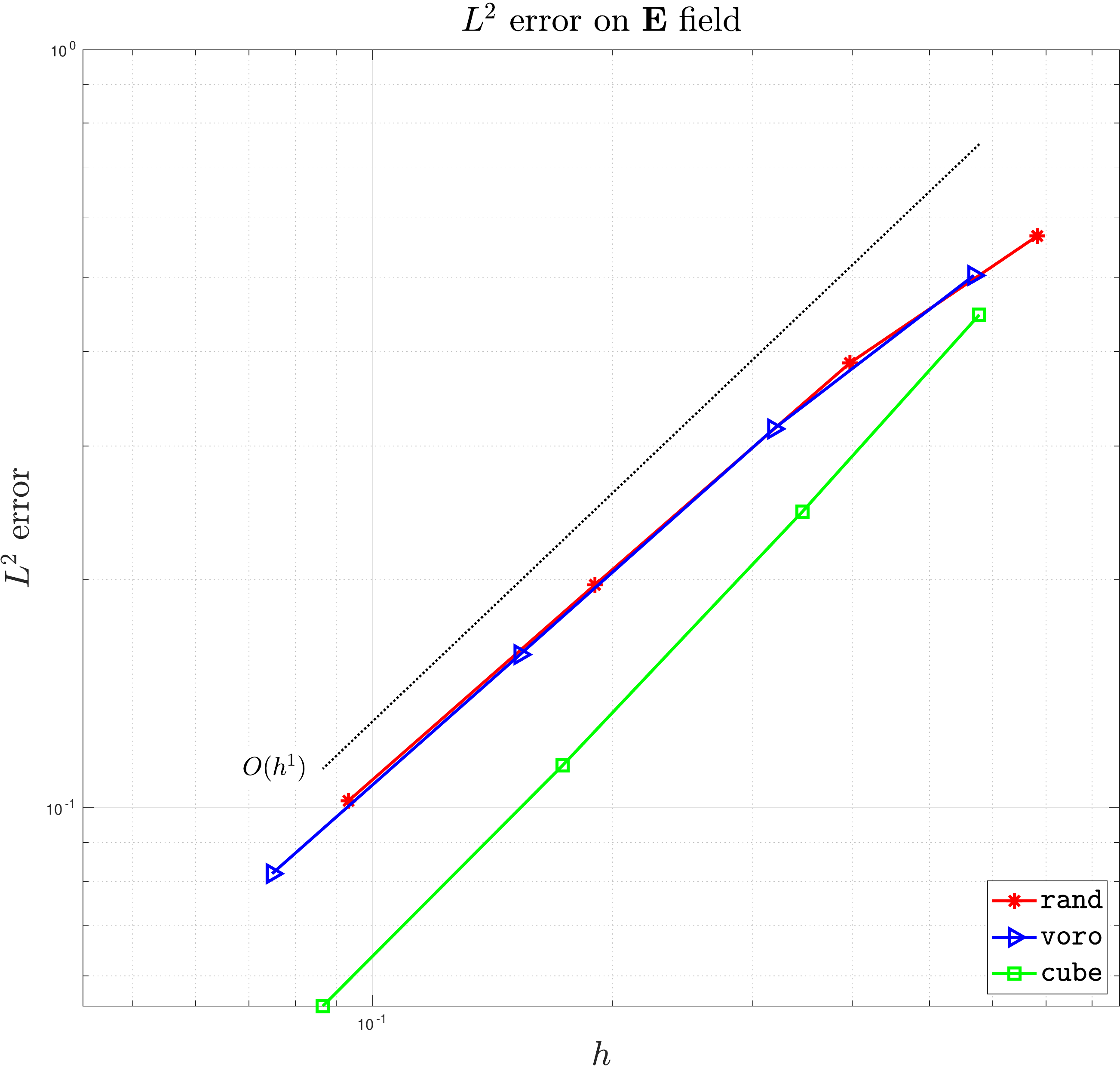} &
    \includegraphics[width=0.45\textwidth]{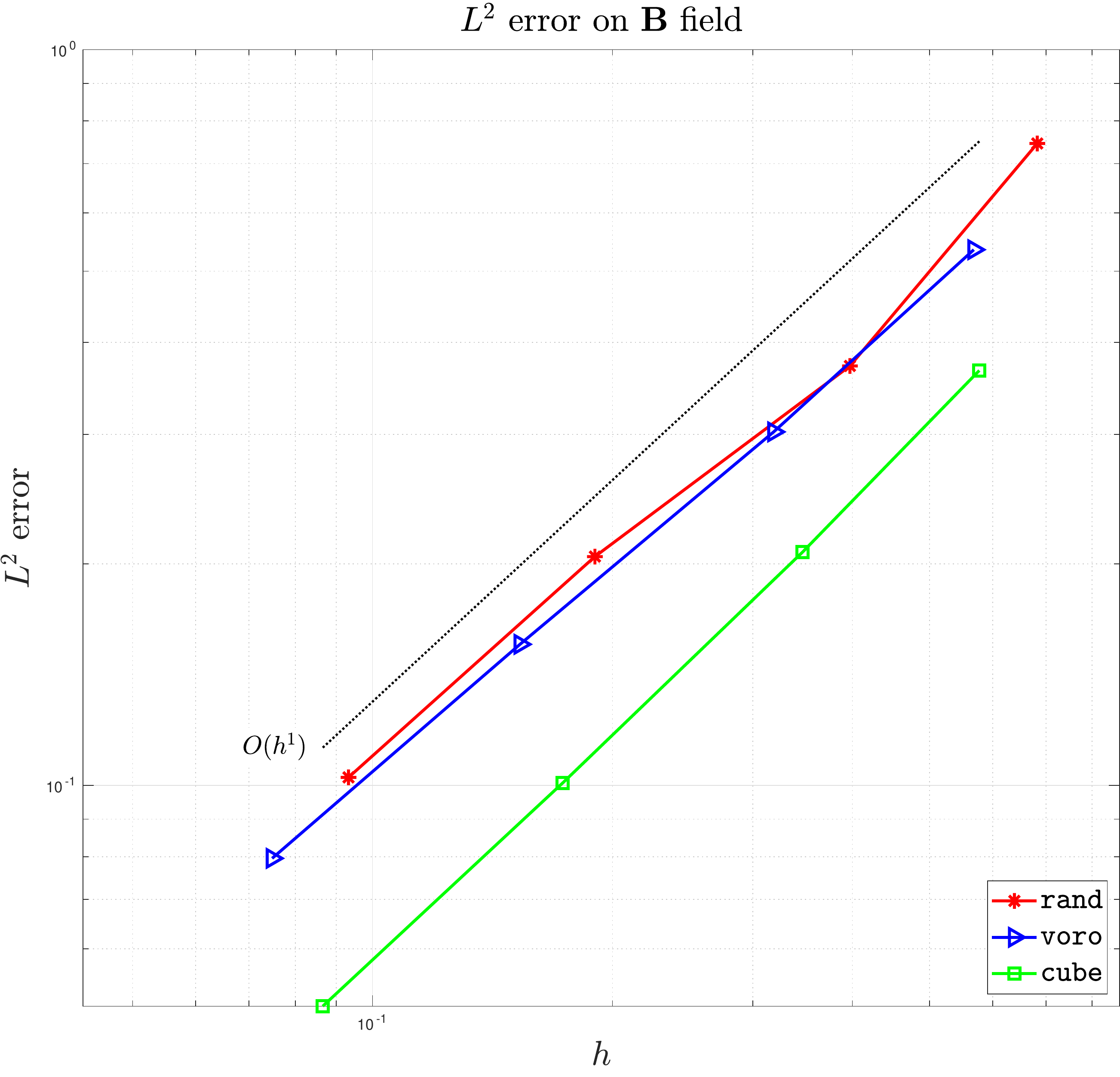} \\
  \end{tabular}
\caption{Test Case~2: error curves in the $L^2$-norm at final time~$T=1$ for the virtual  element approximation $\Ebold_h$ (left panel) and $\Bbold_h$ (right panel)
using mesh families \texttt{cube}, \texttt{voro}, and \texttt{rand} for simultaneous refinement of~$h$ and~$\tau$.}
\label{fig:convEBTimeAndSpace2}
\end{figure}

\begin{figure}[!htb]
  \centering
  \begin{tabular}{cc}
    \includegraphics[width=0.45\textwidth]{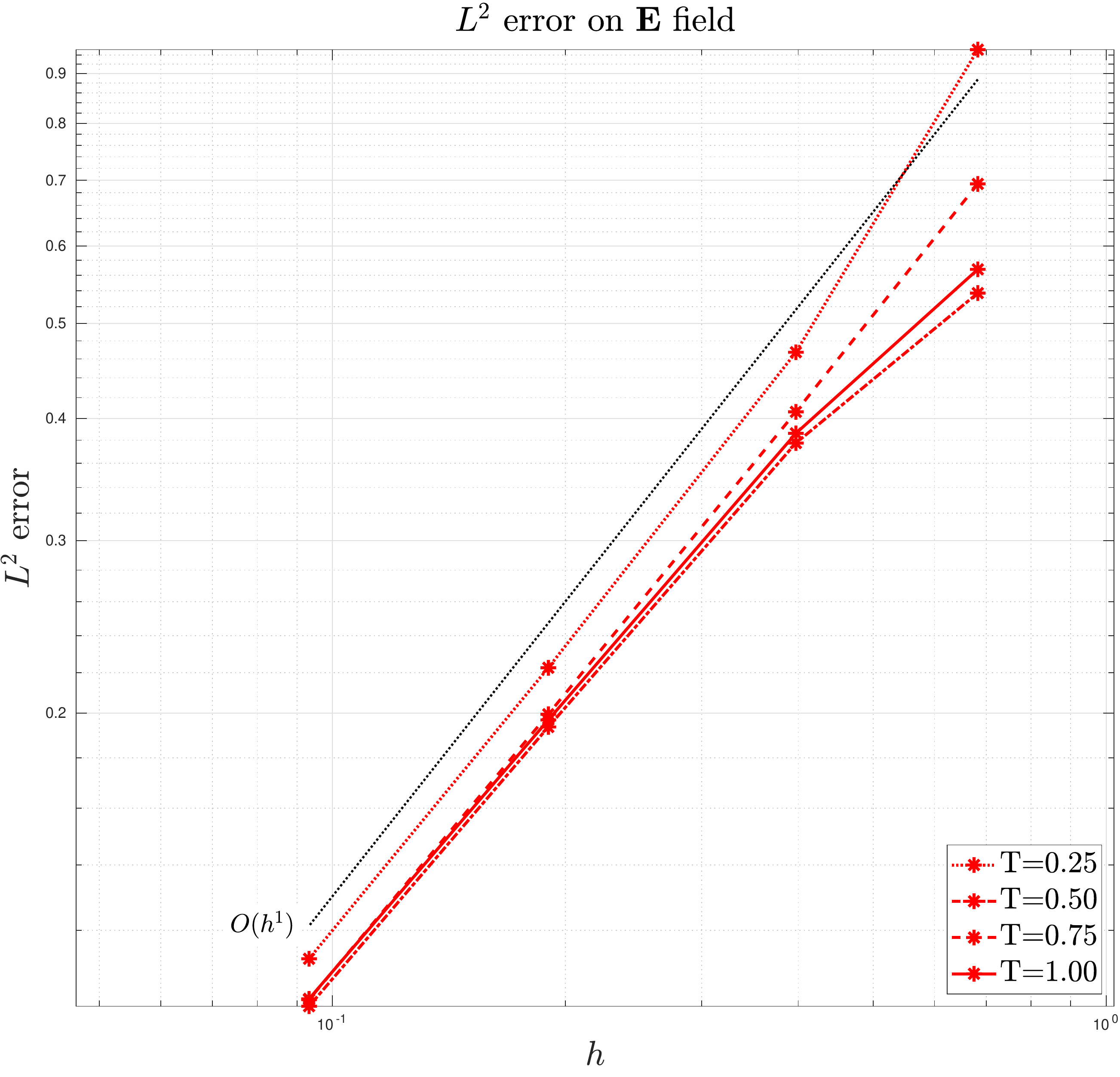} &
    \includegraphics[width=0.45\textwidth]{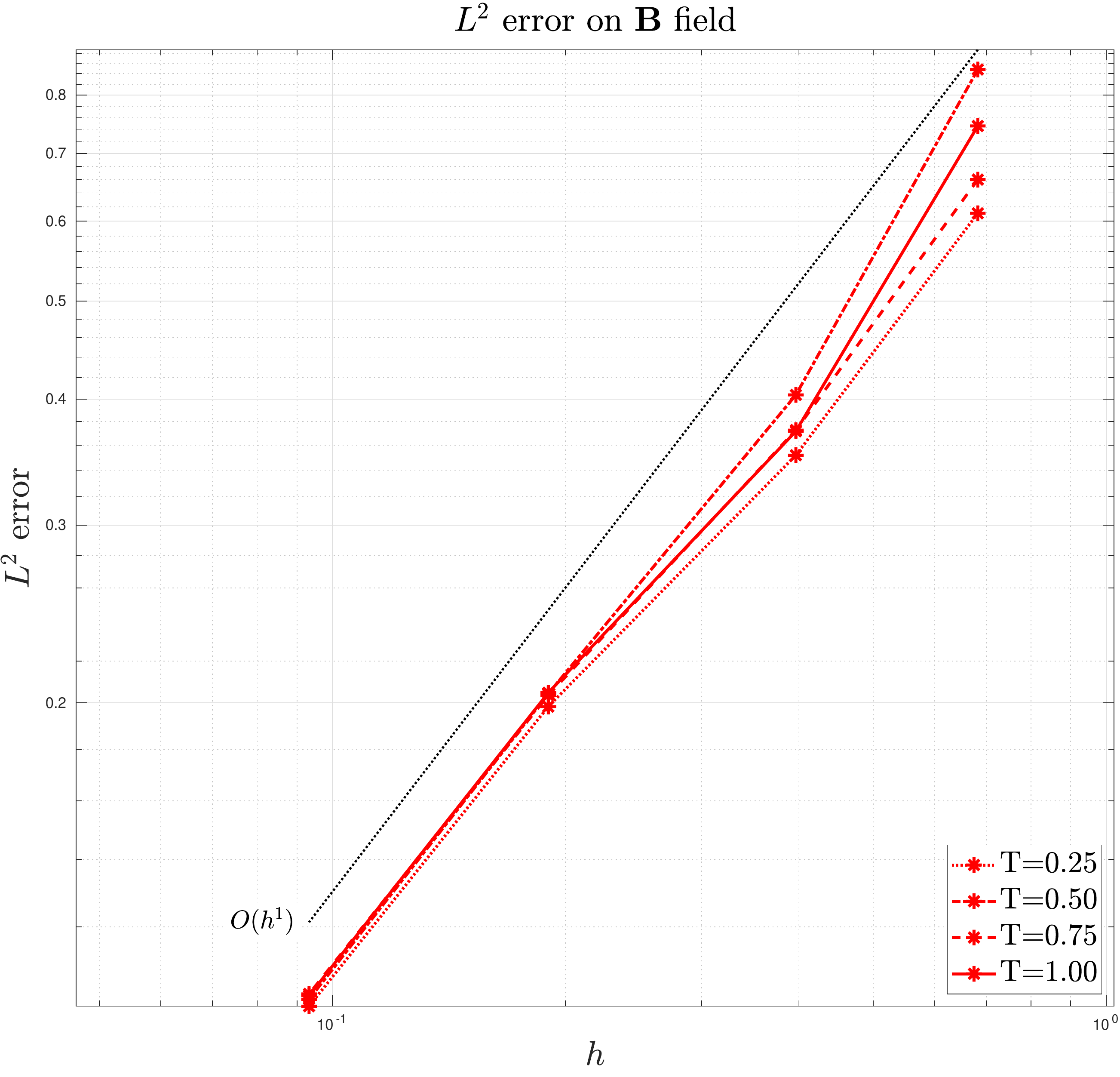} \\
  \end{tabular}
\caption{Test Case~2: error curves in the $L^2$-norm at times~$T=0.25, 0.50, 0.75$ and $1.00$ for the virtual element approximation $\Ebold_h$ (left panel) and $\Bbold_h$ (right panel) using the mesh family \texttt{rand} for simultaneous refinement of~$h$ and~$\tau$.}
\label{fig:convEBDifferentTime}
\end{figure}

As in Test Case 1,
we observe similar convergence behavior of the proposed VEM scheme on each mesh families so
in Tables~\ref{tab:voroEConvIE2} and~\ref{tab:voroEConvIB2},
we report the approximation errors measured in the $L^2$ norms only for \texttt{voro} meshes and we omit the results for the other two mesh families.

Despite the increased complexity due to variable coefficients, 
we observe the optimal convergence behavior of the error also in this example.
Indeed, each column shows the convergence with respect to the space discretization, 
each row shows the convergence with respect to the time discretization and the diagonal shows the convergence when we refine simultaneously in space and time.
As before, the error of the space discretization appears to dominate the error of the time discretization. Thus, the convergence in time along the rows, which should be proportional to $\tau$, is not clearly visible.

\begin{table}[!htb]
  \centering
\begin{tabular}{|r|c|c|c|c|c|c|c|}
    \hline
    \multicolumn{1}{|c|}{$h/\tau$}&1/8 &1/16 &1/32 &1/64 &1/128 &1/256 &1/512\\
    \hline
    \texttt{voro27}    & 8.64460e-01 $\:$& 6.85496e-01 $\:$& 5.61864e-01 $\:$& 5.03712e-01 $\:$& 4.81700e-01 $\:$& 4.73853e-01 $\:$& 4.70936e-01 $\:$\\ 
    \texttt{voro125}   & 8.55032e-01 $\:$& 6.30865e-01 $\:$& 4.51896e-01 $\:$& 3.55114e-01 $\:$& 3.16074e-01 $\:$& 3.02671e-01 $\:$& 2.98186e-01 $\:$\\ 
    \texttt{voro1000}  & 8.44007e-01 $\:$& 5.92062e-01 $\:$& 3.76326e-01 $\:$& 2.43624e-01 $\:$& 1.81720e-01 $\:$& 1.59270e-01 $\:$& 1.52316e-01 $\:$\\ 
    \texttt{voro8000}  & 8.40933e-01 $\:$& 5.81424e-01 $\:$& 3.54326e-01 $\:$& 2.05935e-01 $\:$& 1.27080e-01 $\:$& 9.33219e-02 $\:$& 8.18718e-02 $\:$\\ 
    \hline
\end{tabular}
\caption{Test Case~2: $L^2$-norms of the error at final time~$T=1$ for the virtual element approximation $\Ebold_h$ using mesh family~\texttt{voro} for various combinations of~$h$ and~$\tau$.}
\label{tab:voroEConvIE2}
\end{table}

\begin{table}[!htb]
  \centering
  \begin{tabular}{|r|c|c|c|c|c|c|c|}
    \hline
    \multicolumn{1}{|c|}{$h/\tau$}&1/8 &1/16 &1/32 &1/64 &1/128 &1/256 &1/512\\
    \hline
    \texttt{voro27}    & 5.92004e-01 $\:$& 5.69835e-01 $\:$& 5.43713e-01 $\:$& 5.34765e-01 $\:$& 5.37512e-01 $\:$& 5.42274e-01 $\:$& 5.45764e-01 $\:$\\ 
    \texttt{voro125}   & 4.68226e-01 $\:$& 4.28029e-01 $\:$& 3.69947e-01 $\:$& 3.23317e-01 $\:$& 3.02412e-01 $\:$& 2.95933e-01 $\:$& 2.94413e-01 $\:$\\ 
    \texttt{voro1000}  & 4.06835e-01 $\:$& 3.58563e-01 $\:$& 2.82887e-01 $\:$& 2.10580e-01 $\:$& 1.70641e-01 $\:$& 1.55564e-01 $\:$& 1.51081e-01 $\:$\\ 
    \texttt{voro8000}  & 3.90261e-01 $\:$& 3.38876e-01 $\:$& 2.55327e-01 $\:$& 1.69054e-01 $\:$& 1.13551e-01 $\:$& 8.82538e-02 $\:$& 7.96198e-02 $\:$\\
    \hline
  \end{tabular}
\caption{Test Case~2: $L^2$-norms of the error at final time~$T=1$ for the virtual element approximation $\Bbold_h$ using mesh family~\texttt{voro} for various combinations of $h$ and~$\tau$.}
\label{tab:voroEConvIB2}
\end{table}

Finally, Table~\ref{tab:voroDivBConvIE2} shows the values of the $L^2$-norm of $\div \Bbold_h$:
the VEM does preserve the solenoidal property of the magnetic induction, i.e., the discrete field~$\Bbold_h$ has a pointwise zero divergence up to machine precision.
If we compare the results in Tables~\ref{tab:voroDivBConvIE1} and Table~\ref{tab:voroDivBConvIE2}, then we note that the latters are smaller.
This is a further numerical evidence of the fact that the divergence-free property is affected by the  condition number of the resulting linear system.
Indeed, \texttt{voro} meshes are more shape-regular with respect to \texttt{rand} ones
so the condition numbers of matrices associated with them are smaller than 
those associated with \texttt{rand} meshes: 
the algebraic errors are smaller and we get a smaller divergence.

\begin{table}[!htb]
  \centering
  \begin{tabular}{|r|c|c|c|c|c|c|c|}
    \hline
    \multicolumn{1}{|c|}{$h/\tau$}&1/8 &1/16 &1/32 &1/64 &1/128 &1/256 &1/512\\
    \hline
    \texttt{voro27}    & 1.18487e-13 $\:$& 6.65328e-14 $\:$& 7.74812e-14 $\:$& 9.42188e-14 $\:$& 1.02437e-13 $\:$& 9.95126e-14 $\:$& 1.04040e-13 $\:$\\ 
    \texttt{voro125}   & 1.79095e-15 $\:$& 4.39464e-15 $\:$& 2.87788e-15 $\:$& 4.01614e-15 $\:$& 6.12453e-15 $\:$& 8.84695e-15 $\:$& 1.02443e-14 $\:$\\ 
    \texttt{voro1000}  & 1.98678e-14 $\:$& 3.86910e-14 $\:$& 3.81884e-14 $\:$& 3.34432e-14 $\:$& 3.23360e-14 $\:$& 2.54928e-14 $\:$& 2.88468e-14 $\:$\\ 
    \texttt{voro8000}  & 1.75878e-13 $\:$& 5.95562e-13 $\:$& 2.50332e-13 $\:$& 2.40717e-13 $\:$& 1.63504e-13 $\:$& 1.48763e-13 $\:$& 9.27908e-14 $\:$\\ 
    \hline
  \end{tabular}
\caption{Test Case~2: $L^2$ norm of $\div \Bbold_h$ at final time~$T=1$ using mesh family~\texttt{voro} for various combinations of $h$ and~$\tau$.}
\label{tab:voroDivBConvIE2}
\end{table}

%%%%%%%%%%%%%%%%%%%%%%%%%%%%%%%%%%%%%%%%%%%%%%%%%%%%%%%%%%%%%%%%%%%%%%%%%%%
\section{Conclusions}
\label{section:conclusions}
%%%%%%%%%%%%%%%%%%%%%%%%%%%%%%%%%%%%%%%%%%%%%%%%%%%%%%%%%%%%%%%%%%%%%%%%%%%

In this paper, we have considered a low order virtual element approximation of Maxwell's equations
based on a De Rahm sequence.
After developing some interpolation and stability properties of edge and face spaces,
we showed optimal a priori estimates for both the semi- and the fully- discrete schemes
and corroborated the theoretical predictions with numerical experiments.
Future works may cover the approximation of corner singularities and the virtual element approximation of MHD problems.
The extension to high order methods requires high order interpolation estimates and stability properties of edge and face VEM spaces, which is currently a work in progress.

\section*{Acknowledgments}
L.~B. da V. and F.~D. are partially supported by the European Research Council through the H2020 Consolidator Grant (Grant No. 681162) CAVE - Challenges and Advancements in Virtual Elements. 
L.~B. da V. is also partially supported by the MIUR through the PRIN grant n. 201744KLJL.
G.~M. has partially been supported by the ERC Project CHANGE, which has received funding from the European Research Council (ERC) under the European Union Horizon 2020 research and innovation program (grant agreement no.~694515).
L.~M. acknowledges support from the Austrian Science Fund (FWF) project P33477.

We further wish to thank Martin Costabel for an advice regarding a regularity result.

% ----------------------------
% Bibliography
% ----------------------------

\bibliography{bibliogr}
\bibliographystyle{plain}

% ----------------------------
% Appendices
% ----------------------------

%% 
\appendix

\end{document}